\numberwithin{equation}{section}
\newcommand{\Z}{\mathbb{Z}}
\newcommand{\N}{\mathbb{N}}
\newcommand{\G}{\mathcal{G}}
\newcommand{\T}{\mathbb{T}}
\newcommand{\K}{\mathcal{K}}
\newcommand{\R}{\mathcal{R}}
\newcommand{\Oo}{\mathcal{O}}
\newcommand{\D}{\mathcal{D}}
\newcommand{\id}{{\operatorname{Id}}}
\newcommand{\KP}{\operatorname{KP}}
\newcommand{\X}{\overline{X}}
\newcommand{\tsh}{\overline{\sigma}}
\newcommand{\orb}{\operatorname{orb}}
\newcommand{\bps}{\operatorname{\partial \Lambda}}
\newcommand{\Lone}{\operatorname{\Lambda_1}}
\newcommand{\Ltwo}{\operatorname{\Lambda_2}}
\newcommand{\Per}{\operatorname{Per}}
\newcommand{\IP}{\operatorname{IP}}
\newcommand{\iso}{\operatorname{Iso}}
\newcommand{\clsp}{\overline{\operatorname{span}}}
\newtheorem{theorem}{Theorem}[section]
\newtheorem{proposition}[theorem]{Proposition}
\newtheorem{corollary}[theorem]{Corollary}
\newtheorem{lemma}[theorem]{Lemma}
\theoremstyle{definition}
\newtheorem{definition}[theorem]{Definition}
\newtheorem{remark}[theorem]{Remark}
\newtheorem{exm}[theorem]{Example}
\author{Toke Meier Carlsen}
\address{Department of Science and Technology\\University of the Faroe Islands\\
Vestara Bryggja 15\\ FO-100 T\'orshavn\\ Faroe Islands}
\email{toke.carlsen@gmail.com}
\author{James Rout}
\address{School of Mathematics and Statistics\\ University of New South Wales\\
Kensington \\ NSW 2052\\ Australia}
\email{jdr749@uowmail.edu.au}
\date{\today}
\subjclass[2010]{Primary: 46L55; Secondary: 37A55, 37B10, 16S99, 22A22}
\keywords{Higher-rank graph; orbit equivalence; conjugacy; multi-dimensional shift space; groupoid; $k$-graph $C^*$-algebra; Kumjian--Pask algebra}
\title{Orbit equivalence of higher-rank graphs}
\begin{document}

\begin{abstract}
We study the notions of continuous orbit equivalence and eventual one-sided conjugacy of finitely-aligned higher-rank graphs and two-sided conjugacy of row-finite higher-rank graphs with finitely many vertices and no sinks or sources. We show that there is a continuous orbit equivalence between two finitely-aligned higher-rank graphs that preserves the periodicity of boundary paths if and only if the boundary path groupoids are isomorphic, and characterise continuous orbit equivalence, eventual one-sided conjugacy, and two-sided conjugacy of higher-rank graphs in terms of the $C^*$-algebras and the Kumjian--Pask algebras of the higher-rank graphs.
\end{abstract}

\maketitle

\section{Introduction}

Matsumoto introduced and studied continuous orbit equivalence for irreducible one-sided shifts of finite type in \cite{Mat10}. He showed that one-sided shifts $X_A$ and $X_B$ are continuously orbit equivalent for finite irreducible $\{0,1\}$-matrices $A$ and $B$ that are not permutations if and only if there is an isomorphism between the corresponding Cuntz--Krieger algebras $\Oo_A$ and $\Oo_B$ that preserves the diagonal subalgebra. Brownlowe, Carlsen and Whittaker extended continuous orbit equivalence to boundary path spaces of arbitrary directed graphs in \cite{BCW}. They used groupoid techniques to show that if arbitrary directed graphs $E$ and $F$ are continuously orbit equivalent, then there is a diagonal-preserving isomorphism between the corresponding graph $C^*$-algebras $C^*(E)$ and $C^*(F)$. Moreover, they showed that the converse holds provided that the graphs both satisfy condition (L). 

Arklint, Eilers and Ruiz in \cite{AER} and Carlsen and Winger in \cite{CW}, independently and using different methods, showed that arbitrary graphs $E$ and $F$ are continuously orbit equivalent by an equivalence that maps eventually periodic points to eventually periodic points if and only if there is a diagonal-preserving isomorphism between $C^*(E)$ and $C^*(F)$. Continuous orbit equivalence has also been characterised in terms of diagonal-preserving isomorphisms of Leavitt path algebras. This follows from the work of Brown, Clark and an Huef for row-finite graphs \cite{BCH} and Carlsen and Rout for arbitrary graphs \cite{CR2}.

Matsumoto proved in \cite{Mat17} that if $A$ and $B$ are finite irreducible $\{0,1\}$-matrices that are not permutations matrices, then the one-sided shifts of finite type $X_A$ and $X_B$ are eventually conjugate if and only if there is an isomorphism between the Cuntz--Krieger algebras $\Oo_A$ and $\Oo_B$ that preserves the diagonal subalgebra and intertwines the gauge actions of $\Oo_A$ and $\Oo_B$. This result was generalised by Carlsen and Rout to arbitrary graphs in \cite{CR1}, and an analogous result for Leavitt path algebras was proved in \cite{CR2}.

In \cite{CR1} and \cite{CR2}, diagonal-preserving stable isomorphisms of graph graph $C^*$-algebras and Leavitt path algebras was characterised in terms of stable isomorphisms of the corresponding boundary path groupoids and it was shown that the two-sided shifts of finite type of two finite graphs with no sinks and no sources are conjugate if and only if there is a diagonal-preserving gauge-invariant isomorphism between the stabilised graph $C^*$-algebras, and if and only if there is a diagonal-preserving graded isomorphism between the stabilised Leavitt path algebras.

Higher-rank graphs or $k$-graphs are combinatorial objects that generalise directed graphs. These were introduced by Kumjian and Pask in \cite{KP} in order to model the higher-rank Cuntz--Krieger algebras of Robertson and Steger \cite{RS}. Kumjian and Pask originally associated a $C^*$-algebra $C^*(\Lambda)$ to a row-finite $k$-graph $\Lambda$ without sources. Raeburn, Sims and Yeend then associated $C^*$-algebras to larger classes of $k$-graphs in \cite{RSY1} and \cite{RSY}. The largest class is that of finitely-aligned $k$-graphs. Farthing, Muhly and Yeend showed that the $C^*$-algebra of a finitely-aligned $k$-graph can also be realised as the groupoid $C^*$-algebra of the boundary path groupoid $\G_\Lambda$ in \cite{FMY}. The algebraic analogues of $k$-graph $C^*$-algebras are the Kumjian--Pask algebras $\KP(\Lambda)$. These were introduced for row-finite $k$-graphs $\Lambda$ without sources by Aranda Pino, Clark, an Huef and Raeburn in \cite{APBaHR}. Clark and Pangalela recently associated Kumjian--Pask algebras to finitely-aligned $k$-graphs in \cite{CP}. They showed that $\KP(\Lambda)$ can also be realised as the Steinberg algebra of the boundary path groupoid $\G_\Lambda$.

\subsection*{The results of this paper}
In this paper, we use groupoid methods to study continuous orbit equivalence of finitely-aligned $k$-graphs (of possibly different rank) as well as eventually one-sided conjugacy of finitely-aligned $k$-graphs (of the same rank) and two-sided conjugacy of row-finite $k$-graphs (of the same rank) with finitely many vertices and no sinks or sources. 

In Section~\ref{sec:orbit}, we characterise continuous orbit equivalence between a finitely-aligned $k_1$-graph $\Lone$ and a finitely-aligned $k_2$-graph $\Ltwo$ in terms of groupoid isomorphisms of the boundary path groupoids $\G_{\Lone}$ and $\G_{\Ltwo}$. To do this, we recall the periodicity group of a boundary path and introduce a period-preserving condition for continuous orbit equivalence. We show that $\Lone$ and $\Ltwo$ are continuously orbit equivalent by an equivalence that is period-preserving if and only if the associated boundary path groupoids $\G_{\Lone}$ and $\G_{\Ltwo}$ are isomorphic (Proposition~\ref{prop:h}). 

We combine this with results about groupoid $C^*$-algebras in \cite{CRST} and Steinberg algebras in \cite{CR2} and \cite{S2} to see that $\Lone$ and $\Ltwo$ are continuously orbit equivalence by an equivalence that is period-preserving if and only if $C^*(\Lone)$ and $C^*(\Ltwo)$ are isomorphic by a diagonal-preserving isomorphism if and only if the Kumjian--Pask algebras $\KP(\Lone)$ and $\KP(\Ltwo)$ are isomorphic by a diagonal-preserving ring-isomorphism (Corollary~\ref{conj:2}). 

We give examples of $k$-graphs of different rank that are continuously orbit equivalent (Example~\ref{exm:coe}), and we give an example of $2$-graphs whose boundary path spaces are homeomorphic but not continuously orbit equivalent (Remark~\ref{rmk:nonexample}).

In Theorem~\ref{thm:eventual}, we show that finitely-aligned $k$-graphs $\Lone$ and $\Ltwo$ are eventually one-sided conjugate if and only if $C^*(\Lone)$ and $C^*(\Ltwo)$ are isomorphic by a diagonal-preserving isomorphism that intertwines the gauge actions if and only if $\KP(\Lone)$ and $\KP(\Ltwo)$ are isomorphic by a diagonal-preserving ring-isomorphism that preserves the $\Z^k$-grading.

We then use the results of \cite{CRS} to characterise diagonal-preserving stable isomorphisms of $k$-graph $C^*$-algebras and Kumjian--Pask algebras in terms of stable isomorphisms of the corresponding boundary path groupoids (Corollary~\ref{cor:kakutani}).

Finally, we show in Theorem~\ref{thm:conjugacy} that the two-sided multi-dimensional shift spaces associated to row-finite $k$-graphs $\Lone$ and $\Ltwo$ with finitely many vertices and no sinks or sources are conjugate if and only if $C^*(\Lone)$ and $C^*(\Ltwo)$ are stably isomorphic by a diagonal-preserving isomorphism that intertwines the gauge actions if and only if $\KP(\Lone)$ and $\KP(\Ltwo)$ are stably isomorphic by a diagonal-preserving isomorphism that preserves the $\Z^k$-grading.

\section{Preliminaries}

For the benefit of the reader, we recall the definitions of higher-rank graphs, their $C^*$-algebras and Kumjian--Pask algebras, their boundary path spaces and boundary path groupoids.

Let $\N$ be the set of positive integers together with $0$, and let $k$ be a positive integer. We regard $\N^k$ as a semigroup with identity $0$. We write $n \in \N^k$ as $n=(n_1, \dots, n_k)$. We use $\le$ for the partial order on $\N^k$ given by $m \le n$ if $m_i \le n_i$ for all $1 \le i \le k$. For $m,n \in \N^k$, we write $n \vee m$ for their coordinate-wise maximum and $n \wedge m$ for their coordinate-wise minimum.

\subsection{Higher-rank graphs and their $C^*$-algebras and Kumjian--Pask algebras}

A \emph{higher-rank graph} or \emph{$k$-graph} is a countable small category $\Lambda:=(\operatorname{obj}(\Lambda), \operatorname{mor}(\Lambda),r,s)$ together with a functor $d: \Lambda \to \mathbb{N}^k$ satisfying the \emph{factorisation property}: for every $\lambda \in \Lambda$ and $m,n \in \mathbb{N}^k$ with $d(\lambda)=m+n$, there are unique elements $\mu,\nu \in \Lambda$ with $d(\mu) = m$ and $d(\nu) = n$ such that $\lambda = \mu \nu$. We then write $\lambda(0,m) := \mu$ and $\lambda(m,m+n)=\nu$.

\begin{exm}[{\cite[Example~2.2.(ii)]{RSY1}}]
For $k \in \N$ and $m \in (\N \cup \{\infty\})^k$, define $\Omega_{k,m} := \{(p,q) \in \N^k \times \N^k: p \le q \le m \}.$ This is a $k$-graph with $\Omega_{k,m}^0=\{p \in \N^k: p \le m\}$, $r(p,q)=p$, $s(p,q)=q$, $d(p,q)=q-p$ and composition defined by $(p,q)(q,r)=(p,r)$. We write $\Omega_{k,\infty}$ for the $k$-graph $\Omega_{k,(\infty, \dots, \infty)}$ introduced in \cite[Example~1.7~(ii)]{KP}.
\end{exm}

For $n \in \N^k$, we define $\Lambda^n := \{\lambda \in \Lambda: d(\lambda)=n\}$. We regard the elements of $\Lambda^n$ as \emph{paths of length} $n$, and the elements of $\Lambda^0$ as \emph{vertices}. The uniqueness of the factorisation property allows us to identify $\operatorname{obj}(\Lambda)$ with $\Lambda^0$. For $\lambda \in \Lambda$, we refer to $r(\lambda)$ as the \emph{range} of $\lambda$ and $s(\lambda)$ as the \emph{source} of $\lambda$. For $v \in \Lambda^0$ and $E \subseteq \Lambda$, we define $v E := \{\lambda \in E: r(\lambda)=v \}$ and $E v := \{\lambda \in E: s(\lambda)=v\}$, and we define $\lambda E := \{ \lambda \mu: \mu \in s(\lambda) E \}$ and $E \lambda := \{\mu \lambda: \mu \in E r(\lambda)\}$. A vertex $v \in \Lambda^0$ is called a \emph{source} if $v \Lambda^m = \emptyset$ for some $m \in \N^k$, and called a \emph{sink} if $\Lambda^m v = \emptyset$ for some $m \in \N^k$. A $k$-graph $\Lambda$ is called \emph{row-finite} if for each $m\in \N^k$ and $v \in \Lambda^0$ the set $v \Lambda^m$ is finite.

For $\lambda, \mu \in \Lambda$, we say that $\tau \in \Lambda$ is a \emph{minimal common extension} of $\lambda$ and $\mu$ if $d(\tau) = d(\lambda) \vee d(\mu)$ and $\tau(0,d(\lambda)) = \lambda$ and $\tau(0,d(\mu))=\mu$. We write $\operatorname{MCE}(\lambda,\mu)$ for the collection of all minimal common extensions of $\lambda$ and $\mu$. We then define $\Lambda^{\operatorname{min}}(\lambda,\mu) := \{(\rho,\tau) \in \Lambda \times \Lambda: \lambda \rho = \mu \tau \in \operatorname{MCE}(\lambda,\mu) \}$. A $k$-graph $\Lambda$ is called \emph{finitely-aligned} if for all $\lambda, \mu \in \Lambda$, the set $\Lambda^{\operatorname{min}}(\lambda,\mu)$ is finite (possibly empty). A set $E \subseteq v \Lambda$ is said to be \emph{exhaustive} if for every $\lambda \in v \Lambda$, there exists $\mu \in E$ such that $\Lambda^{\operatorname{min}}(\lambda,\mu) \not = \emptyset$. 

We recall the $C^*$-algebra associated to a finitely-aligned $k$-graph introduced by Raeburn, Sims and Yeend in \cite{RSY}.

\begin{definition}[{{\cite[Definition~2.5]{RSY}}}]
Let $\Lambda$ be a finitely-aligned $k$-graph. A \emph{Cuntz--Krieger} $\Lambda$-family is a collection $\{S_\lambda : \lambda \in \Lambda \}$ of partial isometries in a $C^*$-algebra $A$ satisfying
\begin{enumerate}
\item $\{S_v: v \in \Lambda^0 \}$ is a collection of mutually orthogonal projections;
\item $S_\lambda S_\mu = S_{\lambda \mu}$ whenever $s(\lambda)=r(\mu)$;
\item $S_\lambda^* S_\mu = \sum_{(\alpha,\beta) \in \Lambda^{\operatorname{min}}(\lambda,\mu)} S_\alpha S_\beta^*$ for all $\lambda, \mu \in \Lambda$; and
\item $\prod_{\lambda \in E} (S_v - S_\lambda S_\lambda^*) =0$ for all $v \in \Lambda^0$ and finite exhaustive $E \subset v\Lambda$.
\end{enumerate}
\end{definition}

Given a finitely-aligned $k$-graph $\Lambda$, there is a $C^*$-algebra $C^*(\Lambda)$ generated by a Cuntz--Krieger $\Lambda$-family $\{s_\lambda: \lambda \in \Lambda \}$ which is universal in the following sense: for any Cuntz--Kriger $\Lambda$-family $\{S_\lambda: \lambda \in \Lambda \}$ in a $C^*$-algebra $A$, there is a unique homomorphism $\pi_S: C^*(\Lambda) \to A$ such that $\pi_S(s_\lambda)=S_\lambda$ for all $\lambda \in \Lambda$. By \cite[Lemma~2.7]{RSY}, $C^*(\Lambda) = \clsp \{s_\lambda s_\mu^*: \lambda, \mu \in \Lambda, s(\lambda)=s(\mu) \}$. The \emph{diagonal subalgebra} of $C^*(\Lambda)$ is given by $\D(\Lambda) := \clsp \{s_\mu s_\mu^*: \mu \in \Lambda \}$. In this paper, all isomorphisms of $k$-graph $C^*$-algebras are $*$-isomorphisms. For a finitely-aligned $k_1$-graph $\Lone$ and a finitely-aligned $k_2$-graph $\Ltwo$, we say that an isomorphism $\phi: C^*(\Lone) \to C^*(\Ltwo)$ is \emph{diagonal-preserving} if $\phi(\D(\Lone))=\D(\Ltwo)$.  

We recall the Kumjian--Pask algebra associated to a finitely-aligned $k$-graph $\Lambda$ introduced by Clark and Pangalela in \cite{CP}. For each $\lambda \in \Lambda \setminus \Lambda^0$, we define a \emph{ghost path} by a formal symbol $\lambda^*$. For $v \in \Lambda^0$, we define $v^* := v$, and extend $r$ and $s$ to the collection of ghost paths by setting $r(\lambda^*) := s(\lambda)$ and $s(\lambda) := r(\lambda^*)$. Composition of ghost paths is defined by $\lambda^* \mu^* := (\mu \lambda)^*$. The factorisation property of $\Lambda$ induces a similar factorisation property on the collection of ghost paths.

\begin{definition}[{{\cite[Definition~3.1]{CP}}}]
Let $\Lambda$ be a finitely-aligned $k$-graph, and $R$ a commutative ring with unit. A \emph{Kumjian--Pask} $\Lambda$-family $\{S_\lambda, S_{\mu^*}: \lambda, \mu \in \Lambda\}$ in an $R$-algebra $A$ consists of a function $S: \Lambda \cup \{\mu^*: \mu \in \Lambda \setminus \Lambda^0 \} \to A$ satisfying
\begin{enumerate}
\item $\{S_v: v \in \Lambda^0 \}$ is a collection of mutually orthogonal idempotents;
\item $S_\lambda S_\mu = S_{\lambda \mu}$ and $S_{\mu^*} S_{\lambda^*} = S_{(\lambda \mu)^*}$ whenever $s(\lambda)=r(\mu)$;
\item $S_{\lambda^*} S_\mu = \sum_{(\alpha,\beta) \in \Lambda^{\operatorname{min}}(\lambda,\mu)} S_\alpha S_{\beta^*}$ for all $\lambda, \mu \in \Lambda$; and
\item $\prod_{\lambda \in E} (S_v - S_\lambda S_{\lambda^*}) =0$ for all $v \in \Lambda^0$ and finite exhaustive $E \subset v\Lambda$.
\end{enumerate}
\end{definition}

Given a finitely-aligned $k$-graph and a commutative ring $R$ with unit, there is an $R$-algebra $\KP_R(\Lambda)$ generated by a Kumjian--Pask $\Lambda$-family $\{s_\lambda, s_{\mu^*}: \lambda,\mu \in \Lambda \}$ which is universal in the following sense: for any Kumjian--Pask $\Lambda$-family $\{S_\lambda, S_{\mu^*}: \lambda,\mu \in \Lambda \}$ in an $R$-algebra $A$, there is a unique $R$-algebra homomorphism $\pi_S: \KP_R(\Lambda) \to A$ such that $\pi_S(s_\lambda)=S_\lambda$ and $\pi_S(s_{\mu^*}) = S_{\mu^*}$ for all $\lambda,\mu \in \Lambda$ (see \cite[Theorem~3.7]{CP}). By \cite[Proposition~3.3]{CP}, $KP_R(\Lambda) = \operatorname{span}_R \{s_\lambda s_{\mu^*}: \lambda, \mu \in \Lambda, s(\lambda)=s(\mu) \}$. The \emph{diagonal subalgebra} of $\KP_R(\Lambda)$ is given by $D_R(\Lambda) := \operatorname{span}_R \{s_\mu s_{\mu^*}: \mu \in \Lambda \}$. In this paper, we will consider both ring-isomorphisms and $*$-algebra-isomorphisms of Kumjian--Pask algebras. For a finitely-aligned $k_1$-graph $\Lone$ and a finitely-aligned $k_2$-graph $\Ltwo$, we say that an isomorphism $\phi: \KP_R(\Lone) \to \KP_R(\Ltwo)$ is \emph{diagonal-preserving} if $\phi(D_R(\Lone))=D_R(\Ltwo)$.

\subsection{The boundary path groupoid of a higher-rank graph}

A \emph{boundary path} of a $k$-graph $\Lambda$ is a degree-preserving functor $x : \Omega_{k,m} \to \Lambda$ such that for every $n \in \mathbb{N}^k$ with $n \le m$ and every finite exhaustive subset $E \subset x(n, n) \Lambda$, there exists $\lambda \in E$ such that $x(n, n + d (\lambda)) = \lambda$ (see \cite[Definition~5.10]{FMY}). We write $\partial \Lambda$ for the set of all boundary paths. This is called the \emph{boundary path space} of $\Lambda$. We extend $r$ and $d$ to boundary paths $x: \Omega_{k,m} \to \Lambda$ by setting $r(x):=x(0,0)$ and $d(x) :=m$. Note that for $v \in \Lambda^0$, the set $v \partial \Lambda := \{ x \in \partial \Lambda: r(x)=v\}$ is nonempty (see \cite[Lemma~5.15]{FMY}). For $m\in\N^k$, we let $\partial\Lambda^{\ge m}:=\{x\in\partial\Lambda:d(x)\ge m\}$ and we let $\sigma_\Lambda^m:\partial\Lambda^{\ge m} \to \partial\Lambda$ be the \emph{shift map} defined by $\sigma_\Lambda^m(x)(p,q)=x(p+m,q+m)$ for $p,q\in\N^k$ with $p\le q\le d(x)-m$ (see \cite[Notation~5.1 and Lemma~5.13(1)]{Y}). We implicitly assume that $x\in\partial\Lambda^{\ge m}$ whenever we write $\sigma_\Lambda^m(x)$. For $\lambda \in \Lambda$ and $x \in s(\lambda) \partial \Lambda$, there exists $\lambda x \in \partial \Lambda$ such that $x=\sigma_\Lambda^{d(\lambda)}(\lambda x)$ and $\lambda = (\lambda x)(0,d(\lambda))$ (see \cite[Proposition~3.0.11]{WThesis}). Define $\lambda \partial \Lambda := \{ \lambda x: x \in s(\lambda) \partial \Lambda \}$. We also write $Z_\Lambda(\lambda) := \lambda \partial \Lambda$. For a finite nonexhaustive subset $G \subseteq s(\lambda) \Lambda$, we define \[Z_\Lambda(\lambda \setminus G) := Z_\Lambda(\lambda) \setminus \left(\bigcup_{\nu \in G} Z_\Lambda(\lambda \nu) \right).\] The sets $Z_\Lambda(\lambda \setminus G)$ form a basis for a locally compact Hausdorff topology on $\partial \Lambda$, and each $Z_\Lambda(\lambda \setminus G)$ is compact in this topology.

We recall the boundary path groupoid $\G_\Lambda$ of a $k$-graph $\Lambda$ as defined in \cite[Definition~4.8]{Y} (see also \cite[Example~5.2]{CP}).

The \emph{boundary path groupoid} $\G_\Lambda$ is given by
\begin{align*} 
\G_\Lambda := \{ (x,m,y) \in \partial \Lambda \times \Z^k \times \partial \Lambda: & \text{ there exist } p,q \in \N^k \text{ such that } \\ & p \le d(x), q \le d(y), p - q = m \text{ and } \sigma_\Lambda^p(x)=\sigma_\Lambda^q(y) \}, 
\end{align*} 
with partially-defined product $(x_1,m_1,y_1)(y_1, m_2,y_2) \mapsto (x_1, m_1+m_2, y_2)$, inverse operation $(x,m,y) \mapsto (y,-m,x)$, and range and source maps $r(x,m,y) := x$ and $s(x,m,y) := y$. We identify the boundary path space $\partial \Lambda$ with the unit space $\G_\Lambda^{0}$ via the map $x \to (x,0,x)$.

We describe a topology on $\G_\Lambda$ making it a locally compact, Hausdorff and ample groupoid (see \cite[Proposition~6.5 and Proposition~6.8]{FMY}). Write $\Lambda *_s \Lambda := \{ (\lambda, \mu) \in \Lambda \times \Lambda: s(\lambda)=s(\mu) \}$. For $(\lambda,\mu) \in \Lambda*_s \Lambda$ and a finite nonexhaustive subset $G \subseteq s(\lambda) \Lambda$, we write
\begin{align*} 
Z_\Lambda(\lambda *_s \mu) := \{(x,d(\lambda)-d(\mu),y) \in \G_\Lambda: \, & \, x \in Z_\Lambda(\lambda) \text{ and } y \in Z_\Lambda(\mu)\},
\end{align*} 
and
\[Z_\Lambda(\lambda*_s \mu \setminus G) := Z_\Lambda(\lambda *_s \mu) \setminus \left( \bigcup_{\nu \in G} Z_\Lambda(\lambda \nu *_s \mu \nu) \right). \]
The sets $Z_\Lambda(\lambda*_s \mu \setminus G)$ form a basis for a second-countable, Hausdorff topology on $\G_\Lambda$, and each $Z_\Lambda(\lambda *_s \mu \setminus G)$ is a compact open bisection. The relative topology on $\G_\Lambda^0$ agrees with the topology on $\partial \Lambda$.

The $C^*$-algebras of finitely-aligned $k$-graphs can be realised as the groupoid $C^*$-algebras of the boundary path groupoid, and the Kumjian--Pask algebras as the Steinberg algebras of the boundary path groupoid. More specifically, if $\Lambda$ is a finitely-aligned $k$-graph, then by \cite[Theorem~6.13]{FMY}, there is an isomorphism $\pi: C^*(\Lambda) \to C^*(\G_\Lambda)$ satisfying $\pi(s_\lambda) = 1_{Z_\Lambda(\lambda *_s s(\lambda))}$ for all $\lambda \in \Lambda$. Similarly, if $R$ is a commutative ring with unit, then by \cite[Proposition~5.4]{CP} there is an isomorphism $\pi_T: \KP_R(\Lambda) \to A_R (G_\Lambda)$ such that $\pi_T(s_\lambda) = 1_{Z_\Lambda(\lambda *_s s(\lambda))}$ and $\pi_T(s_{\lambda^*}) = 1_{Z_\Lambda(s(\lambda) *_s \lambda)}$ for $\lambda \in \Lambda$. The isomorphism $\pi$ carries $\D(\Lambda)$ to $C_0(\G_\Lambda^{0})$ viewed as a subalgebra of $C^*(\G_\Lambda)$, and likewise the isomorphism $\pi_T$ carries $D_R(\Lambda)$ to $A_R(\G_\Lambda^{0})$ viewed as a subalgebra of $A_R(\G_\Lambda)$. It follows from results about groupoid $C^*$-algebras in \cite{CRST} that the diagonal-preserving isomorphisms of $k$-graph $C^*$-algebras are characterised in terms of isomorphisms of the corresponding boundary path groupoids. Similarly, it follows from results about Steinberg algebras in \cite{CR2} that the ring-isomorphisms and $*$-algebra-isomorphisms of Kumjian--Pask algebras are also characterised in terms of isomorphisms of the corresponding boundary path groupoids.

We now describe another basis for the topology on $\G_\Lambda$ that will be useful in the proofs of Lemma~\ref{lem:l} and Theorem~\ref{thm:eventual}. For $m,n\in\mathbb{N}^k$, $U$ is an open subset of $\partial\Lambda^{\ge m}$ such that $\sigma_\Lambda^m$ is injective on $U$, and $V$ is an open subset of $\partial\Lambda^{\ge n}$ such $\sigma_\Lambda^n$ is injective on $V$ and $\sigma_\Lambda^m(U)=\sigma_\Lambda^n(V)$, we define $$Z(U,m,n,V):=\{(x,m-n,y):x\in U,\ y\in V,\ \sigma_\Lambda^m(x)=\sigma_\Lambda^n(y)\}.$$

\begin{lemma}\label{lem:groupoidbasis}
Let $\Lambda$ be a finitely-aligned $k$-graph. The sets $Z(U,m,n,V)$ form a basis for the topology of $\G_\Lambda$.
\end{lemma}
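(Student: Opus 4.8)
The strategy is to show that each basic open set $Z(U,m,n,V)$ is open in $\G_\Lambda$, and conversely that each of the standard basic sets $Z_\Lambda(\lambda *_s \mu \setminus G)$ is a union of sets of the form $Z(U,m,n,V)$; since the latter sets are already known to form a basis, this suffices.

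For the first containment, fix $(x,m-n,y)\in Z(U,m,n,V)$ and let $p=m$, $q=n$, so that $\sigma_\Lambda^m(x)=\sigma_\Lambda^n(y)$ witnesses membership in $\G_\Lambda$. I would pick $\lambda=x(0,m)$ and $\mu=y(0,n)$; then $(x,m-n,y)\in Z_\Lambda(\lambda*_s\mu)$. The point is that near $(x,m-n,y)$ the graph topology on $\G_\Lambda$ is generated by sets $Z_\Lambda(\lambda\alpha *_s \mu\alpha \setminus G)$, and each such set, intersected with a suitable neighbourhood, is again of the form $Z(U',m',n',V')$ with $U'\subseteq U$ and $V'\subseteq V$: indeed one takes $U'=Z_\Lambda(\lambda\alpha\setminus G)\cap U$ and the corresponding $V'$, and uses that $\sigma_\Lambda^m$ is injective on $U$ to conclude $\sigma_\Lambda^{m+d(\alpha)}$ is injective on $U'$ and similarly for $V'$, while $\sigma_\Lambda^{m+d(\alpha)}(U')=\sigma_\Lambda^{n+d(\alpha)}(V')$ follows from the shift identities and $\sigma_\Lambda^m(U)=\sigma_\Lambda^n(V)$. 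Hence $Z(U,m,n,V)$ is open.

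Conversely, given $Z_\Lambda(\lambda*_s\mu\setminus G)$ and a point $(x,d(\lambda)-d(\mu),y)$ in it, I would produce a neighbourhood of that point of the form $Z(U,m,n,V)$ contained in $Z_\Lambda(\lambda*_s\mu\setminus G)$. Set $m=d(\lambda)$, $n=d(\mu)$. Using \cite[Lemma~5.13]{Y} (or the description of the topology on $\partial\Lambda$ above), choose a compact open neighbourhood $W$ of $\sigma_\Lambda^m(x)=\sigma_\Lambda^n(y)$ in $\partial\Lambda$ on which both $\lambda\cdot(-)$ and $\mu\cdot(-)$ restrict to homeomorphisms onto their images, and shrink $W$ so that $\lambda W\subseteq Z_\Lambda(\lambda\setminus G)$ — possible since $x\notin\bigcup_{\nu\in G}Z_\Lambda(\lambda\nu)$ and these are closed. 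Put $U=\lambda W$, $V=\mu W$. Then $\sigma_\Lambda^m$ is injective on $U$ and $\sigma_\Lambda^n$ is injective on $V$ because prepending $\lambda$ (resp. $\mu$) is injective and $\sigma_\Lambda^m(\lambda z)=z$; moreover $\sigma_\Lambda^m(U)=W=\sigma_\Lambda^n(V)$. One checks $Z(U,m,n,V)=Z_\Lambda(\lambda*_s\mu)\cap(\text{stuff avoiding }G)\subseteq Z_\Lambda(\lambda*_s\mu\setminus G)$ and contains $(x,m-n,y)$.

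The main obstacle is the bookkeeping around injectivity of the shift maps and the compatibility condition $\sigma_\Lambda^m(U)=\sigma_\Lambda^n(V)$: one must verify that prepending a path is a homeomorphism onto a (compact open) cylinder set, that it is a section of the corresponding shift, and that these properties survive the passage to the subsets needed to avoid $G$ and to match the two sides. All of this is standard manipulation with the identities $\sigma_\Lambda^{d(\lambda)}(\lambda z)=z$, $(\lambda z)(0,d(\lambda))=\lambda$, and the factorisation property, together with \cite[Proposition~3.0.11]{WThesis} and the description of the basic compact open sets $Z_\Lambda(\lambda\setminus G)$; no new ideas are required beyond careful unwinding of definitions.
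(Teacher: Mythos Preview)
Your overall two-step strategy matches the paper's: show each $Z(U,m,n,V)$ is open in the existing topology, and show each standard basic set $Z_\Lambda(\lambda *_s \mu \setminus G)$ is covered by the new sets. But the execution diverges in both directions.

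For the converse direction you work point-by-point, shrinking to a neighbourhood $W$ of $\sigma_\Lambda^m(x)$ and setting $U=\lambda W$, $V=\mu W$. The paper dispatches this in a single line: $Z_\Lambda(\lambda *_s \mu \setminus G) = Z\bigl(Z_\Lambda(\lambda\setminus G),\, d(\lambda),\, d(\mu),\, Z_\Lambda(\mu\setminus G)\bigr)$. That is, each standard basic set already \emph{is} one of the new sets, not merely a union of them; no neighbourhood-shrinking is needed.

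For the first direction your argument has a gap. To show $Z(U,m,n,V)$ is open you must, for each $(x,m-n,y)$ in it, exhibit a \emph{standard} basic set containing the point and contained in $Z(U,m-n,V)$. You instead produce a set ``of the form $Z(U',m',n',V')$ with $U'\subseteq U$, $V'\subseteq V$''. This is problematic on two counts: knowing a set is of the new form does not yet tell you it is open (that is precisely what is being proved), and the conditions $U'\subseteq U$, $V'\subseteq V$, $m'-n'=m-n$ alone do \emph{not} force $Z(U',m',n',V')\subseteq Z(U,m,n,V)$, since a point $(x',m-n,y')$ of the former satisfies $\sigma_\Lambda^{m'}(x')=\sigma_\Lambda^{n'}(y')$ but not a priori $\sigma_\Lambda^{m}(x')=\sigma_\Lambda^{n}(y')$. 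The paper's route avoids this: with $\mu=x(0,m)$, $\nu=y(0,n)$, choose $\lambda$ and $G$ so that $\sigma_\Lambda^m(x)=\sigma_\Lambda^n(y)\in Z_\Lambda(\lambda\setminus G)\subseteq\sigma_\Lambda^m(U)=\sigma_\Lambda^n(V)$, and then verify directly that the standard basic set $Z_\Lambda(\mu\lambda *_s \nu\lambda\setminus G)$ is contained in $Z(U,m,n,V)$.
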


\begin{proof}
Suppose that $m,n\in\mathbb{N}^k$, $U$ is an open subset of $\partial\Lambda^{\ge m}$ such that $\sigma_\Lambda^m$ is injective on $U$, and $V$ is an open subset of $\partial\Lambda^{\ge n}$ such that $\sigma_\Lambda^n$ is injective on $V$ and $\sigma_\Lambda^m(U)=\sigma_\Lambda^n(V)$. Define
\begin{align*} 
A=\{(\lambda,\mu,\nu,G):&\lambda\in\Lambda,\ \mu\in\Lambda^m,\ \nu\in\Lambda^n,\ r(\lambda)=s(\mu)=s(\nu),\ \\
& G\text{ is a finite nonexhaustive subset of }s(\lambda)\Lambda, \\
& \ Z_\Lambda(\mu\lambda\ast_s\nu\lambda\setminus G)\subseteq Z(U,m,n,V)\} 
\end{align*}
We claim that $Z(U,m,n,V)=\bigcup_{(\lambda,\mu,\nu,G)\in A}Z_\Lambda(\mu\lambda\ast_s\nu\lambda\setminus G)$, and is thus open in $\G_\Lambda$. It is clear that $\bigcup_{(\lambda,\mu,\nu,G)\in A}Z_\Lambda(\mu\lambda\ast_s\nu\lambda\setminus G)\subseteq Z(U,m,n,V)$. For the other direction, let $(x,m-n,y)\in Z(U,m,n,V)$. Define $\mu :=x(0,m)$ and $\nu=y(0,n)$, and choose $\lambda \in \Lambda$ and a finite nonexhaustive subset $G \subseteq s(\lambda) \Lambda$ such that $\sigma_\Lambda^m(x)=\sigma_\Lambda^n(y)\in Z(\lambda\setminus G)\subseteq \sigma_\Lambda^m(U)=\sigma_\Lambda^n(V)$. Then $(x,m-n,y)\in Z_\Lambda(\mu\lambda\ast_s\nu\lambda\setminus G) \subseteq Z(U,m,n,V)$. Hence $Z(U,m,n,V)\subseteq \bigcup_{(\lambda,\mu,\nu,G)\in A}Z_\Lambda(\mu\lambda\ast_s\nu\lambda\setminus G)$, proving the claim.

Conversely, if $(\lambda,\mu)\in\Lambda\ast_s\Lambda$ and $G$ is a finite nonexhaustive subset of $s(\lambda)\Lambda$, then $Z_\Lambda(\lambda\ast_s\mu\setminus G)=Z(Z_\Lambda(\lambda\setminus G),d(\lambda),d(\mu),Z_\Lambda(\mu\setminus G))$, so the sets $Z(U,m,n,V)$ form a basis for the topology of $\mathcal{G}_\Lambda$.
\end{proof}

The following lemma will be useful for characterising continuous orbit equivalence in the next section. Fix a total order $\preceq$ on $\mathbb{N}^k$ such that $m\le n$ implies that $m\preceq n$. For example, we could let $\preceq$ be the lexicograpical order. Now, let $\Lambda$ be a finitely-aligned $k$-graph, and define $l_\Lambda:\mathcal{G}_\Lambda\to\mathbb{N}^k$ by $$l_\Lambda(x,n,y)=\min\{l\in\mathbb{N}^k: l\ge n\text{ and }\sigma_\Lambda^l(x)=\sigma_\Lambda^{l-n}(y)\},$$ where the minimum is taken with respect to $\preceq$.

\begin{lemma}\label{lem:l}
Let $\Lambda$ be a finitely-aligned $k$-graph. Then $l_\Lambda:\mathcal{G}_\Lambda\to\mathbb{N}^k$ is continuous.
\end{lemma}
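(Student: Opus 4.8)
The plan is to show that $l_\Lambda$ is locally constant, which gives continuity because $\N^k$ carries the discrete topology. Fix $g=(x,n,y)\in\G_\Lambda$, put $a:=l_\Lambda(g)$ and $b:=a-n$; both lie in $\N^k$ since $a\ge n$ by the definition of $l_\Lambda$, and $\sigma_\Lambda^a(x)=\sigma_\Lambda^b(y)$. Set $\mu:=x(0,a)\in\Lambda^a$, $\nu:=y(0,b)\in\Lambda^b$ and $z:=\sigma_\Lambda^a(x)=\sigma_\Lambda^b(y)$, so that $s(\mu)=r(z)=s(\nu)$. For $\lambda\in\Lambda^a$ the map $w\mapsto\lambda w$ is a bijection of $Z_\Lambda(s(\lambda))$ onto $Z_\Lambda(\lambda)$ with inverse $\sigma_\Lambda^a|_{Z_\Lambda(\lambda)}$, so $W:=Z(Z_\Lambda(\mu),a,b,Z_\Lambda(\nu))$ is a well-defined basic open subset of $\G_\Lambda$ in the sense of Lemma~\ref{lem:groupoidbasis} (its middle coordinate is $a-b=n$). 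One checks directly that $g\in W$, and I claim that $l_\Lambda$ is constantly equal to $a$ on $W$.

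The technical core is a shift-decomposition formula. Let $g'=(x',n,y')\in W$ and set $z':=\sigma_\Lambda^a(x')=\sigma_\Lambda^b(y')$; by the definition of $W$ we then have $x'=\mu z'$ and $y'=\nu z'$ with the \emph{same} tail $z'$. I would show that, for every $l'\in\N^k$ with $l'\ge n$,
\[
\sigma_\Lambda^{l'}(x')=\sigma_\Lambda^{l'-n}(y')
\quad\Longleftrightarrow\quad
d(z')\ge(l'\vee a)-a\ \text{ and }\ \mu\bigl(l'\wedge a,\,a\bigr)=\nu\bigl((l'-n)\wedge b,\,b\bigr),
\]
the point being that when $d(z')\ge(l'\vee a)-a$ one has
\[
\sigma_\Lambda^{l'}(x')=\mu(l'\wedge a,a)\,\sigma_\Lambda^{(l'\vee a)-a}(z'),\qquad
\sigma_\Lambda^{l'-n}(y')=\nu((l'-n)\wedge b,b)\,\sigma_\Lambda^{(l'\vee a)-a}(z'),
\]
where the two prefixes both have degree $(l'\vee a)-l'$ and the two paths share the common tail $\sigma_\Lambda^{(l'\vee a)-a}(z')$, so equality of the paths reduces to equality of the prefixes, and \emph{this latter condition does not depend on $g'$}. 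Proving this formula is the step I expect to be the main obstacle. The reason one needs such a uniform criterion is that the set of candidate exponents $l'$ with $l'\ge n$ and $l'\prec a$ that would have to be ruled out near $g$ is in general infinite (already for $k=2$), so one cannot simply cover $g$ by finitely many bisections. The formula follows from the factorisation property and the relation $\sigma_\Lambda^{p}\circ\sigma_\Lambda^{q}=\sigma_\Lambda^{p+q}$, using that the supports of $(l'\vee a)-a$ and $(l'\vee a)-l'$ are disjoint, so that shifting by $(l'\vee a)-a$ passes over a prefix of degree $(l'\vee a)-l'$ without consuming any of it; one must also keep track of definedness of the shifts, since boundary paths may have finite degree in some coordinates.

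Granting the formula, the proof finishes quickly. Taking $l'=a$ gives $(a\vee a)-a=0\le d(z')$ and $\mu(a,a)=s(\mu)=s(\nu)=\nu(b,b)$, so $\sigma_\Lambda^a(x')=\sigma_\Lambda^{a-n}(y')$ for every $g'\in W$; hence $l_\Lambda(g')\preceq a$. For the reverse inequality, suppose $l':=l_\Lambda(g')\prec a$ for some $g'\in W$. Applying the formula with $l'':=l'\wedge a\le l'$ in place of $l'$, and using the coordinatewise identity $(l''-n)\wedge b=(l'\wedge a)-n=(l'-n)\wedge b$ together with $(l''\vee a)-a=0$, one sees that $l''\ge n$ and $\sigma_\Lambda^{l''}(x')=\sigma_\Lambda^{l''-n}(y')$; since $l''\preceq l'$ and $l'$ is the $\preceq$-minimal such exponent, $l''=l'$, i.e.\ $l'\le a$. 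Then the prefix condition for $l'$ becomes $\mu(l',a)=\nu(l'-n,b)$, and applying the same formula to $g$ in place of $g'$ (with $z$ in place of $z'$) this says precisely that $\sigma_\Lambda^{l'}(x)=\sigma_\Lambda^{l'-n}(y)$; hence $l_\Lambda(g)\preceq l'\prec a=l_\Lambda(g)$, a contradiction. Therefore $l_\Lambda\equiv a$ on $W$, and as $g$ was arbitrary, $l_\Lambda$ is locally constant, hence continuous.
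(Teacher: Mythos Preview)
Your proof has a genuine gap: the shift-decomposition formula is incorrect, and in fact your central claim that $l_\Lambda$ is constant on $W=Z(Z_\Lambda(\mu),a,b,Z_\Lambda(\nu))$ is \emph{false} in general.

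First, the formula $\sigma_\Lambda^{l'}(x')=\mu(l'\wedge a,a)\,\sigma_\Lambda^{(l'\vee a)-a}(z')$ is not even well-defined when $l'$ is incomparable to $a$: the source $s(\mu(l'\wedge a,a))=s(\mu)=r(z')$ need not equal the range $r(\sigma_\Lambda^{(l'\vee a)-a}(z'))=z'((l'\vee a)-a)$. The correct identity is $\sigma_\Lambda^{l'}(x')=x'(l',l'\vee a)\,\sigma_\Lambda^{(l'\vee a)-a}(z')$, and the prefix $x'(l',l'\vee a)$ generally differs from $\mu(l'\wedge a,a)=x'(l'\wedge a,a)$: these are the two ``halves'' of $x'(l'\wedge a,l'\vee a)$ under two different factorisations, and the factorisation rules of a $k$-graph do not force them to coincide. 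Consequently the condition $\sigma_\Lambda^{l'}(x')=\sigma_\Lambda^{l'-n}(y')$ genuinely depends on $z'$ for $l'$ incomparable to $a$.

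Here is a concrete counterexample to constancy of $l_\Lambda$ on $W$. Take the $2$-graph with one vertex, horizontal edges $e_1,e_2$, vertical edges $f_1,f_2,f_3$, and factorisation rules $e_1f_1=f_1e_1$, $e_2f_1=f_2e_1$, $e_1f_2=f_3e_1$, $e_2f_2=f_1e_2$, $e_1f_3=f_2e_2$, $e_2f_3=f_3e_2$. Let $\mu=e_1$, $\nu=e_2$, so $a=b=(1,0)$ and $n=0$, and use the lexicographic order on $\N^2$. If $z$ has all vertical edges equal to $f_2$, then one checks $\sigma^{(0,j)}(e_iz)=e_i\,\sigma^{(0,j)}(z)$ for all $j$, so $\sigma^{(0,j)}(e_1z)\ne\sigma^{(0,j)}(e_2z)$ for every $j$, whence $l_\Lambda(e_1z,0,e_2z)=(1,0)=a$. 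But if $z'$ has first vertical edge $f_1$, then from $e_1f_1=f_1e_1$ and $e_2f_1=f_2e_1$ one computes $\sigma^{(0,1)}(e_1z')=e_1\,\sigma^{(0,1)}(z')=\sigma^{(0,1)}(e_2z')$, so $l_\Lambda(e_1z',0,e_2z')=(0,1)\prec(1,0)$. Both points lie in $W$, so $l_\Lambda$ is not constant there.

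The paper's proof avoids this by arguing sequentially and exploiting that each $\sigma_\Lambda^q$ is a local homeomorphism: from $\sigma_\Lambda^p(x_i)=\sigma_\Lambda^{p-n}(y_i)$ for some $p\ge l_\Lambda(x,n,y)$ one pulls back along $\sigma_\Lambda^{p-l_\Lambda(x,n,y)}$ to obtain $\sigma_\Lambda^{l_\Lambda(x,n,y)}(x_i)=\sigma_\Lambda^{l_\Lambda(x,n,y)-n}(y_i)$ for large $i$; this shows $l_\Lambda(x_i,n_i,y_i)\preceq l_\Lambda(x,n,y)$ eventually, and a short argument using continuity of the shifts gives the reverse inequality. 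Your approach can be salvaged, but you must shrink $W$ to a neighbourhood on which the relevant initial segment of $z'$ is fixed; the neighbourhood then depends on more than just $\mu$ and $\nu$.
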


\begin{proof}
Suppose that $(x_i,n_i,y_i)_{i\in\N} \to (x,n,y)$ in $\mathcal{G}_\Lambda$. We will first show that $l(x_i,n_i,y_i)\preceq l(x,n,y)$ for large $i$. Choose $p\ge l(x,n,y)$, an open subset $U$ of $\partial\Lambda^{\ge p}$, and an open subset $V$ of $\partial\Lambda^{\ge p-n}$ such that $(x,n,y)\in Z(U,p,p-n,V)$. Then $(x_i,n_i,y_i)\in Z(U,p,p-n,V)$ for large $i$. Let $q=p-l(x,n,y)$. Then
\begin{equation*}
\sigma_\Lambda^q(\sigma_\Lambda^{l(x,n,y)}(x_i))=\sigma_\Lambda^p(x_i)=\sigma_\Lambda^{p-n}(y_i)=\sigma^q_\Lambda(\sigma_\Lambda^{l(x,n,y)-n}(y_i))
\end{equation*}
for large $i$. Since $\sigma_\Lambda^q$ is a local homeomorphism and $$\lim_{i \to \infty} \sigma_\Lambda^{l(x,n,y)}(x_i)=\sigma_\Lambda^{l(x,n,y)}(x)=\sigma_\Lambda^{l(x,n,y)-n}(y)=\lim_{i \to \infty} \sigma_\Lambda^{l(x,n,y)-n}(y_i),$$ it follows that $\sigma_\Lambda^{l(x,n,y)}(x_i)=\sigma_\Lambda^{l(x,n,y)-n}(y_i)$ for large $i$, so $l(x_i,n_i,y_i)\preceq l(x,n,y)$ for large $i$. We now show that $\lim_{i \to \infty} l(x_i,n_i,y_i)= l(x,n,y)$. Since there are only finitely many $l\in\mathbb{N}^k$ for which $l\preceq l(x,n,y)$, it suffices to show that if $l(x_i,n_i,y_i)=l$ for infinitely many $i$, then $l(x,n,y)\preceq l$. So suppose $l(x_i,n_i,y_i)=l$ for infinitely many $i$. Since $\lim_{i \to \infty} n_i=n$, it follows that $l\ge n$. Furthermore, since $\sigma_\Lambda^l(x_i)=\sigma_\Lambda^{l-n_i}(y_i)$ for infinitely many $i$, and $\lim_{i\to\infty} x_i=x$, $\lim_{i\to\infty} n_i=n$, and $\lim_{i\to\infty} y_i=y$, we see that $\sigma_\Lambda^l(x)=\sigma_\Lambda^{l-n}(y)$. Hence $l(x,n,y)\preceq l$ as desired.
\end{proof}

\section{Continuous orbit equivalence}\label{sec:orbit}

In this section, we introduce the notion of continuous orbit equivalence of finitely-aligned $k$-graphs, and characterise continuous orbit equivalence of $k$-graphs in terms of isomorphisms of boundary path groupoids. We then use results of \cite{CR2} and \cite{CRST} to characterise continuous orbit equivalence of $k$-graphs in terms of diagonal-preserving isomorphisms of $k$-graph $C^*$-algebras and also in terms of diagonal-preserving ring-isomorphisms and $*$-algebra-isomorphisms of Kumjian--Pask algebras.

\begin{definition}
Let $\Lambda$ be a finitely-aligned $k$-graph. The \emph{orbit} of $x \in \partial \Lambda$ is given by $$\orb_{\Lambda}(x) := \bigcup_{n \in \N^k} \bigcup_{m \le d(x)} (\sigma_\Lambda^n)^{-1}(\{\sigma_\Lambda^m (x)\}).$$ Let $k_1,k_2 \in \N$, and let $\Lambda_1$ be a finitely-aligned $k_1$-graph and $\Lambda_2$ a finitely-aligned $k_2$-graph. An \emph{orbit equivalence} between $\Lambda_1$ and $\Lambda_2$ is a homeomorphism $h: \partial \Lambda_1 \to \bps_2$ such that $h(\orb_{\Lambda_1}(x_1))=\orb_{\Lambda_2}(h(x_1))$ for all $x_1\in\bps_1$.
\end{definition}

Suppose that $h: \bps_1\to \bps_2$ is an orbit equivalence. For $m_1 \in \N^{k_1}$ and $x_1 \in \bps_1^{\ge m_1}$, we have that $h(\sigma_{\Lambda_1}^{m_1}(x_1)) \in \orb_{\Lambda_2}(h(x_1))$, so there are $f_{m_1}(x_1),\ g_{m_1}(x_1) \in \N^{k_1}$ such that
\begin{align} \label{eqn:orb1}
\sigma_{\Ltwo}^{f_{m_1}(x_1)}(h(\sigma_{\Lone}^{m_1}(x_1)))
=\sigma_{\Ltwo}^{g_{m_1}(x_1)}(h(x_1)).
\end{align}
Similarly, for $m_2 \in \N^{k_2}$ and $x_2 \in \bps_2^{\ge m_2}$, we have that $h^{-1}(\sigma_{\Lambda_2}^{m_2}(x_2)) \in \orb_{\Lambda_1}(h^{-1}(x_2))$, so there are $i_{m_2}(x_2),\ j_{m_2}(x_2) \in \N^{k_2}$ such that
\begin{align} \label{eqn:orb2}
\sigma_{\Lone}^{i_{m_2}(x_2)}(h^{-1}(\sigma_{\Ltwo}^{m_2}(x_2)))
=\sigma_{\Lone}^{j_{m_2}(x_2)}(h^{-1}(x_2)).
\end{align}

\begin{definition}
If $f_{m_1}(x_1),\ g_{m_1}(x_1),\ i_{m_2}(x_2),\ j_{m_2}(x_2)$ can be chosen such that $f_{m_1},\ g_{m_1}: \partial \Lambda_1^{\ge m_1} \to \N^{k_1}$ and $i_{m_2},\ j_{m_2}: \partial \Lambda_2^{\ge m_2} \to \N^{k_2}$ are continuous for all $m_1\in\N^{k_1}$ and all $m_2\in\N^{k_2}$, and such that
\begin{equation}\label{eqn:cocycle1}
g_{m_1+n_1}(x_1)-f_{m_1+n_1}(x_1)=g_{m_1}(x_1)-f_{m_1}(x_1)+g_{n_1}(\sigma_{\Lone}^{m_1}(x_1))-f_{n_1}(\sigma_{\Lone}^{m_1}(x_1))
\end{equation}
for $m_1,\ n_1\in\N^{k_1}$ and $x_1 \in \bps_1^{\ge m_1+n_1}$, and
\begin{equation}\label{eqn:cocycle2}
j_{m_2+n_2}(x_2)-i_{m_2+n_2}(x_2)=j_{m_2}(x_2)-i_{m_2}(x_2)+j_{n_2}(\sigma_{\Ltwo}^{m_2}(x_2))-i_{n_2}(\sigma_{\Ltwo}^{m_2}(x_2))
\end{equation}
for $n_1,n_2\in\N^{k_2}$ and $x_2 \in \bps_2^{\ge n_1+n_2}$, then we say that $h$ is a \emph{continuous orbit equivalence}, that $\Lone$ and $\Ltwo$ are \emph{continuously orbit equivalent}, and that the family $$\{f_{m_1},\ g_{m_1},\ i_{m_2},\ j_{m_2}:m_1\in\N^{k_1},\ m_2\in\N^{k_2}\}$$ is a \emph{family of continuous cocycles for $h$}.
\end{definition}

\begin{exm}\label{exm:coe}
Let $k_1, k_2 \in \N$ and let $\Lambda_1 := \Omega_{k_1,\infty}$ and $\Lambda_2 := \Omega_{k_2,\infty}$. Note that the range maps $r_i:\bps_i\to\N^{k_i}$ are bijections for $i=1,2$. If $\phi: \N^{k_1} \to \N^{k_2}$ is a bijection, then there is a homeomorphism $h: \bps_1 \to \bps_2$ such that $r_2 \circ h = \phi \circ r_1$. We claim that $h$ is a continuous orbit equivalence with family of continuous cocycles given by $f_{m_1}(x_1):=\phi(r_1(x_1))$ and $g_{m_1}(x_1) := \phi(r_1(x_1)+m_1)$ for $m_1 \in \N^{k_1}$ and $x_1 \in \partial \Lambda_1^{\ge m_1}$, and $i_{m_2}(x_2):=\phi^{-1}(r_2(x_2))$ and $j_{m_2}(x_2) := \phi^{-1}(r_2(x_2)+m_2)$ for $m_2 \in \N^{k_2}$ and $x_2 \in \partial \Lambda_2^{\ge m_2}$.

To check the claim, first note that $r_i(\sigma_{\Lambda_i}^{m_i}(x_i))=r_i(x_i)+m_i$ for $i\in\{1,2\}$, $m_i\in\N^{k_i}$ and $x_i\in\partial \Lambda_i^{\ge m_i}$. Using this at the first and fifth equalities, we calculate
\begin{align*}
r_2(\sigma_{\Lambda_2}^{\phi(r_1(x_1)+m_1)}(h(x_1))) &= \phi(r_1(x_1)+m_1)+r_2(h(x_1)) = \phi(r_1(x_1)+m_1)+\phi(r_1(x_1)) \\
&= \phi(r_1(\sigma_{\Lambda_1}^{m_1}(x_1)))+\phi(r_1(x_1)) = r_2(h(\sigma_{\Lambda_2}^{m_1}(x_1)))+\phi(r_1(x_1)) \\
&= r_2(\sigma_{\Lambda_2}^{\phi(r_1(x_1))}(h(\sigma_{\Lambda_1}^{m_1}(x_1))),
\end{align*} 
so \eqref{eqn:orb1} is satisfied since $r_2$ is bijective. A similar calculation shows that \eqref{eqn:orb2} is satisfied. Straightforward calculations show that \eqref{eqn:cocycle1} and \eqref{eqn:cocycle2} are also satisfied. 

It follows from \cite[Example~1.7~(ii)]{KP} that $C^*(\Omega_{k_i,\infty})\cong C^*(\Omega_{k_2,\infty})$ and $\D(\Omega_{k_1,\infty}) \cong \D(\Omega_{k_2,\infty})$. 
\end{exm}

Since we aim to characterise continuous orbit equivalence for arbitrary finitely-aligned higher-rank graphs, we need to consider the periodicity of boundary paths.

\begin{definition}
Let $\Lambda$ be a finitely-aligned $k$-graph and let $x\in\partial\Lambda$. We define the \emph{periodicity group} $\Per(x)$ of $x$ by
$$\Per(x):=\{m-n \in\mathbb{Z}^k: m,n \in \N^k \text{ and } \sigma_{\Lambda}^m(x)=\sigma_{\Lambda}^n(x)\}.$$
Similarly, we define the \emph{inner-peridocity group} $\IP(x)$ of $x$ by
\begin{multline*}
\IP(x):=\{m-n\in \Z^k: m,n \in \mathbb{N}^k \text{ and }\text{there is an open neighbourhood }U\text{ of }x\\\text{such that }\sigma_{\Lambda}^m(y)=\sigma_{\Lambda}^n(y)\text{ for all }y\in U\}.
\end{multline*}
\end{definition}

\begin{remark}\label{iso_per} We denote by $\iso(\G_\Lambda):=\{\eta \in \G_\Lambda: s(\eta)=r(\eta)\}$ the isotropy groupoid of $\G_\Lambda$ and by $\iso(\G_\Lambda)^\circ$ its interior. It is not hard to check that $\iso(\G_\Lambda) = \{(x,n,x): x \in \bps \text{ and } n \in \Per(x)\}$ and $\iso(\G_\Lambda)^\circ=\{(x,n,x): x \in \bps \text{ and } n \in \IP(x)\}$. 
\end{remark}

The following definition describes when a continuous orbit equivalence preserves the periodicity of boundary paths. This definition will become clear in light of Lemma~\ref{lemma:1}. 

\begin{definition} If $h: \bps_1 \to \bps_2$ is a continuous orbit equivalence with a  family of continuous cocycles $\{f_{m_1},\ g_{m_1},\ i_{m_2},\ j_{m_2}:m_1 \in \N^{k_1},\ m_2 \in \N^{k_2} \}$ satisfying
\begin{equation} \label{eqn:per1}
\{g_{m_1}(x_1)-f_{m_1}(x_1)-g_{n_1}(x_1)+f_{n_1}(x_1):m_1-n_1\in\Per(x_1)\}=\Per(h(x_1))
\end{equation}
for every $x_1\in\partial\Lambda_1$, and
\begin{equation} \label{eqn:per2}
\{j_{m_2}(x_2)-i_{m_2}(x_2)-j_{n_2}(x_2)+i_{n_2}(x_2):m_2-n_2\in\Per(x_2)\}=\Per(h^{-1}(x_2))
\end{equation}
for every $x_2\in\partial\Lambda_2$, then we say that $h$ is \emph{period-preserving}.
\end{definition}

We will see that the following type of continuous orbit equivalence is characterised by diagonal-preserving graded isomorphisms of Kumjian--Pask algebras which motivates the terminology.

\begin{definition} 
Let $\Gamma$ be a discrete group and let $\eta_1:\Lambda_1\to\Gamma$ and $\eta_2: \Lambda_2 \to \Gamma$ be functors. We say that a continuous orbit equivalence $h:\bps_1 \to \bps_2$ is $\Gamma$-\emph{graded} if there is a family of continuous cocycles $\{f_{m_1},\ g_{m_1},\ i_{m_2},\ j_{m_2}:m_1\in\N^{k_1},\ m_2\in\N^{k_2}\}$ for $h$ such that
\begin{align} \label{eq:functor1} 
\eta_1(x_1(0,m_1))=\eta_2(h(x_1)(0,g_{m_1}(x_1)))\eta_2(h(\sigma_{\Lone}^{m_1}(x_1))(0,f_{m_1}(x_1)))^{-1}, 
\end{align} 
for every $m_1\in\N^{k_1}$ and every $x_1 \in \bps_1^{\ge m_1}$, and
\begin{align} \label{eq:functor2} \eta_2(x_2(0,m_2))=\eta_1(h^{-1}(x_2)(0,j_{m_2}(x_2)))\eta_1(h^{-1}(\sigma_{\Ltwo}^{m_2}(x_2))(0,i_{m_2}(x_2)))^{-1},
\end{align} 
for every $m_2\in\N^{k_2}$ and every $x_2 \in \bps_2^{\ge m_2}$.
\end{definition}

We now state the main result of this section. If $\Gamma$ is a discrete group and $\eta:\Lambda \to \Gamma$ is a functor, then we denote by $c_\eta$ the continuous cocycle $c_\eta: \G_\Lambda \to \Gamma$ defined by $c_\eta(\mu x, d(\mu)-d(\nu),\nu x) = \eta(\mu) \eta(\nu)^{-1}$.

\begin{proposition} \label{prop:h}
Let $\Gamma$ be a discrete group, $\Lambda_1$ a finitely-aligned $k_1$-graph, $\Lambda_2$ a finitely-aligned $k_2$-graph, $\eta_1:\Lambda_1\to\Gamma$ and $\eta_2:\Lambda_2\to\Gamma$ functors, and $h:\partial\Lambda_1\to\partial\Lambda_2$ a homeomorphism. The following are equivalent.
\begin{enumerate}
	\item There is an isomorphism $\phi:\G_{\Lambda_1}\to\G_{\Lambda_2}$ such that $c_{\eta_2}\circ\phi=c_{\eta_1}$ and $\phi(x_1,0,x_1)=(h(x_1),0,h(x_1))$ for all $x_1\in\partial\Lambda_1$.
	\item The homeomorphism $h$ is a $\Gamma$-graded, period-preserving continuous orbit equivalence.
	\item The homeomorphism $h$ is a $\Gamma$-graded continuous orbit equivalence with a family of continuous cocycles $\{f_{m_1},\ g_{m_1},\ i_{m_2},\ j_{m_2}:m_1\in\N^{k_1},\ m_2\in\N^{k_2}\}$ satisfying
	\begin{equation}\label{eqn:ip1}
		\{g_{m_1}(x_1)-f_{m_1}(x_1)-g_{m_2}(x_1)+f_{m_2}(x_1):m_1-m_2\in\IP(x_1)\}=\IP(h(x_1))
	\end{equation}
	for every $x_1\in\partial\Lambda_1$, and
	\begin{equation}\label{eqn:ip2}
		\{j_{n_1}(x_2)-i_{n_1}(x_2)-j_{n_2}(x_2)+i_{n_2}(x_2):n_1-n_2\in\IP(x_2)\}=\IP(h^{-1}(x_2))
	\end{equation}
	for every $x_2\in\partial\Lambda_2$.
\end{enumerate}
\end{proposition}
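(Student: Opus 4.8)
The plan is to prove $(1)\Rightarrow(2)$, $(1)\Rightarrow(3)$, $(2)\Rightarrow(1)$ and $(3)\Rightarrow(1)$; the first two share one argument and the last two share another.

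\textbf{$(1)\Rightarrow(2)$ and $(1)\Rightarrow(3)$: extracting cocycles.} For $m_1\in\N^{k_1}$ and $x_1\in\partial\Lambda_1^{\ge m_1}$ the triple $(x_1,m_1,\sigma_{\Lone}^{m_1}(x_1))$ lies in $\G_{\Lone}$, so $\phi(x_1,m_1,\sigma_{\Lone}^{m_1}(x_1))=(h(x_1),a,h(\sigma_{\Lone}^{m_1}(x_1)))$ for a unique $a\in\Z^{k_2}$ depending continuously on $x_1$ (since $x_1\mapsto(x_1,m_1,\sigma_{\Lone}^{m_1}(x_1))$ and $\phi$ are continuous and the $\Z^{k_2}$-coordinate map on $\G_{\Ltwo}$ is continuous). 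I would set $g_{m_1}(x_1):=l_{\Ltwo}(\phi(x_1,m_1,\sigma_{\Lone}^{m_1}(x_1)))$, which is continuous by Lemma~\ref{lem:l}, and $f_{m_1}(x_1):=g_{m_1}(x_1)-a\ge 0$; this gives continuous functions satisfying \eqref{eqn:orb1}, and defining $i_{m_2},j_{m_2}$ from $\phi^{-1}$ the same way gives \eqref{eqn:orb2}. Applying $\phi$ to the factorisation $(x_1,m_1+n_1,\sigma_{\Lone}^{m_1+n_1}(x_1))=(x_1,m_1,\sigma_{\Lone}^{m_1}(x_1))(\sigma_{\Lone}^{m_1}(x_1),n_1,\sigma_{\Lone}^{m_1+n_1}(x_1))$ and reading off $\Z^{k_2}$-coordinates yields \eqref{eqn:cocycle1}--\eqref{eqn:cocycle2}; computing $c_{\eta_1}(x_1,m_1,\sigma_{\Lone}^{m_1}(x_1))=\eta_1(x_1(0,m_1))$ and comparing it with $c_{\eta_2}(\phi(x_1,m_1,\sigma_{\Lone}^{m_1}(x_1)))$ via $c_{\eta_2}\circ\phi=c_{\eta_1}$ yields \eqref{eq:functor1} (and \eqref{eq:functor2} symmetrically), so $h$ is a $\Gamma$-graded continuous orbit equivalence. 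Finally, $\phi$ maps $\iso(\G_{\Lone})$ onto $\iso(\G_{\Ltwo})$ and, being a homeomorphism, maps $\iso(\G_{\Lone})^\circ$ onto $\iso(\G_{\Ltwo})^\circ$; in view of Remark~\ref{iso_per} and the identity $\phi(x_1,m_1-n_1,x_1)=(h(x_1),g_{m_1}(x_1)-f_{m_1}(x_1)-g_{n_1}(x_1)+f_{n_1}(x_1),h(x_1))$ (valid when $\sigma_{\Lone}^{m_1}(x_1)=\sigma_{\Lone}^{n_1}(x_1)$), the first gives \eqref{eqn:per1}--\eqref{eqn:per2} and the second gives \eqref{eqn:ip1}--\eqref{eqn:ip2}.

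\textbf{$(2)\Rightarrow(1)$ and $(3)\Rightarrow(1)$: building $\phi$.} From a family of continuous cocycles for $h$ I would define $\phi:\G_{\Lone}\to\G_{\Ltwo}$ by $\phi(x_1,p-q,y_1):=(h(x_1),\,g_p(x_1)-f_p(x_1)-g_q(y_1)+f_q(y_1),\,h(y_1))$ whenever $\sigma_{\Lone}^p(x_1)=\sigma_{\Lone}^q(y_1)$. Well-definedness: any two admissible pairs $(p,q),(p',q')$ are each obtained from $(p\vee p',q\vee q')$ by adding a common $r\in\N^{k_1}$, and \eqref{eqn:cocycle1} leaves the value unchanged because the terms $g_r(\sigma_{\Lone}^p(x_1))-f_r(\sigma_{\Lone}^p(x_1))$ and $g_r(\sigma_{\Lone}^q(y_1))-f_r(\sigma_{\Lone}^q(y_1))$ coincide; using \eqref{eqn:orb1} with $F=f_p(x_1)\vee f_q(y_1)$ one checks $\sigma_{\Ltwo}^{g_p(x_1)+F-f_p(x_1)}(h(x_1))=\sigma_{\Ltwo}^{F}(h(\sigma_{\Lone}^p(x_1)))=\sigma_{\Ltwo}^{g_q(y_1)+F-f_q(y_1)}(h(y_1))$, so the output indeed lies in $\G_{\Ltwo}$. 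Then $\phi$ is a groupoid homomorphism with $\phi(x_1,0,x_1)=(h(x_1),0,h(x_1))$ (again by \eqref{eqn:cocycle1}), it satisfies $c_{\eta_2}\circ\phi=c_{\eta_1}$ (by \eqref{eq:functor1} and the factorisation property of $\eta_2$), and it is continuous (if $(x_1^j,n_j,y_1^j)\to(x_1,n,y_1)$, choose a basic neighbourhood $Z(U,p,q,V)$ of the limit; eventually $\sigma_{\Lone}^p(x_1^j)=\sigma_{\Lone}^q(y_1^j)$ and $f_p,g_p,f_q,g_q$ are constant, so $\sigma_{\Ltwo}^P(h(x_1^j))=\sigma_{\Ltwo}^Q(h(y_1^j))$ for fixed $P,Q$ and the images converge in $\G_{\Ltwo}$). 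Refining each basic set $Z(U,m,n,V)\subseteq\G_{\Lone}$ into pieces on which the relevant cocycle values are constant and the pertinent powers of the shift are injective shows $\phi$ carries each piece onto a set $Z(h(U'),P,Q,h(V'))$, so $\phi$ is also an open map.

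\textbf{The main obstacle: bijectivity of $\phi$.} This is exactly where the (inner-)periodicity hypothesis is used. For each $x_1$ the construction restricts to a group homomorphism $\delta_{x_1}:\Per(x_1)\to\Per(h(x_1))$ sending $m_1-n_1$ (with $\sigma_{\Lone}^{m_1}(x_1)=\sigma_{\Lone}^{n_1}(x_1)$) to $g_{m_1}(x_1)-f_{m_1}(x_1)-g_{n_1}(x_1)+f_{n_1}(x_1)$; since $h$ is injective, $\phi$ is injective exactly when every $\delta_{x_1}$ is injective and surjective exactly when every $\delta_{x_1}$ is surjective. Building the analogous map $\psi:\G_{\Ltwo}\to\G_{\Lone}$ and homomorphisms $\epsilon_{x_2}:\Per(x_2)\to\Per(h^{-1}(x_2))$ from the $i,j$ cocycles, under hypothesis (2) the equations \eqref{eqn:per1}--\eqref{eqn:per2} assert precisely that every $\delta_{x_1}$ and every $\epsilon_{x_2}$ is surjective; since $\Per(x_1)\le\Z^{k_1}$ and $\Per(h(x_1))\le\Z^{k_2}$ are free abelian of finite rank, a pair of surjections between them forces equal rank, hence forces both maps to be isomorphisms, and $\phi$ is bijective. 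Under hypothesis (3) the same scheme is run with $\iso(\G)^\circ$ and $\IP$ in place of $\iso(\G)$ and $\Per$: because $\phi$ is a continuous open homomorphism with $\phi|_{\G_{\Lone}^0}=h$ and $h$ is injective, one checks $\phi^{-1}(\iso(\G_{\Ltwo})^\circ)=\iso(\G_{\Lone})^\circ$, so $\ker\delta_{x_1}\subseteq\IP(x_1)$; then \eqref{eqn:ip1}--\eqref{eqn:ip2} with the rank argument applied to $\IP$ force $\delta_{x_1}|_{\IP(x_1)}$ to be an isomorphism, whence $\delta_{x_1}$ is injective, and a further analysis comparing $\iso(\G)$ with $\iso(\G)^\circ$ along orbits is needed to obtain surjectivity of $\delta_{x_1}$. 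In every case $\phi$ is then a continuous open bijective groupoid homomorphism with $\phi|_{\G_{\Lone}^0}=h$ and $c_{\eta_2}\circ\phi=c_{\eta_1}$, hence an isomorphism of topological groupoids, giving (1). I expect the hardest point throughout to be precisely this last one --- that the explicitly constructed $\phi$ is a homeomorphism --- and it is exactly for it that the period-preserving condition in (2) and the inner-periodicity conditions in (3) are indispensable.
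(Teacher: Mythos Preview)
Your outline for $(1)\Rightarrow(2),(3)$ and for $(2)\Rightarrow(1)$ matches the paper's proof essentially line for line (the paper packages the construction of $\phi$ and the $\Per$ homomorphisms as two preparatory lemmas, but the content is the same, including the free-abelian rank trick). Your explicit verification that $\phi$ is open is in fact a small improvement on the paper, which simply writes ``bijective, and thus an isomorphism'' without justifying continuity of $\phi^{-1}$.

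The genuine gap is in $(3)\Rightarrow(1)$, surjectivity. You set yourself the target of proving that each $\delta_{x_1}:\Per(x_1)\to\Per(h(x_1))$ is surjective, and then concede that ``a further analysis \dots\ is needed''. That target is the wrong one: hypothesis (3) says nothing directly about $\Per$, and there is no evident way to get surjectivity on all of $\Per$ from the $\IP$ hypotheses alone. The paper avoids this entirely. Instead of studying $\delta_{x_1}$ on $\Per$, it analyses the specific defect: for $(x_2,l_2,y_2)\in\G_{\Ltwo}$ one has $\phi(\psi(x_2,l_2,y_2))=(x_2,l_2',y_2)$, and the point is that $l_2'-l_2\in\IP(x_2)$, not merely $\Per(x_2)$. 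The reason is that $\zeta\mapsto\phi(\psi(\zeta))\zeta^{-1}$ is a continuous map $\G_{\Ltwo}\to\iso(\G_{\Ltwo})$; on a small open bisection $A$ around $(x_2,l_2,y_2)$ the middle coordinate of this map is locally constant (by continuity of $\phi,\psi$ and discreteness of $\Z^{k_2}$), so $\{\phi(\psi(\zeta))\zeta^{-1}:\zeta\in A\}$ is an \emph{open} subset of $\iso(\G_{\Ltwo})$, whence $(x_2,l_2'-l_2,x_2)\in\iso(\G_{\Ltwo})^\circ$. Once the defect is in $\IP(x_2)$, you only need surjectivity of $\delta_{h^{-1}(x_2)}|_{\IP}$ onto $\IP(x_2)$, which you already have from the rank argument, and you can correct by an element of $\IP(h^{-1}(x_2))$ exactly as you do in the $(2)\Rightarrow(1)$ case. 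So the fix is not ``more analysis of $\delta_{x_1}$'' but rather: abandon the attempt to prove $\delta_{x_1}$ surjective on $\Per$ and show instead that the composite $\phi\circ\psi$ differs from the identity only by elements of the \emph{interior} of the isotropy.
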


To prove Proposition~\ref{prop:h} we need two lemmas but first we give a corollary. A finitely-aligned $k$-graph is called \emph{aperiodic} if for each $v \in \Lambda^0$, there exists a boundary path $x \in v \bps$ such that for all $\lambda,\mu \in \bps v$, $\lambda \not = \mu$ implies that $\lambda x \not = \mu x$. By \cite[Proposition~6.3]{CP}, $\Lambda$ is aperiodic if and only if $\G_\Lambda$ is topologically principal in the sense that the set of units $x \in \G^{0}$ such that $\{\eta \in \iso(\G_\Lambda): s(\eta) =x\} =\{(x,0,x)\}$ is dense in $\G^{0}$.

\begin{corollary}
If $\Lone$ and $\Ltwo$ are aperiodic finitely-aligned higher-rank graphs, then any continuous orbit equivalence $h: \partial \Lone \to \partial \Ltwo$ is period-preserving.
\end{corollary}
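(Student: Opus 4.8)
The plan is to derive the corollary from Proposition~\ref{prop:h}, invoked with the trivial group. Let $h\colon\partial\Lone\to\partial\Ltwo$ be a continuous orbit equivalence; by definition it comes with a family of continuous cocycles $\{f_{m_1},g_{m_1},i_{m_2},j_{m_2}:m_1\in\N^{k_1},\ m_2\in\N^{k_2}\}$. I would set $\Gamma=\{e\}$ and take $\eta_1\colon\Lone\to\Gamma$, $\eta_2\colon\Ltwo\to\Gamma$ to be the trivial functors; then $c_{\eta_1}$ and $c_{\eta_2}$ are trivial and the identities \eqref{eq:functor1} and \eqref{eq:functor2} hold vacuously, so $h$ is a $\Gamma$-graded continuous orbit equivalence witnessed by this family. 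By the equivalence of statements (2) and (3) in Proposition~\ref{prop:h}, it then suffices to verify that this same family satisfies \eqref{eqn:ip1} and \eqref{eqn:ip2}: once that is done, (3) holds, hence (2) holds, and in particular $h$ is period-preserving.

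The next step is to observe that aperiodicity makes \eqref{eqn:ip1} and \eqref{eqn:ip2} vacuous. By \cite[Proposition~6.3]{CP}, aperiodicity of $\Lambda_i$ is equivalent to $\G_{\Lambda_i}$ being topologically principal; for a Hausdorff ample groupoid this forces $\iso(\G_{\Lambda_i})^{\circ}=\G_{\Lambda_i}^{0}$, since a nonempty open bisection contained in $\iso(\G_{\Lambda_i})\setminus\G_{\Lambda_i}^{0}$ would have open range consisting entirely of units with nontrivial isotropy, contradicting density. By Remark~\ref{iso_per} this says exactly that $\IP(x)=\{0\}$ for every $x\in\partial\Lambda_i$ and $i=1,2$. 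Hence, for $x_1\in\partial\Lone$, the constraint $m_1-m_2\in\IP(x_1)$ in \eqref{eqn:ip1} forces $m_1=m_2$, so the left-hand side of \eqref{eqn:ip1} is $\{0\}$, while the right-hand side is $\IP(h(x_1))=\{0\}$; thus \eqref{eqn:ip1} holds, and \eqref{eqn:ip2} holds by the symmetric argument applied at points of $\partial\Ltwo$. This completes the verification of condition (3) of Proposition~\ref{prop:h}, and hence the proof.

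I expect the only genuine obstacle to be the implication ``topologically principal $\Rightarrow$ $\iso(\G_{\Lambda_i})^{\circ}=\G_{\Lambda_i}^{0}$'', which is the standard bisection argument sketched above and should be quoted rather than reproved. A direct attack on \eqref{eqn:per1} and \eqref{eqn:per2} looks less convenient: the inclusions $\subseteq$ follow at once by pushing $\sigma_{\Lone}^{m_1}(x_1)=\sigma_{\Lone}^{n_1}(x_1)$ through \eqref{eqn:orb1} (and its counterpart for $h^{-1}$), but the reverse inclusions require precisely the bootstrapping carried out inside the proof of Proposition~\ref{prop:h}, so routing through that proposition is the efficient path.
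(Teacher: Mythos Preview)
Your proposal is correct and follows essentially the same route as the paper: both reduce to the implication $(3)\Rightarrow(2)$ of Proposition~\ref{prop:h} by observing that aperiodicity (via \cite[Proposition~6.3]{CP} and Remark~\ref{iso_per}) forces $\IP(x)=\{0\}$ for all boundary paths, so that \eqref{eqn:ip1} and \eqref{eqn:ip2} hold trivially. You are slightly more explicit in setting $\Gamma$ to be trivial and in justifying why topological principality gives $\iso(\G_{\Lambda_i})^\circ=\G_{\Lambda_i}^{0}$, but the argument is the same.
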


\begin{proof}
Let $i\in\{1,2\}$ and $x_i \in \bps_i$. By Remark~\ref{iso_per} we have $\{\eta \in \iso(\G_{\Lambda_i})^\circ: s(\eta)=x_i\}=\{(x_i,n_i,x_i): n_i \in \IP(x_i)\}$. Since $\G_{\Lambda_i}$ is topologically principal it follows that $\IP(x_i)=\{0\}$. So \eqref{eqn:ip1} and \eqref{eqn:ip2} are satisfied trivially. The result now follows from  (3) $\implies$ (2) of Proposition~\ref{prop:h}
\end{proof}

\begin{lemma}\label{lemma:1}
Let $\Lambda_1$ be a finitely-aligned $k_1$-graph and $\Lambda_2$ a finitely-aligned $k_2$-graph, and suppose $h:\partial\Lambda_1\to\partial\Lambda_2$ is a continuous orbit equivalence which has a family of continuous cocycles $\{f_{m_1},\ g_{m_1},\ i_{m_2},\ j_{m_2}:m_1\in\N^{k_1},\ m_2\in\N^{k_2}\}$. There is a continuous groupoid homomorphism $\phi:\G_{\Lambda_1}\to\G_{\Lambda_2}$ such that
\begin{equation}\label{eq:homo}
\phi(x_1,l_1,y_1)=(h(x_1),g_{m_1}(x_1)-f_{m_1}(x_1)-g_{n_1}(y_1)+f_{n_1}(y_1),h(y_1))
\end{equation}
for $m_1,n_1\in\N^{k_1}$, $m_1-n_1=l_1$, $x_1\in\bps_1^{\ge m_1}$, $y_1\in\bps_1^{\ge n_1}$, and $\sigma_{\Lone}^{m_1}(x_1)=\sigma_{\Lone}^{n_1}(y_1)$.
\end{lemma}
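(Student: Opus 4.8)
The plan is to define $\phi$ by the formula \eqref{eq:homo} and then verify, in order: (i) that the formula is well-defined, (ii) that it actually lands in $\G_{\Lambda_2}$, (iii) that it is a groupoid homomorphism, and (iv) that it is continuous. For well-definedness, suppose $(x_1,l_1,y_1)\in\G_{\Lambda_1}$ admits two decompositions, say with data $(m_1,n_1)$ and $(m_1',n_1')$, both satisfying the hypotheses of the lemma. Using the shift relations \eqref{eqn:orb1} together with the cocycle identity \eqref{eqn:cocycle1}, I would show that $g_{m_1}(x_1)-f_{m_1}(x_1)-g_{n_1}(y_1)+f_{n_1}(y_1)$ does not change when $(m_1,n_1)$ is replaced by $(m_1+p,n_1+p)$ for $p\in\N^{k_1}$ (here \eqref{eqn:cocycle1} applied at $x_1$ contributes $g_p(\sigma^{m_1}_{\Lone}(x_1))-f_p(\sigma^{m_1}_{\Lone}(x_1))$ and the same applied at $y_1$ contributes the equal quantity $g_p(\sigma^{n_1}_{\Lone}(y_1))-f_p(\sigma^{n_1}_{\Lone}(y_1))$, since $\sigma^{m_1}_{\Lone}(x_1)=\sigma^{n_1}_{\Lone}(y_1)$, and these cancel). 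Since any two decompositions have a common upper bound $(m_1\vee m_1', n_1\vee n_1')$ — note $m_1-n_1=m_1'-n_1'=l_1$ forces $m_1\vee m_1' - n_1\vee n_1' = l_1$ as well — well-definedness follows.

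Next, to see $\phi(x_1,l_1,y_1)\in\G_{\Lambda_2}$, apply the shift map $\sigma_{\Lambda_2}$ appropriately to both sides of \eqref{eqn:orb1}: from $\sigma_{\Ltwo}^{f_{m_1}(x_1)}(h(\sigma_{\Lone}^{m_1}(x_1)))=\sigma_{\Ltwo}^{g_{m_1}(x_1)}(h(x_1))$ and the analogous identity for $y_1$, together with the equality $\sigma^{m_1}_{\Lone}(x_1)=\sigma^{n_1}_{\Lone}(y_1)$ (so $h(\sigma^{m_1}_{\Lone}(x_1))=h(\sigma^{n_1}_{\Lone}(y_1))$), I get that $\sigma_{\Ltwo}^{g_{m_1}(x_1)+g_{n_1}(y_1)}(h(x_1))$ and $\sigma_{\Ltwo}^{f_{m_1}(x_1)+g_{n_1}(y_1)}(h(\sigma^{m_1}_{\Lone}(x_1))) = \sigma_{\Ltwo}^{f_{n_1}(y_1)+g_{m_1}(x_1)}(h(y_1))$ agree after shifting by a common vector; hence $\sigma_{\Ltwo}^{g_{m_1}(x_1)+g_{n_1}(y_1)}(h(x_1))=\sigma_{\Ltwo}^{f_{m_1}(x_1)+f_{n_1}(y_1)}(h(y_1))$, which exhibits the required $p,q\in\N^{k_2}$ with $p-q$ equal to the middle coordinate of \eqref{eq:homo}. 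The homomorphism property is then a bookkeeping check: given composable $(x_1,l_1,y_1)$ and $(y_1,l_1',z_1)$, choose decompositions with a shared intermediate exponent at $y_1$ (possible after passing to common upper bounds), and observe the middle terms telescope.

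For continuity, I would fix $(x_1,l_1,y_1)\in\G_{\Lambda_1}$, pick a decomposition $(m_1,n_1)$ realizing it, and work on the basic open set $Z(U,m_1,n_1,V)$ from Lemma~\ref{lem:groupoidbasis}, shrinking $U,V$ so that $f_{m_1},g_{m_1}$ are constant... no — rather, so that $f_{m_1},g_{m_1},f_{n_1},g_{n_1}$ vary continuously (they are continuous by hypothesis) and $h$ is continuous; then on a neighbourhood of $(x_1,l_1,y_1)$ every point $(x_1',l_1,y_1')$ still has the \emph{same} decomposition exponents $(m_1,n_1)$ (because $Z(U,m_1,n_1,V)$ pins down $d(x_1'(0,m_1))=m_1$), so \eqref{eq:homo} expresses $\phi$ near that point as a composition of the continuous maps $h$, $f_{m_1},g_{m_1},f_{n_1},g_{n_1}$ and the continuous groupoid operations — hence $\phi$ is continuous there. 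The main obstacle I anticipate is the well-definedness argument: one must be careful that the two "off-diagonal" decompositions of a given groupoid element need not share exponents, and that the reduction to a common upper bound genuinely uses $m_1-n_1=l_1$ fixed; getting the cocycle identity \eqref{eqn:cocycle1} to apply at both $x_1$ and $y_1$ with exactly cancelling correction terms is the crux, and it is precisely here that the hypothesis $\sigma^{m_1}_{\Lone}(x_1)=\sigma^{n_1}_{\Lone}(y_1)$ is indispensable.
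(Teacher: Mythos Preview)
Your approach is essentially the same as the paper's: the paper also first verifies that the image lies in $\G_{\Lambda_2}$ using \eqref{eqn:orb1}, then checks well-definedness via the cocycle identity \eqref{eqn:cocycle1}, and declares the homomorphism and continuity checks to be routine. For well-definedness the paper passes to the common refinement $(m_1+n_1',\,n_1+n_1')=(m_1'+n_1,\,n_1'+n_1)$ rather than your coordinatewise maximum $(m_1\vee m_1',\,n_1\vee n_1')$, but the mechanism---adding the same vector to both exponents and using $\sigma_{\Lone}^{m_1}(x_1)=\sigma_{\Lone}^{n_1}(y_1)$ so that the \eqref{eqn:cocycle1}-corrections cancel---is identical.

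There is, however, a sign slip in your step~(ii). Your asserted identity
\[
\sigma_{\Ltwo}^{g_{m_1}(x_1)+g_{n_1}(y_1)}(h(x_1))=\sigma_{\Ltwo}^{f_{m_1}(x_1)+f_{n_1}(y_1)}(h(y_1))
\]
does not follow from the manipulations you describe, and even if it did, the difference of its exponents is $g_{m_1}(x_1)-f_{m_1}(x_1)+g_{n_1}(y_1)-f_{n_1}(y_1)$, which has the wrong sign on the $y_1$-terms compared with the middle coordinate in \eqref{eq:homo}. The correct computation (as in the paper) applies $\sigma_{\Ltwo}^{f_{n_1}(y_1)}$ to \eqref{eqn:orb1} at $x_1$ and $\sigma_{\Ltwo}^{f_{m_1}(x_1)}$ to \eqref{eqn:orb1} at $y_1$; since $h(\sigma_{\Lone}^{m_1}(x_1))=h(\sigma_{\Lone}^{n_1}(y_1))$, one obtains
\[
\sigma_{\Ltwo}^{f_{n_1}(y_1)+g_{m_1}(x_1)}(h(x_1))=\sigma_{\Ltwo}^{f_{m_1}(x_1)+g_{n_1}(y_1)}(h(y_1)),
\]
and now $p-q=(f_{n_1}(y_1)+g_{m_1}(x_1))-(f_{m_1}(x_1)+g_{n_1}(y_1))$ is exactly the middle coordinate of \eqref{eq:homo}. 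This is a bookkeeping error rather than a conceptual gap; the rest of your outline is sound.
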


\begin{proof} If $m_1,n_1\in\N^{k_1}$, $x_1\in\bps_1^{\ge m_1}$, $y_1\in\bps_1^{\ge n_1}$, $\sigma_{\Lone}^{m_1}(x_1)=\sigma_{\Lone}^{n_1}(y_1)$, then using \eqref{eqn:orb1} at the first and third equalities, we calculate \begin{align*} \sigma_{\Lambda_2}^{f_{n_1}(y_1)+ g_{m_1}(x_1)}(h(x_1)) &= \sigma_{\Lambda_2}^{f_{n_1}(y_1)+f_{m_1}(x_1)}(h(\sigma_{\Lambda_1}^{m_1}(x_1)) \\ & = \sigma_{\Lambda_2}^{f_{m_1}(x_1)+f_{n_1}(y_1)}(h(\sigma_{\Lambda_1}^{n_1}(y_1))) = \sigma_{\Lambda_2}^{f_{m_1}(x_1)+g_{n_1}(y_1)}(h(y_1)). \end{align*} Hence $(h(x_1),g_{m_1}(x_1)-f_{m_1}(x_1)-g_{n_1}(y_1)+f_{n_1}(y_1),h(y_1))\in\G_{\Ltwo}$.

We claim that if $x_1,y_1\in\bps_1$, $m_1,n_1,m'_1,n'_1\in\N^{k_1}$, $\sigma_{\Lone}^{m_1}(x_1)=\sigma_{\Lone}^{n_1}(y_1)$, $\sigma_{\Lone}^{m'_1}(x_1)=\sigma_{\Lone}^{n'_1}(y_1)$, and $m_1-n_1=m'_1-n'_1$, then 
\begin{align*} g_{m_1}(x_1)-f_{m_1}(x_1)-g_{n_1}(y_1)+f_{n_1}(y_1)=g_{m'_1}(x_1)-f_{m'_1}(x_1)-g_{n'_1}(y_1)+f_{n'_1}(y_1).
\end{align*} 
To prove the claim, write $p_{m} := g_{m}-f_{m}$ for $m \in \N^{k_1}$. By \eqref{eqn:cocycle1}, we have that 
\begin{align*} 
p_{m_1+n'_1}(x_1)-p_{n_1+n_1'}(y_1) &= p_{m_1}(x_1)+p_{n_1'}(\sigma_{\Lambda_1}^{m_1}(x_1))-p_{n_1}(y_1)-p_{n_1'}(\sigma_{\Lambda_1}^{n_1}(y_1)) \\ &=p_{m_1}(x_1)-p_{n_1}(y_1).
\end{align*} 
Similarly, $p_{m_1'+n_1}(x_1)-p_{n_1'+n_1}(y_1)=p_{m_1'}(x_1)-p_{n_1'}(y_1)$, so 
\begin{align*} 
p_{m_1}(x_1) - p_{n_1}(y_1) &= p_{m_1+n_1'}(x_1)-p_{n_1+n_1'}(y_1)
\\&= p_{m_1'+n_1}(x_1)-p_{n_1'+n_1}(y_1) = p_{m_1'}(x_1) - p_{n_1'}(y_1),
\end{align*}
proving the claim. Thus, there is a well-defined map $\phi:\G_{\Lambda_1}\to\G_{\Lambda_2}$ such that \eqref{eq:homo} holds for $m_1,n_1\in\N^{k_1}$, $m_1-n_1=l_1$, $x_1\in\bps_1^{\ge m_1}$, $y_1\in\bps_1^{\ge n_1}$, $\sigma_{\Lone}^{m_1}(x_1)=\sigma_{\Lone}^{n_1}(y_1)$. It is straight-forward to check that $\phi$ is a continuous groupoid homomorphism.
\end{proof}

\begin{lemma}\label{lemma:2}
Let $\Lambda_1$ be a finitely-aligned $k_1$-graph and $\Lambda_2$ a finitely-aligned $k_2$-graph, and suppose $h:\partial\Lambda_1\to\partial\Lambda_2$ is a continuous orbit equivalence which has a family of continuous cocycles $\{f_{m_1},\ g_{m_1},\ i_{m_2},\ j_{m_2}:m_1\in\N^{k_1},\ m_2\in\N^{k_2}\}$. For each $x_1\in\bps_1$, there is a group homomorphism $\phi_{\Per(x_1)}:\Per(x_1)\to\Per(h(x_1))$ such that
\begin{equation}\label{eq:group}
\phi_{\Per(x_1)}(m_1-n_1)=g_{m_1}(x_1)-f_{m_1}(x_1)-g_{n_1}(x_1)+f_{n_1}(x_1)
\end{equation}
for $m_1-n_1\in\Per(x_1)$. Moreover, $\phi_{\Per(x_1)}$ restricts to a group homomorphism $\phi_{\IP(x_1)}:\IP(x_1)\to\IP(h(x_1))$.

If \eqref{eqn:per1} holds, then $\phi_{\Per(x_1)}$ is surjective, and if in addition \eqref{eqn:per2} holds for $x_2=h(x_1)$, then $\phi_{\Per(x_1)}$ is an isomorphism.

If \eqref{eqn:ip1} holds, then $\phi_{\IP(x_1)}$ is surjective, and if in addition \eqref{eqn:ip2} holds for $x_2=h(x_1)$, then $\phi_{\IP(x_1)}$ is an isomorphism.
\end{lemma}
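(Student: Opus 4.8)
The plan is to build $\phi_{\Per(x_1)}$ as a restriction of the groupoid homomorphism $\phi$ from Lemma~\ref{lemma:1}. First I would observe that by Remark~\ref{iso_per}, an element $m_1 - n_1 \in \Per(x_1)$ corresponds to the isotropy element $(x_1, m_1 - n_1, x_1) \in \iso(\G_{\Lone})$, and Lemma~\ref{lemma:1} gives $\phi(x_1, m_1 - n_1, x_1) = (h(x_1), g_{m_1}(x_1) - f_{m_1}(x_1) - g_{n_1}(x_1) + f_{n_1}(x_1), h(x_1))$, which lies in $\iso(\G_{\Ltwo})$ since $\phi$ is a groupoid homomorphism and fixes the unit space via $h$. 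Hence $g_{m_1}(x_1) - f_{m_1}(x_1) - g_{n_1}(x_1) + f_{n_1}(x_1) \in \Per(h(x_1))$ by Remark~\ref{iso_per} again. The fact that this value depends only on $m_1 - n_1$ (not on the particular representatives $m_1, n_1$) is exactly the well-definedness claim already proved inside Lemma~\ref{lemma:1} (taking $y_1 = x_1$ there), so $\phi_{\Per(x_1)}$ is well defined by \eqref{eq:group}. That it is a group homomorphism follows because $\phi$ is a groupoid homomorphism: $(x_1, l_1, x_1)(x_1, l_1', x_1) = (x_1, l_1 + l_1', x_1)$ maps under $\phi$ to the product of the images, and the second coordinates add. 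Alternatively, one can verify additivity directly from the cocycle identity \eqref{eqn:cocycle1} via the computation $p_{m_1 + m_1'}(x_1) - p_{n_1 + n_1'}(x_1) = p_{m_1}(x_1) - p_{n_1}(x_1) + p_{m_1'}(x_1) - p_{n_1'}(x_1)$ with $p_m := g_m - f_m$, mirroring the claim in Lemma~\ref{lemma:1}.

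Next I would show $\phi_{\Per(x_1)}$ restricts to $\IP(x_1) \to \IP(h(x_1))$. Since $\phi$ is continuous and maps $\iso(\G_{\Lone})$ into $\iso(\G_{\Ltwo})$, and since a continuous groupoid homomorphism maps open sets of isotropy suitably — more precisely, for $n_1 \in \IP(x_1)$ the element $(x_1, n_1, x_1)$ lies in $\iso(\G_{\Lone})^\circ$, which by Remark~\ref{iso_per} is open, and $\phi$ being continuous with $\phi(\G_{\Lone}^0) \subseteq \G_{\Ltwo}^0$ forces $\phi(\iso(\G_{\Lone})^\circ) \subseteq \iso(\G_{\Ltwo})^\circ$ — we conclude $\phi_{\Per(x_1)}(n_1) \in \IP(h(x_1))$. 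Concretely: pick an open neighbourhood $W$ of $(x_1, n_1, x_1)$ inside $\iso(\G_{\Lone})$; then $\phi(W)$ is contained in $\iso(\G_{\Ltwo})$, and shrinking, $\phi$ restricted near $(x_1,n_1,x_1)$ lands in a bisection of $\iso(\G_{\Ltwo})$ around $(h(x_1), \phi_{\Per(x_1)}(n_1), h(x_1))$, witnessing membership in $\IP(h(x_1))$; one then translates this back to the boundary-path formulation using that $\sigma^m_{\Ltwo}(y) = \sigma^n_{\Ltwo}(y)$ holds on a neighbourhood. This gives $\phi_{\IP(x_1)}$.

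For surjectivity and injectivity, I would argue symmetrically. If \eqref{eqn:per1} holds for $x_1$, then by definition the image of $\phi_{\Per(x_1)}$ is all of $\Per(h(x_1))$, so $\phi_{\Per(x_1)}$ is surjective. If in addition \eqref{eqn:per2} holds for $x_2 = h(x_1)$, then applying the same construction to $h^{-1}$ and the cocycles $\{i_{m_2}, j_{m_2}\}$ yields a surjective homomorphism $\psi_{\Per(h(x_1))} : \Per(h(x_1)) \to \Per(x_1)$; composing $\psi \circ \phi$ and $\phi \circ \psi$ and checking via the cocycle identities \eqref{eqn:cocycle1}, \eqref{eqn:cocycle2} (together with \eqref{eqn:orb1}, \eqref{eqn:orb2} relating $h$ and $h^{-1}$) that both composites are the identity shows $\phi_{\Per(x_1)}$ is an isomorphism. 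The analogous statements for $\phi_{\IP(x_1)}$ under \eqref{eqn:ip1} and \eqref{eqn:ip2} follow by the identical argument restricted to the inner-periodicity subgroups. The main obstacle I anticipate is bookkeeping the interplay between the four cocycle families when verifying that $\psi \circ \phi = \id$ — in particular, showing $j_{g_{m_1}(x_1)}(h(x_1)) - i_{g_{m_1}(x_1)}(h(x_1)) - \bigl(j_{f_{m_1}(x_1)}(h(x_1)) - i_{f_{m_1}(x_1)}(h(x_1))\bigr) = m_1 - n_1$ when $m_1 - n_1 \in \Per(x_1)$ — which requires carefully composing \eqref{eqn:orb1} and \eqref{eqn:orb2} and invoking well-definedness modulo $\Per$; everything else is routine.
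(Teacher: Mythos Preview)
Your construction of $\phi_{\Per(x_1)}$ via Lemma~\ref{lemma:1} and Remark~\ref{iso_per} is correct and matches the paper. The argument that $\phi_{\Per(x_1)}$ restricts to $\IP$ is a little shaky as written: a continuous groupoid homomorphism need not carry $\iso(\G_{\Lone})^\circ$ into $\iso(\G_{\Ltwo})^\circ$, since continuous maps do not preserve interiors and $\phi$ is not yet known to be open. The paper fixes this by the direct computation you allude to at the end of that paragraph: shrink to a neighbourhood $U_1'$ of $x_1$ on which $\sigma_{\Lone}^{m_1}=\sigma_{\Lone}^{n_1}$ and all four cocycle values are constant, then use \eqref{eqn:orb1} to see that $\sigma_{\Ltwo}^{m_2}=\sigma_{\Ltwo}^{n_2}$ on the open set $h(U_1')$. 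So this part is salvageable.

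The genuine gap is in your isomorphism argument. You propose to show that the reverse homomorphism $\psi$ built from $\{i_{m_2},j_{m_2}\}$ satisfies $\psi\circ\phi_{\Per(x_1)}=\id$, and you flag this as the hard bookkeeping step. In fact there is no reason for this identity to hold: the families $\{f_{m_1},g_{m_1}\}$ and $\{i_{m_2},j_{m_2}\}$ are chosen independently, and the proof of Proposition~\ref{prop:h} later shows explicitly that the composite of the two groupoid homomorphisms sends $(x_2,l_2,y_2)$ to $(x_2,l_2',y_2)$ with $l_2'\ne l_2$ in general. So $\psi\circ\phi_{\Per(x_1)}$ is only a surjective endomorphism of $\Per(x_1)$, not the identity. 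The paper's proof avoids this by a rank argument: having surjections in both directions forces $\rank\Per(x_1)=\rank\Per(h(x_1))$, and since $\Per(h(x_1))\le\Z^{k_2}$ is free abelian of finite rank, any surjection onto it from a group of the same rank is injective. You should replace the attempted verification of $\psi\circ\phi=\id$ with this Hopfian-type argument.
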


\begin{proof}
Fix $x_1\in\bps_1$. Since $\Per(x_1)=\{n_1\in\Z^{k_1}:(x_1,n_1,x_1)\in\G_{\Lone}\}$ and $\Per(h(x_1))=\{n_2\in\Z^{k_2}:(h(x_1),n_2,h(x_1))\in\G_{\Ltwo}\}$, it follows from Lemma~\ref{lemma:1} that there is a group homomorphism $\phi_{\Per(x_1)}:\Per(x_1)\to\Per(h(x_1))$ satisfying \eqref{eq:group} for $m_1-n_1\in\Per(x_1)$.

Suppose $m_1-n_1\in\IP(x_1)$. Then there is an open neighbourhood $U_1$ of $x_1$ such that $\sigma_{\Lone}^{m_1}(y_1)=\sigma_{\Lone}^{n_1}(y_1)$ for all $y_1\in U_1$. Choose an open neighbourhood $U'_1$ of $x_1$ such that $U'_1\subseteq U_1$, $f_{m_1}(y_1)=f_{m_1}(x_1)$, $g_{m_1}(y_1)=g_{m_1}(x_1)$, $f_{n_1}(y_1)=f_{n_1}(x_1)$, and $g_{n_1}(y_1)=g_{n_1}(x_1)$ for every $y_1\in U'_1$. Choose $l_2\in\N^{k_2}$ such that $m_2:=g_{m_1}(x_1)-f_{m_1}(x_1)+l_2\in\N^{k_2}$ and $n_2:=g_{n_1}(x_1)-f_{n_1}(x_1)+l_2\in\N^{k_2}$. Then $h(U'_1)$ is an open neighbourhood of $h(x_1)$ and
\[
\sigma_{\Ltwo}^{m_2}(x_2)
=\sigma_{\Ltwo}^{l_2}(h(\sigma_{\Lone}^{m_1}(h^{-1}(x_2))))
=\sigma_{\Ltwo}^{l_2}(h(\sigma_{\Lone}^{n_1}(h^{-1}(x_2))))
=\sigma_{\Ltwo}^{n_2}(x_2)
\]
for every $x_2\in h(U'_1)$. This shows that $\phi_{\Per(x_1)}(m_1-n_1)=m_2-n_2\in\IP(h(x_1))$. Thus, $\phi_{\Per(x_1)}$ restricts to a group homomorphism $\phi_{\IP(x_1)}:\IP(x_1)\to\IP(h(x_1))$.

It is clear that if \eqref{eqn:per1} holds, then $\phi_{\Per(x_1)}$ is surjective. If in addition \eqref{eqn:per2} holds for $x_2=h(x_1)$, then we have a surjective group homomorphism $\phi'_{\Per(h(x_1))}:\Per(h(x_1))\to\Per(x_1)$. It follows that the rank of $\Per(x_1)$ is equal to the rank of $\Per(h(x_1))$. Since $\Per(h(x_1))$ is a free abelian group, it follows that $\phi_{\Per(x_1)}:\Per(x_1)\to\Per(h(x_1))$ is an isomorphism.

It is clear that if \eqref{eqn:ip1} holds, then $\phi_{\IP(x_1)}$ is surjective. If in addition \eqref{eqn:ip2} holds for $x_2=h(x_1)$, then a similar argument to the one in the previous paragraph shows that $\phi_{\IP(x_1)}$ is an isomorphism.
\end{proof}

For a finitely-aligned $k$-graph $\Lambda$, we denote by $c_\Lambda$ the continuous cocycle $c_\Lambda: \G_\Lambda \to \Z^k$ defined by $c_\Lambda(x,m,y)=m$.

\begin{proof}[Proof of Proposition~\ref{prop:h}.]
$(1)\implies$ (2) and (3): Suppose $\phi:\G_{\Lambda_1}\to\G_{\Lambda_2}$ is an isomorphism such that $c_{\eta_2}\circ\phi=c_{\eta_1}$ and $\phi(x_1,0,x_1)=(h(x_1),0,h(x_1))$ for all $x_1\in\partial\Lambda_1$. Recall the continuous function $l_\Lambda$ of Lemma~\ref{lem:l}. For $m_1\in\N^{k_1}$, we define $g_{m_1}:\partial\Lambda_1^{\ge m_1} \to \N^{k_1}$ by
\[
g_{m_1}(x_1):=l_{\Lambda_2}(\phi(x_1,m_1,\sigma_{\Lone}^{m_1}(x_1)))
\]
and $f_{m_1}:\partial\Lambda_1^{\ge m_1} \to \N^{k_1}$ by
\[
f_{m_1}(x_1):=l_{\Lambda_2}(\phi(x_1,m_1,\sigma_{\Lone}^{m_1}(x_1)))-c_{\Lambda_2}(\phi(x_1,m_1,\sigma_{\Lone}^{m_1}(x_1))).
\]
It follows from the continuity of $l_{\Lambda_1}$, $\phi$ and $c_{\Lambda_2}$ that $f_{m_1}$ and $g_{m_1}$ are continuous. Similarly, for $m_2\in\N^{k_2}$, we define $j_{m_2}:\partial\Lambda_2^{\ge m_2} \to \N^{k_2}$ by
\[
j_{m_2}(x_2):=l_{\Lambda_1}(\phi^{-1}(x_2,m_2,\sigma_{\Ltwo}^{m_2}(x_2)))
\]
and $i_{m_2}:\partial\Lambda_2^{\ge m_2} \to \N^{k_2}$ by
\[
i_{m_2}(x_2):=l_{\Lambda_1}(\phi^{-1}(x_2,m_2,\sigma_{\Ltwo}^{m_2}(x_2)))-c_{\Lambda_1}(\phi^{-1}(x_2,m_2,\sigma_{\Ltwo}^{m_2}(x_2))).
\]
It follows from the continuity of $l_{\Lambda_2}$, $\phi^{-1}$ and $c_{\Lambda_1}$ that $i_{m_2}$ and $j_{m_2}$ are continuous.

We check that $f_{m_1}(x_1)$ and $g_{m_1}(x_1)$ satisfy \eqref{eqn:orb1} for $m_1 \in \N^k$ and $x_1 \in \bps_1^{\ge m_1}$. Write $l:=l_{\Lambda_2}(\phi(x_1,m_1,\sigma_{\Lambda_1}^{m_1}(x_1)))$ and $c := c_{\Lambda_2}(\phi(x_1,m_1,\sigma_{\Lambda_1}^{m_1}(x_1)))$. We have $\phi(x_1,0,x_1) = (h(x_1),0,h(x_1))$, so it follows that $(h(x_1),c,h(\sigma_{\Lambda_1}^{m_1}(x_1)))=\phi(x_1, m_1, \sigma_{\Lambda_1}^{m_1}(x_1))) \in \G_{\Ltwo}$. Thus 
\[
\sigma_{\Lambda_2}^{g_{m_1}(x_1)}(h(x_1))=\sigma_{\Lambda_2}^{l}(h(x_1)) = \sigma_{\Lambda_2}^{l-c}(h(\sigma_{\Lambda_1}^{m_1}(x_1))) = \sigma^{f_{m_1}(x_1)}_{\Lambda_2}(h(\sigma_{\Lambda_1}^{m_1}(x_1))).	
\]

We check that \eqref{eqn:cocycle1} is satisfied for $m_1, n_1 \in \N^k$ and $x_1 \in \partial \Lambda_1^{\ge m_1 + n_1}$. We calculate 
\begin{align*} 
g_{m_1+n_1}(x_1)-f_{m_1+n_1}(x_1) &=
c_{\Lambda_2}(\phi(x_1,m_1+n_1,\sigma_{\Lambda_1}^{m_1+n_1}(x_1))) \\
&= c_{\Lambda_2}(\phi(x_1,m_1,\sigma_{\Lambda_1}^{m_1}(x_1))) +c_{\Lambda_2}(\phi(\sigma_{\Lambda_1}^{m_1}(x_1), n_1, \sigma_{\Lambda_1}^{m_1+n_1}(x_1))) \\
&=g_{m_1}(x_1)-f_{m_1}(x_1) +g_{n_1}(\sigma_{\Lambda_1}^{m_1}(x_1))-f_{n_1}(\sigma_{\Lambda_1}^{m_1}(x_1)). 
\end{align*}

We check that \eqref{eqn:per1} is satisfied for $x_1 \in \bps_1$. If $m_1-n_1 \in \Per(x_1)$, then 
\begin{align*} 
& g_{m_1}(x_1)-f_{m_1}(x_1)-g_{m_1}(x_1)+f_{n_1}(x_1) \\= &c_{\Ltwo}(\phi(x_1,m_1,\sigma_{\Lone}^{m_1}(x_1))) - c_{\Ltwo}(\phi(x_1,n_1,\sigma_{\Lone}^{n_1}(x_1))) \\ =&c_{\Lambda_2}(\phi(x_1,m_1-n_1,x_1)).
\end{align*}
Since $\phi(x_1,0,x_1)=(h(x_1),0,h(x_1))$, it follows that $c_{\Lambda_2}(\phi(x_1,m_1-n_1,x_1)) \in \Per(h(x_1))$. For the other containment, suppose that $m_2 - n_2 \in \Per(h(x_1))$. Since $\phi$ is an isomorphism with $\phi(x_1,0,x_1)=(h(x_1),0,h(x_1))$, it follows that there exists $m_1-n_1 \in \Per(x_1)$ such that $m_2-n_2 = c_{\Lambda_2}(\phi(x_1,m_1-n_1,x_1))$, so \eqref{eqn:per1} holds. A similar argument shows that \eqref{eqn:ip1} also holds.

We check that \eqref{eq:functor1} is satisfied for $x_1 \in \bps_1$ and $m_1 \in \N^k$. Let $l$ and $c$ be as in the second paragraph. Using $c_{\eta_2} \circ \phi = c_{\eta_1}$ and $\sigma_{\Ltwo}^l(h(x_1))=\sigma_{\Ltwo}^{l-c}(h(\sigma_{\Lone}^{m_1}(x_1)))$ at the second equality, we calculate
\begin{align*} 
\eta_1(x_1(0,m_1)) &= c_{\eta_1}(x_1,m_1,\sigma_{\Lambda_1}^{m_1}(x_1)) \\
&= c_{\eta_2}(h(x_1),l-(l-c),h(\sigma_{\Lambda_1}^{m_1}(x_1))) \\
&= \eta_2(h(x_1)(0,l))\eta_2(h(\sigma_{\Lambda_1}^{m_1}(x_1))(0,l-c))^{-1} \\
&=\eta_2(h(x_1)(0,g_{m_1}(x_1)))\eta_2(h(\sigma_{\Lambda_1}^{m_1}(x_1))(0,f_{m_1}(x_1)))^{-1}.
\end{align*}

Similar calculations show that $i_{m_2}$ and $j_{m_2}$ satisfy \eqref{eqn:orb2}, \eqref{eqn:cocycle2}, \eqref{eqn:per2}, \eqref{eq:functor2}, and \eqref{eqn:ip2}.

$(2)\implies (1)$: Suppose $\{f_m,g_m,i_n,j_n:m\in\N^{k_1},\ n\in\N^{k_2}\}$ is a family of continuous cocycles for $h$ such that \eqref{eqn:per1}, \eqref{eqn:per2}, \eqref{eq:functor1}, and \eqref{eq:functor2} hold. By Lemma~\ref{lemma:1} there is a continuous groupoid homomorphism $\phi:\G_{\Lambda_1}\to\G_{\Lambda_2}$ such that
\begin{equation*}
\phi(x_1,l_1,y_1)=(h(x_1),g_{m_1}(x_1)-f_{m_1}(x_1)-g_{n_1}(y_1)+f_{n_1}(y_1),h(y_1)),
\end{equation*}
for $m_1,n_1\in\N^{k_1}$, $x_1\in\bps_1^{\ge m_1}$, $y_1\in\bps_1^{\ge n_1}$, $\sigma_{\Lone}^{m_1}(x_1)=\sigma_{\Lone}^{n_1}(y_1)$, and $m_1-n_1=l_1$. We will show that $\phi$ is bijective, and thus an isomorphism.

To see that $\phi$ is injective, fix $(x_1,l_1,y_1), (x_1',l_1',y_1') \in \G_{\Lone}$ such that $\phi(x_1,l_1,y_1)=\phi (x_1',l_1',y_1')$. Since $h$ is a homeomorphism, it follows that $x_1=x'_1$ and $y_1=y'_1$. It remains to show that $l_1=l_1'$. Observe that 
\begin{align*}
\phi(x_1,l_1-l'_1,x_1)&=\phi(x_1,l_1,y_1)\phi(x'_1,l'_1,y'_1)^{-1}\\ &=\phi(x_1,l_1,y_1)\phi(x_1,l_1,y_1)^{-1}=(h(x_1),0,h(x_1)).
\end{align*}
Let $\phi_{\Per(x_1)}$ be the group homomorphism from $\Per(x_1)$ to $\Per(h(x_1))$ constructed in Lemma~\ref{lemma:2}. By the definitions of $\phi$ and $\phi_{\Per(x_1)}$, we have that $$(h(x_1),\phi_{\Per(x_1)}(l_1-l'_1),h(x_1))=\phi(x_1,l_1-l'_1,x_1)=(h(x_1),0,h(x_1)).$$ Since $\phi_{\Per(x_1)}$ is an isomorphism, we conclude that $l_1-l'_1=0$, and thus that $(x_1,l_1,y_1)=(x'_1,l'_1,y'_1)$, showing that $\phi$ is injective.

To see that $\phi$ is surjective, fix $(x_2,l_2,y_2)\in\G_{\Ltwo}$. Denote by $\phi'$ the continuous groupoid homomorphism obtained by applying Lemma~\ref{lemma:1} to $h^{-1}: \bps_2 \to \bps_1$ and the family $\{i_{m_2}, j_{m_2},\ f_{m_1},\ g_{m_1}:m_2\in\N^{k_2},\ m_1\in\N^{k_1}\}$. Then $\phi(\phi'(x_2,l_2,y_2))=(x_2,l'_2,y_2)$ for some $l'_2\in\Z^{k_2}$. Let $\phi_{\Per(h^{-1}(x_2))}$ be the group homomorphism from $\Per(h^{-1}(x_2))$ to $\Per(x_2)$ constructed in Lemma~\ref{lemma:2}. Since $\phi_{\Per(h^{-1}(x_2))}$ is an isomorphism, it follows that there is an $l_1\in\Per(h^{-1}(x_2))$ such that $\phi_{\Per(h^{-1}(x_2))}(l_1)=l_2-l'_2$. We then have that
\begin{align*}
\phi \big( (h^{-1}(x_2),l_1,h^{-1}(x_2))\phi'(x_2,l_2,y_2) \big) &=
(x_2, \phi_{\Per(h^{-1}(x_2))}(l_1), x_2) \phi(\phi'(x_2, l_2,y_2)) \\&=(x_2,l_2-l'_2,x_2)(x_2,l'_2,y_2)=(x_2,l_2,y_2),
\end{align*}
showing that $\phi$ is surjective.

Since $\phi_{\Per(x)}$ is a group isomorphism, we have that $\phi(x,0,x)=(h(x),0,h(x))$ for $x \in \bps_1$. Finally, we check that $c_{\eta_2} \circ \phi=c_{\eta_1}$. Fix $(x,l,y) \in \G_{\Lone}$ with $l=m-n$. Using the factorisation property at the third equality and \eqref{eq:functor1} at the fourth, we calculate 
\begin{align*}
c_{\eta_2}(\phi (x,l,y)) &= c_{\eta_2}(h(x),g_{m}(x)-f_{m}(x)-g_{n}(y)+f_{n}(y),h(y)) \\
&= \eta_2(h(x)(0,g_m(x)+f_m(x))) \eta_2(h(y)(0,g_n(y)+f_m(y)))^{-1} \\
&= \eta_2(h(x)(0,g_m(x)) h(x)(0,f_n(x)))\eta_2(h(y)(0,g_n(y))h(y)(0,f_m(y)))^{-1} \\
&= \eta_1(x(0,m)) \eta_1(y(0,n))^{-1} = c_{\eta_1}(x,l,y).
\end{align*}

$(3)\implies (1)$: Suppose $\{f_m,g_m,i_n,j_n:m\in\N^{k_1},\ n\in\N^{k_2}\}$ is a family of continuous cocycles for $h$ such that \eqref{eq:functor1}, \eqref{eq:functor2}, \eqref{eqn:ip1}, and \eqref{eqn:ip2} hold. Let $\phi: \G_{\Lambda_1} \to \G_{\Lambda_2}$ be the continuous groupoid homomorphism obtained by Lemma~\ref{lemma:1}. We aim to show that $\phi$ is bijective.

To see that $\phi$ is injective, fix $(x_1,l_1,y_1), (x_1',l_1',y_1') \in \G_{\Lone}$ such that $\phi(x_1,l_1,y_1)=\phi (x_1',l_1',y_1')$. Since $h$ is a homeomorphism, it follows that $x_1=x_1'$ and $y_1=y_1'$. It remains to show that $l_1=l_1'$. We begin by noting that as in the proof of (2) $\implies$ (1), $\phi(x_1,l_1-l_1',x_1)=(h(x_1),0,h(x_1))$. Since $\G_{\Ltwo}$ is étale, its unit space $\G_{\Ltwo}^{0}$ is open in $\G_{\Ltwo}$, and therefore contained in $\iso(\G_{\Ltwo})^\circ$, the interior of the isotropy groupoid. Since $\phi$ is a continuous groupoid homomorphism and $h$ is a homeomorphism, it follows that $\phi^{-1}(\iso(\G_{\Ltwo}))\subseteq \iso(\G_{\Lone})$ and $\phi^{-1}(\iso(\G_{\Ltwo})^\circ)\subseteq \iso(\G_{\Lone})^\circ$. In particular, $(x_1,l_1-l_1',x_1) = \phi^{-1}(h(x_1),0,h(x_1)) \in \iso(\G_{\Lone})^\circ,$ so $l_1-l_1' \in \IP(x_1)$ by Remark~\ref{iso_per}. Let $\phi_{\IP(x_1)}$ be the group homomorphism from $\IP(x_1)$ to $\IP(h(x_1))$ constructed in Lemma~\ref{lemma:2}. By the definitions of $\phi$ and $\phi_{\IP(x_1)}$, we have that $$(h(x_1),\phi_{\IP(x_1)}(l_1-l_1'),h(x_1)) =\phi(x_1,l_1-l_1',x_1)=\phi(h(x_1),0,h(x_1)).$$ Since $\phi_{\IP(x_1)}$ is an isomorphism, we conclude that $l_1=l_1'$, showing that $\phi$ is injective.

To see that $\phi$ is surjective, fix $(x_2, l_2,y_2) \in \G_{\Ltwo}$. Denote by $\phi'$ the continuous groupoid homomorphism obtained by applying Lemma~\ref{lemma:1} to $h^{-1}: \bps_2 \to \bps_1$ and the family $\{i_{m_2}, j_{m_2},\ f_{m_1},\ g_{m_1}:m_2\in\N^{k_2},\ m_1\in\N^{k_1}\}$. Then $\phi(\phi'(x_2,l_2,y_2))=(x_2,l_2',y_2)$ for some $l_2' \in \Z^{k_2}$, and $\phi(\phi'(x_2,l_2,y_2))(y_2,-l_2,x_2)=(x_2,l_2'-l_2,x_2) \in \iso(\G_{\Ltwo}).$ Since $(x_2,l_2,y_2) \in \G_{\Ltwo}$ is fixed, we have that $\phi(\phi'(\zeta))\zeta^{-1} \in \iso(\G_{\Ltwo})$ for every $\zeta \in \G_{\Ltwo}$. Now, we claim that $(x_2,l_2'-l_2,x_2) \in \iso(\G_{\Ltwo})^\circ$. Since $\G_{\Ltwo}$ is \'{e}tale, it follows from the continuity of $\phi$ and $\phi'$ that there is an open bisection $A \subseteq \G_{\Ltwo}$ that contains $(x_2,l_2,y_2)$ such that $\phi(\phi'(\zeta))\zeta^{-1}=(r(\zeta),l_2'-l_2,r(\zeta))$ for every $\zeta\in A$. Since $\{\phi(\phi'(\zeta))\zeta^{-1}:\zeta\in A\}$ is an open subset of $\iso(\G_{\Ltwo})$, it follows that $(x_2,l_2'-l_2,x_2)\in \iso(\G_{\Ltwo})^\circ$. Hence $l_2'-l_2 \in \IP(x_2)$ by Remark~\ref{iso_per}. Let $\phi_{\IP(h^{-1}(x_2))}$ be the group homomorphism from $\IP(h^{-1}(x_2))$ to $\IP(x_2)$ constructed in Lemma~\ref{lemma:2}. Since $\phi_{\IP(h^{-1}(x_2))}$ is an isomorphism, it follows that there is an $l_1\in\IP(h^{-1}(x_2))$ such that $\phi_{\IP(h^{-1}(x_2))}(l_1)=l_2-l'_2$. We then have that
\begin{align*}
\phi \big( (h^{-1}(x_2),l_1,h^{-1}(x_2))\phi'(x_2,l_2,y_2) \big) &=
(x_2, \phi_{\IP(h^{-1}(x_2))}(l_1), x_2) \phi(\phi'(x_2, l_2,y_2)) \\&=(x_2,l_2-l'_2,x_2)(x_2,l'_2,y_2)=(x_2,l_2,y_2),
\end{align*} showing that $\phi$ is surjective.

As in the last paragraph of the proof of (2) $\implies$ (1), $\phi(x,0,x)=(h(x),0,h(x))$ for $x \in \bps_1$ and $c_{\eta_2} \circ \phi = c_{\eta_1}$.
\end{proof}

We now apply results about groupoid $C^*$-algebras \cite{CRST} and Steinberg algebras \cite{CR2} to obtain the following two corollaries. For a discrete group $\Gamma$ and a functor $\eta: \Lambda \to \Gamma$, we denote by $\delta_{\eta}: C^*(\Lambda) \to C^*(\Lambda) \otimes C^*(\Gamma)$ the unique coaction satisfying $\delta_{\eta}(s_\lambda)=s_\lambda \otimes \eta(\lambda)$ for $\lambda \in \Lambda$.

\begin{corollary} \label{conj:1}
Let $\Gamma$ be a discrete group and let $R$ be a reduced indecomposable commutative ring with unit. Let $\Lambda_1$ be a finitely-aligned $k_1$-graph, $\Lambda_2$ a finitely-aligned $k_2$-graph, and let $\eta_1:\Lambda_1\to\Gamma$ and $\eta_2:\Lambda_2\to\Gamma$ be functors. The following are equivalent.
\begin{enumerate}
	\item There is an isomorphism $\phi:\G_{\Lambda_1}\to\G_{\Lambda_2}$ such that $c_{\eta_2}\circ\phi=c_{\eta_1}$.
	\item There is a diagonal-preserving isomorphism $\psi:C^*(\Lambda_1)\to C^*(\Lambda_2)$ such that $\delta_{\eta_2}(\psi(a))=(\psi\otimes\id)(\delta_{\eta_1}(a))$ for $a\in C^*(\Lambda)$.
	\item There is a $\Gamma$-graded diagonal-preserving ring-isomorphism from $\KP_R(\Lambda_1)$ onto $\KP_R(\Lambda_2)$.
	\item There is a $\Gamma$-graded diagonal-preserving $*$-algebra-isomorphism from $\KP_R(\Lambda_1)$ onto $\KP_R(\Lambda_2)$.
	\item There is a $\Gamma$-graded, period-preserving, continuous orbit equivalence $h:\partial\Lambda_1\to\partial\Lambda_2$.
\end{enumerate}
\end{corollary}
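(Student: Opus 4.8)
The plan is to prove the corollary by taking statement~(1) as a hub and establishing, separately, that each of (2), (3), (4) and (5) is equivalent to it. The equivalence (1)$\iff$(5) will be Proposition~\ref{prop:h} applied with the homeomorphism obtained by restricting a groupoid isomorphism to the unit spaces, while (1)$\iff$(2), (1)$\iff$(3) and (1)$\iff$(4) will follow by transporting the relevant extra structure to the boundary path groupoid and invoking the reconstruction theorems of \cite{CRST} for groupoid $C^*$-algebras and of \cite{CR2} and \cite{S2} for Steinberg algebras. No genuinely new argument is required beyond Proposition~\ref{prop:h}; the work is to set up the translation carefully.

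First I would record the dictionary. For $i\in\{1,2\}$, recall from the preliminaries the diagonal-preserving isomorphisms $\pi_i\colon C^*(\Lambda_i)\to C^*(\G_{\Lambda_i})$ with $\pi_i(s_\lambda)=1_{Z_{\Lambda_i}(\lambda\ast_s s(\lambda))}$, carrying $\D(\Lambda_i)$ onto $C_0(\G_{\Lambda_i}^0)$, and $\pi_{T,i}\colon\KP_R(\Lambda_i)\to A_R(\G_{\Lambda_i})$ with $\pi_{T,i}(s_\lambda)=1_{Z_{\Lambda_i}(\lambda\ast_s s(\lambda))}$ and $\pi_{T,i}(s_{\lambda^*})=1_{Z_{\Lambda_i}(s(\lambda)\ast_s\lambda)}$, carrying $D_R(\Lambda_i)$ onto $A_R(\G_{\Lambda_i}^0)$. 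A cocycle $c\colon\G\to\Gamma$ induces a coaction $\delta_c$ of $\Gamma$ on $C^*(\G)$, determined on a compact open bisection $B$ on which $c$ is constant by $1_B\mapsto 1_B\otimes c(B)$, and a $\Gamma$-grading of $A_R(\G)$ whose homogeneous component at $g$ consists of the functions supported in $c^{-1}(g)$. I would check on the generators $s_\lambda$ and $s_{\lambda^*}$ that $\pi_i$ intertwines $\delta_{\eta_i}$ with $\delta_{c_{\eta_i}}$ and that $\pi_{T,i}$ carries the $\Gamma$-grading of $\KP_R(\Lambda_i)$ induced by $\eta_i$ onto the $\Gamma$-grading of $A_R(\G_{\Lambda_i})$ induced by $c_{\eta_i}$; this reduces to the observations $Z_{\Lambda_i}(\lambda\ast_s s(\lambda))\subseteq c_{\eta_i}^{-1}(\eta_i(\lambda))$ and $Z_{\Lambda_i}(s(\lambda)\ast_s\lambda)\subseteq c_{\eta_i}^{-1}(\eta_i(\lambda)^{-1})$, which are immediate from the defining formula $c_{\eta_i}(\mu x,d(\mu)-d(\nu),\nu x)=\eta_i(\mu)\eta_i(\nu)^{-1}$.

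With the dictionary in place I would argue as follows. Each $\G_{\Lambda_i}$ is a second-countable, locally compact, Hausdorff, ample groupoid. By the reconstruction theorem for groupoid $C^*$-algebras from \cite{CRST}, in its coaction-equivariant form, there is an isomorphism $\phi\colon\G_{\Lambda_1}\to\G_{\Lambda_2}$ with $c_{\eta_2}\circ\phi=c_{\eta_1}$ if and only if there is a $\ast$-isomorphism $C^*(\G_{\Lambda_1})\to C^*(\G_{\Lambda_2})$ carrying $C_0(\G_{\Lambda_1}^0)$ onto $C_0(\G_{\Lambda_2}^0)$ and intertwining $\delta_{c_{\eta_1}}$ with $\delta_{c_{\eta_2}}$; transporting along $\pi_1$ and $\pi_2$ turns this into~(2), giving (1)$\iff$(2). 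Similarly, using that $R$ is a reduced indecomposable commutative ring with unit, the reconstruction results of \cite{CR2} and \cite{S2} give that such a $\phi$ exists if and only if there is a $\Gamma$-graded ring isomorphism $A_R(\G_{\Lambda_1})\to A_R(\G_{\Lambda_2})$ carrying $A_R(\G_{\Lambda_1}^0)$ onto $A_R(\G_{\Lambda_2}^0)$, and if and only if there is such a $\ast$-algebra isomorphism; transporting along $\pi_{T,1}$ and $\pi_{T,2}$ gives (1)$\iff$(3) and (1)$\iff$(4). Finally, a groupoid isomorphism $\phi$ as in~(1) maps units to units and hence restricts to a homeomorphism $h\colon\partial\Lambda_1\to\partial\Lambda_2$ of the unit spaces with $\phi(x_1,0,x_1)=(h(x_1),0,h(x_1))$ for all $x_1\in\partial\Lambda_1$, so the implication (1)$\implies$(2) of Proposition~\ref{prop:h} shows $h$ is a $\Gamma$-graded, period-preserving continuous orbit equivalence; conversely, given such an $h$, the implication (2)$\implies$(1) of Proposition~\ref{prop:h} produces an isomorphism $\phi\colon\G_{\Lambda_1}\to\G_{\Lambda_2}$ with $c_{\eta_2}\circ\phi=c_{\eta_1}$. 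This establishes (1)$\iff$(5) and completes the proof.

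I expect the only real obstacle to be one of bookkeeping: making sure the reconstruction theorems of \cite{CRST}, \cite{CR2} and \cite{S2} are quoted at precisely the generality required — in particular, that in the coaction-equivariant, respectively $\Gamma$-graded, setting no effectiveness or topological-principality hypothesis on $\G_{\Lambda_1}$ and $\G_{\Lambda_2}$ is needed, since the coaction, respectively the grading, retains exactly the information that topological principality would otherwise be used to recover — and that the diagonal subalgebras, the coactions $\delta_{\eta_i}$, and the $\Gamma$-gradings transport correctly under $\pi_i$ and $\pi_{T,i}$. Everything else is a direct consequence of Proposition~\ref{prop:h} and the cited rigidity results.
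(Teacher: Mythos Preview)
Your overall architecture matches the paper's: take (1) as the hub, get (1)$\iff$(5) from Proposition~\ref{prop:h}, and get (1)$\iff$(2), (1)$\iff$(3), (1)$\iff$(4) from the reconstruction theorems of \cite{CRST}, \cite{CR2} and \cite{S2} after transporting along the isomorphisms $\pi_i$ and $\pi_{T,i}$.

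There is, however, a genuine gap in your justification for why the reconstruction theorems apply. You write that ``in the coaction-equivariant, respectively $\Gamma$-graded, setting no effectiveness or topological-principality hypothesis \dots\ is needed, since the coaction, respectively the grading, retains exactly the information that topological principality would otherwise be used to recover.'' That reasoning fails here: $\Gamma$ is an \emph{arbitrary} discrete group and $\eta_1,\eta_2$ are arbitrary functors, so the cocycles $c_{\eta_i}$ can be trivial (take $\Gamma$ trivial, or $\eta_i$ constant), and then the coaction/grading carries no information at all --- yet the equivalences still hold (this is exactly Corollary~\ref{conj:2}). So the grading cannot be what rescues you from an isotropy hypothesis.

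What actually makes \cite[Theorem~6.2]{CRST} and \cite[Proposition~2.1, Theorem~5.7]{S2} applicable is a structural fact about $\G_{\Lambda_i}$ that you have not mentioned: for every $x\in\partial\Lambda_i$ the isotropy group $(\G_{\Lambda_i})_x^x$ sits inside $\Z^{k_i}$ via $c_{\Lambda_i}$, and is therefore torsion-free abelian (indeed either trivial or free abelian of rank at most $k_i$). This is precisely the hypothesis those reconstruction results require, and it is what the paper's proof invokes. Once you insert this observation, your argument goes through; without it, you will not find a version of the cited theorems that applies on the strength of the $\Gamma$-grading alone.
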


\begin{proof}
The equivalence of (1) and (5) follows from Proposition \ref{prop:h}. For $i=1,2$ and $x \in \bps_i$, the isotropy group $(\G_{\Lambda_i})_x^x$ is either trivial or isomorphic to $\Z^{k_i}$, so (1) and (2) are equivalent by \cite[Theorem~6.2]{CRST}, and (1), (3), and (4) are equivalent by \cite[Proposition 2.1 and Theorem 5.7]{S2} and an argument similarly to the one used to prove \cite[Theorem 3.1]{CR2}.
\end{proof}

\begin{corollary} \label{conj:2}
Let $R$ be a reduced indecomposable commutative ring with unit, let $\Lambda_1$ be a finitely-aligned $k_1$-graph, and let $\Lambda_2$ a finitely-aligned $k_2$-graph. The following are equivalent.
\begin{enumerate}
	\item The topological groupoids $\G_{\Lambda_1}$ and $\G_{\Lambda_2}$ are isomorphic.
	\item There is a diagonal-preserving $*$-isomorphism from $C^*(\Lambda_1)$ onto $C^*(\Lambda_2)$.
	\item There is a diagonal-preserving ring-isomorphism from $\KP_R(\Lambda_1)$ onto $\KP_R(\Lambda_2)$.
	\item There is a diagonal-preserving $*$-algebra-isomorphism from $\KP_R(\Lambda_1)$ onto \newline $\KP_R(\Lambda_2)$.
	\item There is a period-preserving continuous orbit equivalence $h: \bps_1 \to \bps_2$.
	\end{enumerate}
\end{corollary}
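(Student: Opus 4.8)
The plan is to obtain Corollary~\ref{conj:2} as the special case of Corollary~\ref{conj:1} in which $\Gamma$ is trivial. So I would fix $\Gamma:=\{e\}$ the trivial group and let $\eta_1\colon\Lambda_1\to\Gamma$ and $\eta_2\colon\Lambda_2\to\Gamma$ be the unique (necessarily trivial) functors; since $R$ is a reduced indecomposable commutative ring with unit and $\Lambda_1,\Lambda_2$ are finitely aligned, the hypotheses of Corollary~\ref{conj:1} hold. The remaining work is to check that, with this choice, each of the five conditions of Corollary~\ref{conj:1} reduces to the corresponding condition of Corollary~\ref{conj:2}; once this is done, the chain of equivalences in Corollary~\ref{conj:1} transports verbatim.

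For condition~(1): the cocycle $c_{\eta_i}\colon\G_{\Lambda_i}\to\Gamma$ is the constant map with value $e$, so $c_{\eta_2}\circ\phi=c_{\eta_1}$ is satisfied by \emph{every} groupoid isomorphism $\phi\colon\G_{\Lambda_1}\to\G_{\Lambda_2}$, and (1) of Corollary~\ref{conj:1} becomes (1) of Corollary~\ref{conj:2}. For condition~(2): since $\Gamma$ is trivial we have $C^*(\Gamma)\cong\mathbb{C}$, and under the identification $C^*(\Lambda_i)\otimes\mathbb{C}\cong C^*(\Lambda_i)$ the coaction $\delta_{\eta_i}$ becomes the identity map (it is determined by $\delta_{\eta_i}(s_\lambda)=s_\lambda\otimes\eta_i(\lambda)=s_\lambda\otimes e$ on generators); hence the intertwining condition $\delta_{\eta_2}(\psi(a))=(\psi\otimes\id)(\delta_{\eta_1}(a))$ is automatic, and, recalling that $C^*$-algebra isomorphisms here are $*$-isomorphisms, (2) of Corollary~\ref{conj:1} becomes (2) of Corollary~\ref{conj:2}. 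For conditions~(3) and~(4): the $\Gamma$-grading on $\KP_R(\Lambda_i)$ induced by $\eta_i$ is concentrated in the single degree $e$, so ``$\Gamma$-graded'' imposes no constraint on a ring-isomorphism or a $*$-algebra-isomorphism, giving (3) and (4) of Corollary~\ref{conj:2}. Finally, for condition~(5): for any family of continuous cocycles, the identities~\eqref{eq:functor1} and~\eqref{eq:functor2} read $e=e$, so ``$\Gamma$-graded'' is vacuous and (5) of Corollary~\ref{conj:1} becomes the existence of a period-preserving continuous orbit equivalence.

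Combining these identifications yields exactly the equivalences asserted in Corollary~\ref{conj:2}. I do not expect a genuine obstacle: the entire content of Corollary~\ref{conj:2} is already packaged in Corollary~\ref{conj:1}, and the only point requiring a moment's care is the bookkeeping in condition~(2), namely that the coaction $\delta_{\eta_i}$ really does degenerate to the identity for trivial $\Gamma$.
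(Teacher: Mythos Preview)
Your proposal is correct and follows exactly the same approach as the paper, which simply states that the result follows from Corollary~\ref{conj:1} by letting $\Gamma$ be the trivial group. Your write-up just makes explicit the routine bookkeeping (that the cocycle, coaction, grading, and $\Gamma$-graded conditions all become vacuous) which the paper leaves to the reader.
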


\begin{proof}
The result follows from Corollary~\ref{conj:1} by letting $\Gamma$ be the trivial group.
\end{proof}

We now show that there are $2$-graphs whose boundary path spaces are homeomorphic but not continuously orbit equivalent.

\begin{lemma}\label{lem:bps}
If $\Lone$ and $\Ltwo$ are $2$-graphs with the same skeleton, then $\bps_1 \cong \bps_2$.
\end{lemma}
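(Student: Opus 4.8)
The statement is that two $2$-graphs with the same skeleton have homeomorphic boundary path spaces. Recall that the skeleton of a $2$-graph $\Lambda$ records the vertices $\Lambda^0$, the edges $\Lambda^{e_1}$ and $\Lambda^{e_2}$ (where $e_1,e_2$ are the generators of $\N^2$), their ranges and sources, and the bijection $\Lambda^{e_1+e_2}\to\Lambda^{e_2+e_1}$ encoding the factorisation property for paths of degree $(1,1)$. Since a $2$-graph is determined up to isomorphism by its skeleton together with this "commuting squares" bijection, having \emph{the same skeleton} means $\Lambda_1$ and $\Lambda_2$ share the same vertices, edges, and blue–red square identifications — but the excerpt uses "same skeleton" in the sense that only the underlying coloured graph (vertices and two colours of edges) agrees, while the factorisation bijection on squares may differ. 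The plan is to show that, regardless of which factorisation rule one picks, the boundary path space depends only on this coloured graph data.

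First I would fix notation: write $E_1$ for the coloured graph whose $C_i$-edges are $\Lambda_i^{e_i}$. For a $2$-graph, every path $\lambda$ of degree $n=(n_1,n_2)$ has a unique factorisation into a product of $n_1+n_2$ edges in which all $C_1$-edges precede all $C_2$-edges; equally it has a factorisation with all $C_2$-edges first. More usefully, for each fixed "colour word" $w$ in the free monoid on $\{C_1,C_2\}$ of content $n$, there is a bijection between $\Lambda_i^n$ and the set of paths in the coloured graph spelling out $w$ (composable strings of coloured edges whose colour sequence is $w$). The key structural input I would isolate as a sub-lemma: \emph{the cardinality of $v\Lambda_i^n$ — indeed the whole combinatorial structure of "which edges can follow which" — depends only on the coloured graph, not on the factorisation rule.} This is because a path of degree $n$ corresponds bijectively to a composable string of coloured edges in \emph{any} chosen colour order, and composability in the coloured graph is skeleton data. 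Consequently, for each $v$ and $n$, there is a canonical bijection $\theta^{v,n}:v\Lambda_1^n\to v\Lambda_2^n$ (say, read each path as a string of edges with $C_1$'s first, then reinterpret that same string in $\Lambda_2$), and these bijections are compatible with range and source.

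Next I would check that the maps $\theta^{v,n}$ assemble into a bijection of boundary path spaces. A boundary path $x:\Omega_{2,m}\to\Lambda_i$ is a degree-preserving functor, which — by the factorisation property — is completely determined by its restriction to edges, i.e.\ by the two families $\{x(p,p+e_1)\}$ and $\{x(p,p+e_2)\}$ of $C_1$- and $C_2$-coloured edges, subject to composability. For $2$-graphs there are no finite exhaustive sets arising from "higher" constraints: a set $E\subseteq v\Lambda_i$ is exhaustive iff it is "exhaustive at the level of the two colours" (every edge out of $v$ in colour $C_1$ has a common extension with some element of $E$, and similarly for $C_2$), which again is skeleton data. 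Hence the boundary-path condition on $x$ is a condition on its edge data only, and $\theta$ sends boundary paths to boundary paths and back. So $\theta:\partial\Lambda_1\to\partial\Lambda_2$ is a bijection preserving ranges and the partial shift structure at the level of edges.

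Finally I would verify continuity. The cylinder sets $Z_{\Lambda_i}(\lambda\setminus G)$ form a basis for the topology on $\partial\Lambda_i$, and $\theta$ carries $Z_{\Lambda_1}(\lambda\setminus G)$ onto $Z_{\Lambda_2}(\theta(\lambda)\setminus\theta(G))$ because membership $x\in Z_{\Lambda_1}(\lambda)$ is exactly the statement $x(0,d(\lambda))=\lambda$, which is preserved by $\theta$; likewise $\theta$ respects the subtracted sets $Z_{\Lambda_1}(\lambda\nu)$ since $\theta$ is multiplicative on composable pairs at the edge level, and it sends finite nonexhaustive sets to finite nonexhaustive sets by the remark above. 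Thus $\theta$ is a homeomorphism. \emph{The main obstacle} I anticipate is the bookkeeping in the sub-lemma that "composability, cardinalities, and exhaustiveness of $v\Lambda_i^n$ are skeleton-only data": one must be careful that a path of degree $n$ is genuinely reconstructible from an \emph{arbitrary} colour-ordered string of edges and that the choice of colour order does not matter for defining $\theta$ — this uses the $2$-graph factorisation property applied repeatedly to move a single $C_2$-edge past a $C_1$-edge, and one should note that although the individual square-identifications differ between $\Lambda_1$ and $\Lambda_2$, the \emph{number} of ways to complete any partially-specified configuration is forced by bijectivity of those identifications and hence is the same. Everything after that sub-lemma is routine.
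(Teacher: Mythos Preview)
There is a genuine gap in how you pass from the finite-degree bijections $\theta^{v,n}$ to a map on boundary paths. Your $\theta^{v,n}: v\Lambda_1^n \to v\Lambda_2^n$ (read $\lambda$ as a $C_1$-first edge-string, reinterpret in $\Lambda_2$) are fine bijections for each fixed $n$, but they are \emph{not} compatible with taking initial segments: for $m \le n$, computing $\theta^{v,n}(\lambda)(0,m)$ requires commuting the first $m_2$ red edges past the last $n_1 - m_1$ blue edges using $\Lambda_2$'s factorisation rule, whereas $\theta^{v,m}(\lambda(0,m))$ performs that same commutation using $\Lambda_1$'s rule. When the two rules differ these disagree, so $n \mapsto \theta^{r(x),n}(x(0,n))$ is not a coherent family of initial segments and does not define a boundary path in $\Lambda_2$. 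The alternative you sketch --- transport the full edge-data $\{x(p,p+e_i)\}$ bodily from $\Lambda_1$ to $\Lambda_2$ --- fails for the same underlying reason: that data satisfies the commuting-square identity $x(p,p+e_1)\,x(p+e_1,p+e_1+e_2) = x(p,p+e_2)\,x(p+e_2,p+e_1+e_2)$ \emph{in $\Lambda_1$}, and in general the same four skeleton edges do not satisfy the corresponding identity in $\Lambda_2$, so they are not the edge-data of any functor $\Omega_{2,m} \to \Lambda_2$.

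What the paper does instead is record $x_1$ only along a \emph{single} edge-path in $\Omega_{2,m}$ from $0$ cofinally towards $m$, one in which colours alternate. A single path contains no squares, so there is no factorisation constraint to carry across: the resulting string of skeleton edges can be reinterpreted in $\Lambda_2$ and then extended, using $\Lambda_2$'s own factorisation property, to a unique functor $x_2: \Omega_{2,m} \to \Lambda_2$. That choice of a square-free ``staircase'' is the missing idea; once it is in place, the continuity argument (which in your write-up rests on $\theta(Z(\lambda)) = Z(\theta(\lambda))$, hence on the broken initial-segment compatibility) can be carried out along the lines you describe.
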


\begin{proof}
 Fix a boundary path $x_1: \Omega_{2,m} \to \Lambda_1$ of degree $m \in (\N \cup \{\infty\})^{2}$. Then $x_1$ is uniquely determined by a path in the skeleton of $\Omega_{2,m}$ such that the edges alternate between red and blue edges; this same path in the skeleton also uniquely determines a boundary path $x_2: \Omega_{2,m} \to \Lambda_2$ of degree $m$. A path $\lambda_1 \in \Lambda_1^m$ is identified with a path $\lambda_2 \in \Lambda_2^m$ in the same way. Thus there is a degree-preserving bijective map $\phi: \partial \Lambda_1 \cup \Lambda_1 \to \partial \Lambda_2 \cup \Lambda_2$ such that $\phi(\lambda_1 x)=\phi(\lambda_1)\phi(x)$ for $\lambda_1 \in \Lambda_1$ and $x \in \Lambda_1 \cup \partial \Lambda_1$ with $s(\lambda_1)=r(x)$. We check that $\phi$ is a homeomorphism. If $\lambda_1 \in \Lambda_1$ and $x_1, x_2 \in Z(\lambda_1)$, then $(\phi(x_1))(0,d(\lambda_1))=(\phi(x_2))(0,d(\lambda_1))$, so there exists $\lambda_2 \in \Lambda_2^{d(\lambda_1)}$ such that $\phi(Z_{\Lone}(\lambda_1)) \subseteq Z_{\Ltwo}(\lambda_2)$. If $G_1 \subseteq s(\lambda_1) \Lambda_1$ is a finite subset, then there is a finite subset $G_2 \subseteq s(\lambda_2) \Lambda_2$ such that $\phi(\cup_{\nu \in G_1} Z_{\Lone}( \lambda_1 \nu_1)) \subseteq \cup_{\nu_2 \in G_2} Z_{\Ltwo}(\lambda_2 \nu_2)$, so $\phi(Z_{\Lone}(\lambda_1 \setminus G_1)) \subseteq Z_{\Ltwo}(\lambda_2 \setminus G_2)$. Suppose that $G_1$ is nonexhaustive and choose $\alpha_1 \in s(\lambda_1)\Lone$ such that $\Lambda_1^{\operatorname{min}}(\alpha_1,\beta_1) = \emptyset$ for every $\beta_1 \in G_1$. To show that $G_2$ is nonexhaustive, suppose for a contradiction that there exist $\beta_2 \in G_2$ and $(\rho_2,\tau_2) \in \Lambda_2^{\operatorname{min}}(\phi(\alpha_1), \beta_2)$. Then $(\phi^{-1}(\rho_2),\phi^{-1}(\tau_2)) \in \Lambda_1^{\operatorname{min}}(\alpha_1,\beta_1)$, a contradiction. Finally, the same argument gives that, if $\mu_2 \in \Lambda_2$ and if $G_2 \subseteq s(\lambda_2) \Lambda_2$ is a finite nonexhaustive subset, then there is a $\mu_1 \in \Lambda_1$ and a finite nonexhaustive subset $G_1 \subseteq s(\lambda_1) \Lambda_1$ such that $\phi^{-1}(Z_{\Ltwo}(\mu_2 \setminus G_2)) \subseteq Z_{\Lone}(\mu_1 \setminus G_1)$.
\end{proof}

\begin{remark}\label{rmk:nonexample}
In \cite[Section~6]{KP} it is shown that there are $2$-graphs described by the same skeleton but different factorisation rules that are not isomorphic and whose $C^*$-algebras are not isomorphic. In particular, \cite[Example~6.1]{KP} presents $2$-graphs $\Lone$ and $\Ltwo$ described by the same skeleton such that $C^*(\Lone) \cong \mathcal{O}_2 \not \cong \mathcal{O}_2 \otimes C(\T) \cong C^*(\Ltwo)$. Thus there are $2$-graphs whose boundary path spaces are homeomorphic by Lemma~\ref{lem:bps} but are not continuously orbit equivalent by Corollary~\ref{conj:2}.
\end{remark}

\section{Eventual one-sided conjugacy}\label{sec:eventual}

In this section, we generalise the $1$-graph result \cite[Theorem~4.1]{CR1} by showing that two finitely-aligned $k$-graphs $\Lone$ and $\Ltwo$ are eventually one-sided conjugate if and only if the boundary path groupoids $\G_{\Lone}$ and $\G_{\Ltwo}$ are isomorphic by a groupoid isomorphism that preserves the standard cocycle.

Let $\Lambda$ be a finitely-aligned $k$-graph and let $R$ be a commutative ring with unit. There is a $\Z^k$-grading of $\KP_R(\Lambda)$ given by $\KP_R(\Lambda)_n=\{s_\mu s_\lambda^*:\mu,\nu\in\Lambda,\ d(\mu)-d(\nu)=n\},$ for $n\in\Z^k$, there is a gauge action $\gamma^{\Lambda}$ of $\T^k$ on $C^*(\Lambda)$ given by $\gamma_z^\Lambda(s_\lambda)=z^{d(\lambda)}s_\lambda,$ for $z\in\T^k$ and $\lambda\in\Lambda$, and there is a groupoid cocycle $c_\Lambda:\G_\Lambda\to\Z^k$ given by $c_\Lambda(x,n,y)=n$ for $(x,n,y)\in\G_\Lambda$.

\begin{definition}
Let $\Lone$ and $\Ltwo$ be finitely-aligned $k$-graphs. We say that $\bps_1$ and $\bps_2$ are \emph{eventually one-sided conjugate} if there is a homeomorphism $h:\partial\Lambda_1\to\partial\Lambda_2$ and for each $m \in \N^k$ there are continuous maps $f_m:\partial\Lambda_1^{\ge m}\to\N^k$ and $g_m:\partial\Lambda_2^{\ge m} \to\N^k$ satisfying
$$ \sigma^{f_m(x)}_{\Lambda_2}(h(\sigma_{\Lambda_1}^m(x)))=\sigma^{f_m(x)+m}_{\Lambda_2}(h(x))$$ for $x \in \bps_1^{\ge m}$, and 
$$\sigma^{g_m(y)}_{\Lambda_1}(h^{-1}(\sigma^m_{\Lambda_2}(y)))=\sigma^{g_m(y)+m}_{\Lambda_1}(h^{-1}(y)),$$ for $y\in\bps_2^{\ge m}$. We say that such a homeomorphism $h$ is an \emph{eventual one-sided conjugacy}.
\end{definition}

In particular, if $\bps_1$ and $\bps_2$ are \emph{conjugate}, in the sense that there is a homeomorphism $h: \bps_1 \to \bps_2$ such that $h \circ \sigma_{\Lambda_1}^m = \sigma_{\Lambda_2}^m \circ h$ for all $m \in \N^k$, then they are eventually conjugate.

\begin{theorem}\label{thm:eventual}
Let $\Lambda_1$ and $\Lambda_2$ be finitely-aligned $k$-graphs and let $R$ be a reduced indecomposable commutative ring with unit. The following are equivalent.
\begin{enumerate}
	\item There is an isomorphism $\phi:\G_{\Lambda_1}\to\G_{\Lambda_2}$ such that $c_{\Lambda_2}\circ\phi=c_{\Lambda_1}$.
	\item There is a diagonal-preserving isomorphism $\psi:C^*(\Lambda_1)\to C^*(\Lambda_2)$ such that $\gamma_z^{\Lambda_2}\circ\psi=\psi\circ\gamma_z^{\Lambda_1}$ for all $z\in\T^k$.
	\item There is a $\Z^k$-graded diagonal-preserving isomorphism from $\KP_R(\Lambda_1)$ onto \newline $\KP_R(\Lambda_2)$.
	\item There is a $\Z^k$-graded diagonal-preserving $*$-isomorphism from $\KP_R(\Lambda_1)$ onto $\KP_R(\Lambda_2)$.
	\item There is an eventual one-sided conjugacy $h: \bps_1 \to \bps_2$.
	\end{enumerate}
\end{theorem}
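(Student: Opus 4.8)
The plan is to deduce $(1)\iff(2)\iff(3)\iff(4)$ from Corollary~\ref{conj:1}, and to prove $(1)\iff(5)$ directly using the function $l_\Lambda$ of Lemma~\ref{lem:l} and the groupoid homomorphism of Lemma~\ref{lemma:1}. For the first block of equivalences I would apply Corollary~\ref{conj:1} with $\Gamma=\Z^k$ and $\eta_1=\eta_2=d$, the degree functor, so that $c_{\eta_i}=c_{\Lambda_i}$ and the $\Gamma$-grading of $\KP_R(\Lambda_i)$ induced by $\eta_i$ is the $\Z^k$-grading recalled at the start of this section; this immediately yields $(1)\iff(3)\iff(4)$. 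To match condition (2) of Corollary~\ref{conj:1} with condition (2) above, I would use that $\Z^k$ is abelian: a coaction of $C^*(\Z^k)$ corresponds to an action of $\widehat{\Z^k}=\T^k$, and under this correspondence the coaction $\delta_d$ (determined by $\delta_d(s_\lambda)=s_\lambda\otimes d(\lambda)$) becomes the gauge action $\gamma^\Lambda$ (determined by $\gamma^\Lambda_z(s_\lambda)=z^{d(\lambda)}s_\lambda$), with an isomorphism intertwining the two coactions if and only if it intertwines the corresponding $\T^k$-actions; hence Corollary~\ref{conj:1}(2) is exactly (2), giving $(1)\iff(2)$.

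For $(1)\implies(5)$, given a groupoid isomorphism $\phi\colon\G_{\Lambda_1}\to\G_{\Lambda_2}$ with $c_{\Lambda_2}\circ\phi=c_{\Lambda_1}$, I would first note that $\phi$ carries $\G_{\Lambda_1}^{0}$ onto $\G_{\Lambda_2}^{0}$, hence restricts to a homeomorphism $h\colon\partial\Lambda_1\to\partial\Lambda_2$; since $\phi$ intertwines range and source and preserves the cocycle, it follows that $\phi(x,n,y)=(h(x),n,h(y))$ for every $(x,n,y)\in\G_{\Lambda_1}$, and similarly $\phi^{-1}(x,n,y)=(h^{-1}(x),n,h^{-1}(y))$. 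For $m\in\N^k$ the map $x\mapsto(x,m,\sigma_{\Lambda_1}^m(x))$ from $\partial\Lambda_1^{\ge m}$ to $\G_{\Lambda_1}$ is continuous, so by Lemma~\ref{lem:l} the formula $f_m(x):=l_{\Lambda_2}(\phi(x,m,\sigma_{\Lambda_1}^m(x)))-m$ defines a continuous map $\partial\Lambda_1^{\ge m}\to\N^k$ — it lands in $\N^k$ because $l_{\Lambda_2}(h(x),m,h(\sigma_{\Lambda_1}^m(x)))\ge m$ by the definition of $l_{\Lambda_2}$ — which, again by the definition of $l_{\Lambda_2}$, satisfies $\sigma_{\Lambda_2}^{f_m(x)}(h(\sigma_{\Lambda_1}^m(x)))=\sigma_{\Lambda_2}^{f_m(x)+m}(h(x))$. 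Defining $g_m$ symmetrically from $\phi^{-1}$ and $l_{\Lambda_1}$ then exhibits $h$ as an eventual one-sided conjugacy.

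For $(5)\implies(1)$, I would observe that an eventual one-sided conjugacy $h$ with continuous maps $f_m,g_m$ is a continuous orbit equivalence with family of continuous cocycles $f_{m_1}$, $g_{m_1}:=f_{m_1}+m_1$, $i_{m_2}:=g_{m_2}$, $j_{m_2}:=g_{m_2}+m_2$: equations \eqref{eqn:orb1} and \eqref{eqn:orb2} are exactly the defining identities of an eventual conjugacy, while \eqref{eqn:cocycle1} and \eqref{eqn:cocycle2} hold because $g_{m_1}-f_{m_1}$ and $j_{m_2}-i_{m_2}$ are the identity maps of $\N^{k}$. Lemma~\ref{lemma:1} then gives a continuous groupoid homomorphism $\phi\colon\G_{\Lambda_1}\to\G_{\Lambda_2}$, and since $g_{m_1}-f_{m_1}=m_1$ the formula \eqref{eq:homo} collapses to $\phi(x,l,y)=(h(x),l,h(y))$; applying Lemma~\ref{lemma:1} to $h^{-1}$ with the family $\{i_{m_2},j_{m_2},f_{m_1},g_{m_1}\}$ produces a continuous groupoid homomorphism $\psi$ with $\psi(x,l,y)=(h^{-1}(x),l,h^{-1}(y))$, and since $\psi$ and $\phi$ are mutually inverse, $\phi$ is an isomorphism with $c_{\Lambda_2}\circ\phi=c_{\Lambda_1}$. (Alternatively, one can check directly that such an $h$ is a $\Z^k$-graded, period-preserving continuous orbit equivalence — using that $\Per(x)=\Per(h(x))$ for every $x\in\partial\Lambda_1$ — and then invoke Corollary~\ref{conj:1}.)

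The bulk of this is unwinding definitions; the step I expect to require genuine care is the continuity of $f_m$ and $g_m$ in $(1)\implies(5)$, which is precisely where Lemma~\ref{lem:l} (and, behind it, the basis of Lemma~\ref{lem:groupoidbasis}) is used, together with the easy but slightly fiddly fact that $x\mapsto(x,m,\sigma_{\Lambda_1}^m(x))$ is continuous on $\partial\Lambda_1^{\ge m}$. The other point needing attention is the passage between the $C^*(\Z^k)$-coaction of Corollary~\ref{conj:1} and the $\T^k$-gauge action in part (2), which is standard but should be made explicit.
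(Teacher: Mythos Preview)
Your proposal is correct and largely matches the paper. The equivalence of (1)--(4) is handled identically (Corollary~\ref{conj:1} with $\Gamma=\Z^k$, $\eta_i=d$), and your $(5)\Rightarrow(1)$ via Lemma~\ref{lemma:1} is the same computation the paper does directly: both verify that $(h(x),m-n,h(y))\in\G_{\Lambda_2}$ and that $\phi(x,l,y)=(h(x),l,h(y))$ is a well-defined isomorphism.

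The one genuine difference is in $(1)\Rightarrow(5)$. You define $f_m(x)=l_{\Lambda_2}(\phi(x,m,\sigma_{\Lambda_1}^m(x)))-m$ and read off continuity from Lemma~\ref{lem:l}; this mirrors the construction in the proof of Proposition~\ref{prop:h} and is clean. The paper instead works one $\lambda\in\Lambda_1^m$ at a time: it takes the compact open bisection $A=Z(Z_{\Lambda_1}(\lambda),m,0,Z_{\Lambda_1}(s(\lambda)))$, uses Lemma~\ref{lem:groupoidbasis} and compactness to write $\phi(A)=\bigcup_{i=1}^n Z(U_i,f_i+m,f_i,V_i)$ as a finite disjoint union, and sets $f_\lambda$ to be the locally constant function taking value $f_i$ on $h^{-1}(U_i)$. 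Your route is shorter and gives a canonical (though not locally constant) $f_m$; the paper's route avoids invoking Lemma~\ref{lem:l} and produces a locally constant $f_m$, at the cost of a slightly more hands-on covering argument. Either is fine.
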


\begin{proof} By letting $k_1=k=k_2$, $\Gamma=\Z^k$, and $\eta_1=d=\eta_2$, we obtain the equivalence of (1)-(4) from Corollary~\ref{conj:1}. We show that (1) and (5) are equivalent. Our proof follows the proof of the 1-graph case in \cite[Theorem~4.1]{CR1}.

(5) $\implies$ (1): Suppose $h: \bps_1 \to \bps_2$ is an eventual one-sided conjugacy. For each $(x,m-n,y) \in \G_{\Lone}$, define $\phi(x,m-n,y) := (h(x), m-n, h(y))$. To see that this gives a well-defined map $\phi: \G_{\Lone} \to \G_{\Ltwo}$, observe that \begin{align*} \sigma_{\Lambda_2}^{f^m(x)+f^n(y)+m}(h(x))&=\sigma_{\Lambda_2}^{f^m(x)+f^n(y)}(h(\sigma^m(x)))\\&=\sigma_{\Lambda_2}^{f^m(x)+f^n(y)}(h(\sigma^n(y)))=\sigma_{\Lambda_2}^{f^m(x)+f^n(y)+n}(h(y)), \end{align*} so $(h(x),m-n,h(y))\in\mathcal{G}_{\Lambda_2}$. It is routine to check that $\phi$ is an isomorphism and it is clear that $c_{\Ltwo} \circ \phi = c_{\Lone}$.

(1) $\implies$ (5): Suppose $\phi:\G_{\Lambda_1} \to\G_{\Lambda_2}$ is an isomorphism such that $c_{\Lambda_2}(\phi(\eta))=c_{\Lambda_1}(\eta)$ for $\eta\in\G_{\Lambda_1}$. Then the restriction of $\phi$ to $\G_{\Lambda_1}^0$ is a homeomorphism onto $\G_{\Lambda_2}^0$. Since the map $x\mapsto (x,0,x)$ is a homeomorphism from $\partial \Lambda_1$ onto $\G_{\Lambda_1}^0$, and $y\mapsto (y,0,y)$ is a homeomorphism from $\partial \Lambda_2$ onto $\G_{\Lambda_2}^0$, it follows that there is a homeomorphism $h:\partial \Lambda_1 \to\partial \Lambda_2$ such that $\phi((x,0,x))=(h(x),0,h(x))$ for all $x\in\partial \Lambda_1$. Since $c_{\Lambda_2}(\phi(\eta))=c_{\Lambda_2}(\eta)$ for all $\eta \in \G_{\Lambda_1}$, it follows that $\phi((x,n,y))=(h(x),n,h(y))$ for all $(x,n,y) \in \G_{\Lambda_1}$. Fix $m \in \N^k$ and $\lambda \in \Lambda_1^m$. Then $A:=Z\big(Z_{\Lone}(\lambda), m, 0, Z_{\Lone}(s(\lambda))\big)$ is a compact open subset of $c_{\Lone}^{-1}(m)$, so $\phi(A)$ is a compact open subset of $c_{\Ltwo}^{-1}(m)$. It follows from Lemma~\ref{lem:groupoidbasis}, that there exist an $n \in \N$, mutually disjoint open subsets $U_1,\dots,U_n$ of $\bps_2$, mutually disjoint open subsets $V_1,\dots,V_n$ of $\bps_2$, and $f_1,\dots,f_n\in\mathbb{N}^k$ such that $$\phi(A)=\bigcup_{i=1}^n Z(U_i,f_i+m,f_i,V_i).$$ Define $f_\lambda:Z(\lambda)\to\mathbb{N}^k$ by $f_\lambda(x)=f_i$ for $x\in h^{-1}(U_i)$. Then $f_\lambda$ is continuous and $\sigma_{\Ltwo}^{f_\lambda(x)}(h(\sigma_{\Lone}^m(x)))=\sigma_{\Ltwo}^{f_\lambda(x)+m}(h(x))$ for $x\in Z(\lambda)$. By repeating this construction for each $\lambda \in \Lambda_1^m$, we get a continuous map $f_m:\bps_1^{\ge m} \to \mathbb{N}^k$ such that $\sigma_{\Ltwo}^{f_m(x)}(h(\sigma_{\Lone}^m(x)))=\sigma_{\Ltwo}^{f_m(x)+m}(h(x))$ for all $x\in \bps_1^{\ge m}$. Applying the same construction gives a continuous map $g_m:\bps_2^{\ge m} \to\mathbb{N}$ such that $\sigma_{\Lone}^{g_m(y)}(h^{-1}(\sigma_{\Ltwo}^m(y)))=\sigma_{\Lone}^{g_m(y)+m}(h^{-1}(y))$ for all $y\in \bps_2^{\ge m}$. Hence $h$ is an eventual one-sided conjugacy. \end{proof}

\section{Stable isomorphism}\label{sec:stable}

In this section, we characterise stable isomorphism of the $C^*$-algebras and Kumjian--Pask algebras of finitely-aligned $k$-graphs in terms of isomorphisms of boundary path groupoids by applying results about groupoid $C^*$-algebras and Steinberg algebras \cite{CR2, CRS,CRST}. We first study the stabilisation of $k$-graphs (see \cite{T2} for the $1$-graph case).

\begin{definition} Let $\Lambda$ be a finitely-aligned $k$-graph. The \emph{stabilisation of} $\Lambda$, denoted by $S\Lambda$, is the $k$-graph obtained by attaching a copy of $\Omega_{k,\infty}$ to each vertex. We identify each $v\in\Lambda^0$ with the vertex $(0,0)$ in the copy of $\Omega_{k,\infty}$ corresponding to $v$.
\end{definition}

Denote by $\R$ the full countable equivalence relation $\R = \N \times \N$, regarded as a discrete principal groupoid with unit space $\N$. Similarly, for $k \in \N$, denote by $\R_k$ the full countable equivalence relation $\R_k = \N^k \times \N^k$, regarded as a discrete principal groupoid with unit space $\N^k$.

\begin{lemma}\label{lem:stablegroupoid} 
Let $\Lambda$ be a finitely-aligned $k$-graph. Then $\G_{S \Lambda} \cong \G_\Lambda \times \R$. 
\end{lemma}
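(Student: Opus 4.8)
The plan is to construct an explicit isomorphism $\G_{S\Lambda} \to \G_\Lambda \times \R$ by unpacking what boundary paths of $S\Lambda$ look like. Since $S\Lambda$ is obtained by attaching a copy of $\Omega_{k,\infty}$ at each vertex $v\in\Lambda^0$, a boundary path of $S\Lambda$ is either a boundary path that lives entirely in one of the attached copies of $\Omega_{k,\infty}$ (equivalently, an element of $\bps(\Omega_{k,\infty})$ together with a label $v\in\Lambda^0$ recording which copy it sits in), or else it is of the form $\tau z$ where $\tau$ lies in a copy of $\Omega_{k,\infty}$ and $z\in s(\tau)\bps\Lambda$; but the second case is absorbed into the first once one notes that $\Omega_{k,\infty}$ has no sinks so every such path continues, and in fact every boundary path of $S\Lambda$ has infinite degree. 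Concretely I would set up a bijection between $\bps(S\Lambda)$ and (a suitable relabelling of) $\bps\Lambda \times \N$: each boundary path of $S\Lambda$ decomposes uniquely as a finite ``detour'' through one of the $\Omega_{k,\infty}$ copies followed by entry into $\bps\Lambda$ at some vertex, and the combinatorial data of that detour is exactly an element of the unit space $\N$ of $\R$ (after fixing, once and for all, a bijection between $\N$ and the relevant indexing set of entry points). One must check this is a homeomorphism, which is routine: the basic compact open sets $Z_{S\Lambda}(\lambda\setminus G)$ pull back to products of basic sets, using that $S\Lambda$ is finitely aligned with the same $\Lambda^{\min}$-data as $\Lambda$ on paths that meet $\Lambda$.

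Next I would extend this to the groupoid level. An element $(x,n,y)\in\G_{S\Lambda}$ consists of two boundary paths of $S\Lambda$ that are shift-tail-equivalent; writing $x$ and $y$ in the decomposed form above, shift-tail-equivalence forces them to eventually agree once past their respective detours, so they enter $\bps\Lambda$ at a common tail. This gives a triple in $\G_\Lambda$ (the common $\Lambda$-tail data, with the shift displacement recording $n$ adjusted by the lengths of the two detours — but since the $\Omega_{k,\infty}$ detours are themselves shift-tail-equivalent paths in $\Omega_{k,\infty}$, this is bookkeeping that works out) together with a pair in $\R = \N\times\N$ recording the two detour-labels. I would verify this assignment is a groupoid homomorphism (compatible with the partially-defined products, inverses, range and source), that it is bijective, and that it is a homeomorphism by checking it on the basic bisections $Z_{S\Lambda}(\lambda\ast_s\mu\setminus G)$ and $Z_\Lambda(\lambda\ast_s\mu\setminus G)\times\{(i,j)\}$; Lemma~\ref{lem:groupoidbasis} is convenient here since the sets $Z(U,m,n,V)$ behave well under the decomposition.

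The main obstacle I anticipate is the careful identification of boundary paths of $S\Lambda$: one has to be precise about the claim that every boundary path of $S\Lambda$ factors as ``(finite or infinite path in exactly one $\Omega_{k,\infty}$-copy) followed by (boundary path of $\Lambda$ or termination)'', and that this factorisation is unique and respects the shift. In particular one must handle the boundary-path exhaustive-set condition: a finite exhaustive subset $E\subset x(n,n)(S\Lambda)$ at a vertex of an attached $\Omega_{k,\infty}$ can only involve edges of that $\Omega_{k,\infty}$ (since $\Omega_{k,\infty}$-vertices other than the basepoint receive nothing from $\Lambda$), so the boundary-path condition for $S\Lambda$ reduces to that of $\Omega_{k,\infty}$ concatenated with that of $\Lambda$. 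Once this structural description is nailed down, the homeomorphism and groupoid-homomorphism checks are mechanical, and the proof amounts to assembling these verifications; I would streamline by invoking that an isomorphism of ample groupoids need only be checked to be a continuous, open, bijective groupoid homomorphism, and verifying each clause on the respective bases.
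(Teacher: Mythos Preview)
Your overall strategy matches the paper's: identify $\partial(S\Lambda)$ with $\partial\Lambda\times\N^k$ via the decomposition of each boundary path as a finite prefix in an attached $\Omega_{k,\infty}$-copy followed by a boundary path of $\Lambda$, check this is a homeomorphism on basic sets, and then verify that the induced map on groupoids is an isomorphism. The paper writes the isomorphism explicitly as $((x,m,y),(p,q))\mapsto(\mu_{p,r(x)}x,\,m+p-q,\,\mu_{q,r(y)}y)$ from $\G_\Lambda\times\R_k$ to $\G_{S\Lambda}$, where $\mu_{n,v}$ is the unique degree-$n$ path in the copy at $v$ with source $v$, and then observes $\R_k\cong\R$ via any bijection $\N^k\to\N$; you fold that last step in from the start.

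There is, however, a genuine point of confusion in your account of $\partial(S\Lambda)$ that you must resolve before the argument goes through. You first allow boundary paths that ``live entirely in one of the attached copies of $\Omega_{k,\infty}$'' and say the $\tau z$ case is ``absorbed into'' that one; later you correctly state that every boundary path is a \emph{finite} detour followed by entry into $\partial\Lambda$; and in your ``main obstacle'' paragraph you hedge again with ``(finite or infinite path \dots) followed by (boundary path of $\Lambda$ or termination)''. These are incompatible, and only the middle version is right. The attachment is oriented so that within each copy the basepoint $0=v$ is a sink: from a non-basepoint vertex the only paths head toward $0$, and from $0=v$ one can only continue into $\Lambda$ (since $v(S\Lambda)=v\Lambda$). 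Hence no boundary path of $S\Lambda$ remains in an $\Omega_{k,\infty}$-copy and there is no ``termination'' case. If one attaches $\Omega_{k,\infty}$ with the opposite orientation, so that paths from $v$ can escape to infinity inside the copy, then $\partial(S\Lambda)$ is not $\partial\Lambda\times\N^k$ and the lemma is false for that construction. Once you fix the orientation and record that $\partial(S\Lambda)=\{\mu_{n,r(x)}x:x\in\partial\Lambda,\ n\in\N^k\}$, the remaining checks are indeed mechanical, exactly as in the paper.
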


\begin{proof} 
We first show that $\G_{S\Lambda} \cong \G_\Lambda\times\R_k$ and then show that $\R_k \cong \R$. Note that for each $n\in\mathbb{N}^k$, there is a unique $\mu_n\in \Omega_{k,\infty}$ such that $r(\mu_n)=(n,n)$ and $s(\mu_n)=(0,0)$. For each $v \in \Lambda^0$, write $\mu_{n,v}$ for $\mu_n$ in the copy of $\Omega_{k,\infty}$ attached to $v$. Then $\partial (S\Lambda) = \{ \mu_{n,r(x)} x: x \in \bps, n \in \N^k \}$ and has a basis consisting of compact open sets of the form $Z_{S\Lambda}(\mu_{n,r(\lambda)} \lambda \setminus G)$. The map $\phi: \bps \times \N^k \to \partial (S \Lambda)$ defined by $\phi((x,n))=\mu_{n,r(x)} x$ restricts to a continuous bijection of $Z_{S\Lambda}(\mu_{n,r(\lambda)} \lambda \setminus G)$ onto the compact open set $Z_\Lambda(\lambda \setminus G) \times \{ n \}$, so $\phi$ is a homeomorphism. It is routine to check that $$((x,m,y),(p,q))\mapsto (\mu_{p,r(x)} x,m+p-q,\mu_{q,r(y)} y)$$ defines a groupoid isomorphism from $\G_\Lambda \times \R_k$ to $\G_{S\Lambda}$. To see that $\mathcal{R}_k \cong \mathcal{R}$, let $\phi$ be any bijection from $\mathbb{N}^k$ to $\mathbb{N}$. Then $(m,n)\mapsto (\phi(m),\phi(n))$ is an isomorphism from $\mathcal{R}_k$ to $\mathcal{R}$. 
\end{proof}

We denote by $\K$ the compact operators on $\ell^2(\N)$, and by $\mathcal{C}$ the maximal abelian subalgebra of $\K$ consisting of diagonal operators. For a commutative ring $R$ with unit, we denote by $M_\infty(R)$ the ring of finitely supported, countable infinite square matrices over $R$, and by $D_\infty(R)$ the abelian subring of $M_\infty(R)$ consisting of diagonal matrices.

\begin{remark}
Let $\Lambda$ be a finitely-aligned $k$-graph and $R$ a commutative ring with unit. It follows from Lemma~\ref{lem:stablegroupoid} that $C^*(S \Lambda) \cong C^*(\Lambda) \otimes \K$ and $\KP_R(S \Lambda) \cong \KP_R(\Lambda) \otimes M_\infty(R)$. Hence the class of $k$-graph $C^*$-algebras and the class of Kumjian--Pask algebras are closed under stabilisation.
\end{remark}

We now apply results of \cite{CR2, CRS,CRST}. We first recall some definitions. Let $\G$ be an ample groupoid with unit space $\G^{0}$. If $X$ is a subset of $\G^{0}$, then we let $\G|_X:=s^{-1}(X)\cap r^{-1}(X)$, and say that $X$ is \emph{$\G$-full} (or just \emph{full}) if $r(s^{-1}(X))=\G^{0}$. Two ample groupoids $\G_1$ and $\G_2$ are \emph{Kakutani equivalent} if, for $i=1,2$, there is a $\G_i$-full clopen subset $X_i\subseteq\G_i^{0}$ such that $\G_1|_{X_1}$ and $\G_2|_{X_2}$ are isomorphic (see \cite{CRS} and \cite{Matui}). We say that $\G_1$ and $\G_2$ are \emph{groupoid equivalent} if there is a $\G_1$--$\G_2$ equivalence in the sense of \cite[Definition 2.1]{MRW}.

For a finitely-aligned $k_1$-graph and a finitely-aligned $k_2$-graph, we say that an isomorphism $\phi:C^*(\Lone)\otimes\mathcal{K}\to C^*(\Ltwo)\otimes\mathcal{K}$ is \emph{diagonal-preserving} if $\phi(\mathcal{D}(\Lone)\otimes\mathcal{C})=\mathcal{D}(\Ltwo)\otimes\mathcal{C}$. Likewise, we say that an isomorphism $\phi:\KP_R(\Lone)\otimes M_\infty(R)\to \KP_R(\Ltwo)\otimes M_\infty(R)$ is \emph{diagonal-preserving} if $\phi(D_R(\Lone)\otimes D_\infty(R))=D_R(\Ltwo)\otimes D_\infty(R)$.

For a ring $A$, we denote by $M(A)$ the \emph{multiplier ring} of $A$ (see for example \cite{AP}).

\begin{corollary}\label{cor:kakutani}
Let $R$ be a reduced indecomposable commutative ring with unit, let $\Lambda_1$ be a finitely-aligned $k_1$-graph, and let $\Lambda_2$ a finitely-aligned $k_2$-graph. The following are equivalent.
\begin{enumerate}
	\item The groupoids $\G_{S\Lambda_1}$ and $\G_{S\Lambda_2}$ are isomorphic.				
	\item The groupoids $\G_{\Lambda_1}\times\R$ and $\G_{\Lambda_2}\times\R$ are isomorphic.
	\item The groupoids $\G_{\Lambda_1}$ and $\G_{\Lambda_2}$ are Kakutani equivalent.
	\item The groupoids $\G_{\Lambda_1}$ and $\G_{\Lambda_2}$ are groupoid equivalent.
	\item There is a diagonal-preserving isomorphism from $C^*(S\Lambda_1)$ onto $C^*(S\Lambda_2)$.
	\item There is a diagonal-preserving ring-isomorphism from $\KP_R(S\Lambda_1)$ onto $\KP_R(S\Lambda_2)$.
	\item There is a diagonal-preserving $*$-algebra-isomorphism from $\KP_R(S\Lambda_1)$ onto $\KP_R(S\Lambda_2)$.
	\item There is a period-preserving continuous orbit equivalence $h:\partial (S\Lambda_1)\to \partial (S\Lambda_2)$.
	\item There is a diagonal-preserving $*$-isomorphism from $C^*(\Lambda_1)\otimes\K$ onto $C^*(\Lambda_2)\otimes K$.
	\item For $i=1,2$, there is a projection $p_i\in M(D(\Lambda_i))$ such that $p_i$ is full in $C^*(\Lambda_i)$ and an isomorphism from $\psi:p_1C^*(\Lambda_1)p_1\to p_2C^*(\Lambda_2)p_2$ such that $\psi(p_1 D(\Lambda_1))=p_2 D(\Lambda_2)$. 				\item There is a diagonal-preserving ring-isomorphism from $\KP_R(\Lambda_1)\otimes M_\infty(R)$ onto \newline $\KP_R(\Lambda_2)\otimes M_\infty(R)$.
	\item For $i = 1,2$, there is an idempotent $p_i\in M(D_R(\Lambda_i))$ such that $p_i$ is full in $\KP_R(\Lambda_i)$, and a ring-isomorphism from $\psi:p_1\KP_R(\Lambda_1)p_1\to p_2\KP_R(\Lambda_2)p_2$ such that $\psi(p_1 D_R(\Lambda_1))=p_2 D_R(\Lambda_2)$.
	\item There is a diagonal-preserving $*$-algebra-isomorphism from $\KP_R(\Lambda_1)\otimes M_\infty(R)$ onto $\KP_R(\Lambda_2)\otimes M_\infty(R)$.
	\item For $i=1,2$, there is a projection $p_i\in M(D_R(\Lambda_i))$ such that $p_i$ is full in $\KP_R(\Lambda_i)$ and a $*$-algebra-isomorphism $ \psi:p_1\KP_R(\Lambda_1)p_1\to p_2\KP_R(\Lambda_2)p_2$ such that \newline $\psi(p_1 D_R(\Lambda_1))=p_2 D_R(\Lambda_2)$.
\end{enumerate}
\end{corollary}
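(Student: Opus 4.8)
The strategy is to reduce all fourteen conditions to facts about the boundary path groupoids and then invoke the results already in hand: the groupoid decomposition of Lemma~\ref{lem:stablegroupoid}, the characterisations of Corollary~\ref{conj:2} applied to the stabilisations, and the groupoid-theoretic results of \cite{CR2,CRS,CRST}. First I would note that $S\Lambda_1$ and $S\Lambda_2$ are again finitely-aligned $k$-graphs: a minimal common extension of two paths of $S\Lambda$ either lies in a single attached copy of $\Omega_{k,\infty}$, where it is unique, or its image under the canonical projection $S\Lambda\to\Lambda$ is a minimal common extension in $\Lambda$, so each $(S\Lambda)^{\mn}(\cdot,\cdot)$ is finite. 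Thus Lemma~\ref{lem:stablegroupoid}, the remark following it, and Corollary~\ref{conj:2} all apply to $S\Lambda_1$ and $S\Lambda_2$. Lemma~\ref{lem:stablegroupoid} gives $\G_{S\Lambda_i}\cong\G_{\Lambda_i}\times\R$, which is exactly the equivalence of (1) and (2).

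Applying Corollary~\ref{conj:2} to the pair $S\Lambda_1,S\Lambda_2$ establishes the equivalence of (1), (5), (6), (7) and (8) simultaneously, since (1) is the groupoid isomorphism $\G_{S\Lambda_1}\cong\G_{S\Lambda_2}$, (5) the diagonal-preserving $*$-isomorphism of $C^*(S\Lambda_i)$, (6) and (7) the diagonal-preserving ring- and $*$-algebra-isomorphisms of $\KP_R(S\Lambda_i)$, and (8) the period-preserving continuous orbit equivalence of $\partial(S\Lambda_1)$ and $\partial(S\Lambda_2)$.

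Next I would pass, in a diagonal-preserving way, between $S\Lambda_i$ and the tensor products $C^*(\Lambda_i)\otimes\K$ and $\KP_R(\Lambda_i)\otimes M_\infty(R)$. The isomorphism $\G_{S\Lambda_i}\cong\G_{\Lambda_i}\times\R$ induces $C^*(\G_{S\Lambda_i})\cong C^*(\G_{\Lambda_i})\otimes C^*(\R)\cong C^*(\G_{\Lambda_i})\otimes\K$ carrying $C_0(\G_{S\Lambda_i}^0)$ onto $C_0(\G_{\Lambda_i}^0)\otimes\mathcal{C}$; composing with the isomorphism $C^*(\Lambda_i)\cong C^*(\G_{\Lambda_i})$ taking $\D(\Lambda_i)$ onto $C_0(\G_{\Lambda_i}^0)$ recalled in the preliminaries, this upgrades the remark after Lemma~\ref{lem:stablegroupoid} to a diagonal-preserving identification $C^*(S\Lambda_i)\cong C^*(\Lambda_i)\otimes\K$ sending $\D(S\Lambda_i)$ onto $\D(\Lambda_i)\otimes\mathcal{C}$, which gives (5)$\iff$(9). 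The identical argument with $A_R(\G_{S\Lambda_i})\cong A_R(\G_{\Lambda_i})\otimes M_\infty(R)$ and $A_R(\G_{S\Lambda_i}^0)\cong A_R(\G_{\Lambda_i}^0)\otimes D_\infty(R)$ gives (6)$\iff$(11) and (7)$\iff$(13).

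Finally, the remaining equivalences come from the groupoid-theoretic results of \cite{CR2,CRS,CRST}. Since each $\G_{\Lambda_i}$ is an ample, Hausdorff, second-countable groupoid, the fact that $\G_{\Lambda_1}\times\R\cong\G_{\Lambda_2}\times\R$ is equivalent to Kakutani equivalence and to groupoid equivalence of $\G_{\Lambda_1}$ and $\G_{\Lambda_2}$ (see \cite{CRS}) gives (2)$\iff$(3)$\iff$(4); and the characterisation of diagonal-preserving stable isomorphisms of groupoid $C^*$-algebras and Steinberg algebras in terms of isomorphisms of full corners, transported through the diagonal-preserving identifications $C^*(\Lambda_i)\otimes\K\cong C^*(\G_{\Lambda_i})\otimes\K$ and $\KP_R(\Lambda_i)\otimes M_\infty(R)\cong A_R(\G_{\Lambda_i})\otimes M_\infty(R)$ above, gives (9)$\iff$(10), (11)$\iff$(12) and (13)$\iff$(14). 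I expect the main obstacle to be these corner characterisations: one must arrange that the full projections (respectively idempotents) produced from a diagonal-preserving stable isomorphism lie in $M(\D(\Lambda_i))$ (respectively $M(D_R(\Lambda_i))$) and are full in $C^*(\Lambda_i)$ (respectively $\KP_R(\Lambda_i)$), and that the resulting corner isomorphism preserves the diagonal — this is where the technical statements of \cite{CRS} must be carried over from the graph-algebra setting to the $k$-graph setting, the groupoid picture serving as the bridge. The verification that $S\Lambda$ is finitely-aligned and that each tensor-splitting is genuinely diagonal-preserving are the other, routine, points requiring care.
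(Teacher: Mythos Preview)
Your proposal is correct and follows essentially the same route as the paper: Lemma~\ref{lem:stablegroupoid} for (1)$\iff$(2), Corollary~\ref{conj:2} applied to the pair $S\Lambda_1,S\Lambda_2$ for (1)$\iff$(5)--(8), and \cite{CRS} for (2)$\iff$(3)$\iff$(4). The only organisational difference is in how you reach (9)--(14). You pass through (5)--(7) via an explicit diagonal-preserving identification $C^*(S\Lambda_i)\cong C^*(\Lambda_i)\otimes\K$, $\KP_R(S\Lambda_i)\cong\KP_R(\Lambda_i)\otimes M_\infty(R)$, and then treat the corner characterisations (10), (12), (14) as a separate step; the paper instead derives (9)--(10) directly from (2)--(4) by invoking \cite[Corollary~11.3]{CRST}, and (11)--(14) directly from (2)--(4) by \cite[Proposition~2.1 and Theorem~5.7]{S2} together with an argument as in \cite[Corollary~3.12]{CR2}. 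Those cited results already package the stable-isomorphism and full-corner statements together, so what you flag as the ``main obstacle'' --- arranging that the full projections/idempotents live in $M(\D(\Lambda_i))$ and that the corner isomorphism is diagonal-preserving --- is precisely what they handle, and the paper simply appeals to them rather than reproving that step. Your added remark that $S\Lambda$ is again finitely-aligned is a point the paper leaves implicit.
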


\begin{proof} 
The equivalence of (1) and (2) follows from Lemma~\ref{lem:stablegroupoid}. The equivalence of (2)-(4) is an application of \cite[Theorem~3.2]{CRS}. The equivalence of (1) and (5)-(8) follows from Corollary~\ref{conj:2}. For $i=1,2$ and $x\in \bps_i$, we have that $(\G_{\Lambda_i})_x^x$ is either trivial or isomorphic to $\Z^{k_i}$, so the equivalence of (2)-(4) and (11)-(14) follows from \cite[Proposition 2.1 and Theorem 5.7]{S2} and an argument similarly to the one used to prove \cite[Corollary~3.12]{CR2}, and the equivalence of (2)-(4) and (9)-(10) is an application of \cite[Corollary~11.3]{CRST}.
\end{proof}

\section{Two-sided conjugacy}\label{sec:twosided}

In this section, we generalise the $1$-graph result \cite[Theorem~5.1]{CR1} by showing that the two-sided multi-dimensional shift spaces associated to row-finite $k$-graphs $\Lone$ and $\Ltwo$ with finitely many vertices and no sinks or sources are conjugate if and only if the boundary path groupoids $\G_{S\Lone}$ and $\G_{S\Ltwo}$ are isomorphic by a cocycle-preserving isomorphism.

Let $\Lambda$ be a finitely-aligned $k$-graph, let $\Gamma$ be a discrete group, and let $\eta:\Lambda\to\Gamma$ be a functor. If $\lambda\in S\Lambda$, then either $\lambda\in\Lambda$, or there is a unique $\lambda_1\in\Lambda$ and a unique $\lambda_2$ in the copy of $\Omega_{k,\infty}$ corresponding to the vertex $r(\lambda_1)$ such that $\lambda=\lambda_2\lambda_1$. We write $\eta_\Lambda$ for the functor from $S\Lambda$ to $\Z^k$ given by $\eta_\Lambda(\lambda)=d(\lambda)$ for $\lambda\in\Lambda$, and $\eta_\Lambda(\lambda)=d(\lambda_1)$ for $\lambda\in S\Lambda\setminus\Lambda$.

Let $\overline{\Omega}_k$ be the $k$-graph $\overline{\Omega}_k := \{(m,n)\in\Z^k\times\Z^k:m\le n\}$ with degree map $d:\overline{\Omega}_k\to\N^k$ defined by $d(m,n)=n-m$, and range and source maps $r,s$ defined by $r(m,n)=(m,m)$ and $s(m,n)=(n,n)$. If $\Lambda$ is row-finite and has finitely many vertices and no sinks or sources, then we write $\X_\Lambda$ for the space of all $k$-graph morphisms from $\overline{\Omega}_k$ to $\Lambda$. We equip $\X_\Lambda$ with the topology generated by subsets of the form $$Z((m,n),\lambda) := \{x\in \X_\Lambda:x(m,n)=\lambda\}$$ where $(m,n)\in \overline{\Omega}_k$ and $\lambda\in\Lambda^{n-m}$. For $m\in\Z^k$, we denote by $\tsh_\Lambda^m$ the homeomorphism from $\X_\Lambda$ to itself given by $$\tsh_\Lambda^m(x)(p,q)=x(p+m,q+m)$$ for $x\in\X_\Lambda$ and $(p,q)\in\overline{\Omega}_k$.

If $\G$ is a groupoid, $\Gamma$ is a discrete group, and $c:\G\to\Gamma$ is a cocycle, then we write $\bar{c}$ for the cocycle from $\G\times\R$ to $\Gamma$ given by $\bar{c}(\zeta_1,\zeta_2)=c(\zeta_1)$.

\begin{theorem}\label{thm:conjugacy}
Let $\Lambda_1$ and $\Lambda_2$ be finitely-aligned $k$-graphs and let $R$ be a reduced indecomposable commutative ring with unit. The following are equivalent.
\begin{enumerate}
	\item There is an isomorphism $\phi:\G_{S\Lambda_1}\to\G_{S\Lambda_2}$ such that $c_{\eta_{\Lambda_2}}\circ\phi=c_{\eta_{\Lambda_1}}$.
	\item There is an isomorphism $\phi:\G_{\Lambda_1}\times\R\to\G_{\Lambda_2}\times\R$ such that $\bar{c}_{\Lambda_2}\circ\phi=\bar{c}_{\Lambda_1}$.
	\item There is a diagonal-preserving $*$-isomorphism $\psi:C^*(\Lambda_1)\otimes\K\to C^*(\Lambda_2)\otimes\K$ such that $(\gamma_z^{\Lambda_2}\otimes\id)\circ\psi=\psi\circ(\gamma_z^{\Lambda_1}\otimes\id)$ for all $z\in\T^k$.
	\item There is a $\Z^k$-graded diagonal-preserving ring-isomorphism $\psi:\KP_R(\Lambda_1)\otimes M_\infty(R)\to\KP_R(\Lambda_2)\otimes M_\infty(R)$. 
	\item There is a $\Z^k$-graded diagonal-preserving $*$-algebra-isomorphism $\psi:\KP_R(\Lambda_1)\otimes M_\infty(R) \to\KP_R(\Lambda_2)\otimes M_\infty(R)$. 
\end{enumerate}
Moreover, if $\Lambda_1$ and $\Lambda_2$ are row-finite and have finitely many vertices and no sinks or sources, then (1)-(5) are equivalent to the following.
\begin{enumerate}
	\item[(6)] There is a homeomorphism $h:\X_{\Lambda_1}\to\X_{\Lambda_2}$ such that $\tsh^m_{\Lambda_2}(h(x))=h(\tsh_{\Lambda_1}^m(x))$ for all $m\in\N^k$ and all $x\in\X_{\Lambda_1}$.
\end{enumerate}
\end{theorem}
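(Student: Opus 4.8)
\emph{Equivalence of (1)--(5).} The plan is to obtain these from Corollary~\ref{conj:1} by applying it with $\Gamma=\Z^k$, the finitely-aligned $k$-graphs $S\Lambda_1$ and $S\Lambda_2$, and the functors $\eta_{\Lambda_1}$ and $\eta_{\Lambda_2}$. This gives the equivalence of: an isomorphism $\phi\colon\G_{S\Lambda_1}\to\G_{S\Lambda_2}$ with $c_{\eta_{\Lambda_2}}\circ\phi=c_{\eta_{\Lambda_1}}$ (which is (1)); a diagonal-preserving isomorphism $C^*(S\Lambda_1)\to C^*(S\Lambda_2)$ intertwining the coactions $\delta_{\eta_{\Lambda_i}}$; a $\Z^k$-graded diagonal-preserving ring-isomorphism $\KP_R(S\Lambda_1)\to\KP_R(S\Lambda_2)$; a $\Z^k$-graded diagonal-preserving $*$-algebra-isomorphism $\KP_R(S\Lambda_1)\to\KP_R(S\Lambda_2)$; and a $\Z^k$-graded period-preserving continuous orbit equivalence $h\colon\partial(S\Lambda_1)\to\partial(S\Lambda_2)$ (graded for $\eta_{\Lambda_1},\eta_{\Lambda_2}$), which I will keep for the last part. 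It then remains to translate these statements through the identifications $\G_{S\Lambda}\cong\G_\Lambda\times\R$ of Lemma~\ref{lem:stablegroupoid}, $C^*(S\Lambda)\cong C^*(\Lambda)\otimes\K$, and $\KP_R(S\Lambda)\cong\KP_R(\Lambda)\otimes M_\infty(R)$ from Section~\ref{sec:stable}. Under these the diagonals correspond as recorded there ($\D(S\Lambda)\leftrightarrow\D(\Lambda)\otimes\mathcal{C}$ and $D_R(S\Lambda)\leftrightarrow D_R(\Lambda)\otimes D_\infty(R)$); a short computation with the groupoid isomorphism of Lemma~\ref{lem:stablegroupoid}, using that $\eta_\Lambda$ kills the $\Omega_{k,\infty}$-tails, shows that $c_{\eta_\Lambda}$ corresponds to $\bar c_\Lambda$; for the same reason $\delta_{\eta_\Lambda}$-intertwining isomorphisms of $C^*(S\Lambda_i)$ correspond to $(\gamma^{\Lambda_i}\otimes\id)$-equivariant isomorphisms of $C^*(\Lambda_i)\otimes\K$, and the $\eta_\Lambda$-grading of $\KP_R(S\Lambda)$ corresponds to the grading of $\KP_R(\Lambda)\otimes M_\infty(R)$ coming from the $\Z^k$-grading of $\KP_R(\Lambda)$. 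This delivers (2)--(5).

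\emph{The condition (6).} Assume in addition that $\Lambda_1,\Lambda_2$ are row-finite with finitely many vertices and no sinks or sources, so each $\X_{\Lambda_i}$ is a compact, Hausdorff, totally disconnected space carrying the $\Z^k$-action $\tsh_{\Lambda_i}$, and $\partial\Lambda_i=\Lambda_i^\infty$. Restricting a morphism $\overline{\Omega}_k\to\Lambda$ to the sub-$k$-graphs $\{(p,q)\in\overline{\Omega}_k: p\ge-n\}$, and translating into $\Omega_{k,\infty}$, identifies $\X_\Lambda$ with the inverse limit of the system $(\partial\Lambda,\sigma_\Lambda^m)_{m\in\N^k}$, that is, with the natural extension of the one-sided boundary-path dynamics $(\partial\Lambda,\sigma_\Lambda)$, with $\tsh_\Lambda$ the resulting $\Z^k$-action; under this identification a homeomorphism $h$ as in (6) is exactly a conjugacy of natural extensions. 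So the plan is to show that (6) is equivalent to (1) by showing that a cocycle-preserving isomorphism $\G_{\Lambda_1}\times\R\cong\G_{\Lambda_2}\times\R$ (equivalently (1), via the first part of the proof) corresponds, through the natural-extension construction, to a conjugacy $\X_{\Lambda_1}\cong\X_{\Lambda_2}$.

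\emph{Proof of the equivalence of (6) with the rest, and the main obstacle.} For $(6)\Rightarrow(1)$ I would first note that, by compactness and total disconnectedness of $\X_{\Lambda_1}$, any $h$ as in (6) is a ``sliding block code'': there is a fixed window size $N\in\N^k$ such that $h(x)(p,q)$ depends only on the restriction of $x$ to a translate of $\{(a,b)\in\overline{\Omega}_k:-N\le a\le b\le N\}$, and similarly for $h^{-1}$. Passing to the inverse-limit pictures, this window datum yields compatible homeomorphisms between the ``depth $\ge N$'' parts of $\partial(S\Lambda_1)$ and $\partial(S\Lambda_2)$, together with the bookkeeping of window shifts; the tails of the $S\Lambda_i$ provide exactly the room needed to record this shift, which is why the stabilisation appears. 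Checking continuity and the cocycle identity~\eqref{eqn:cocycle1}, the grading identity~\eqref{eq:functor1}, and the period-preserving identity~\eqref{eqn:per1} for the resulting family (and their $h^{-1}$-counterparts~\eqref{eqn:cocycle2},~\eqref{eq:functor2},~\eqref{eqn:per2}) then gives a $\Z^k$-graded period-preserving continuous orbit equivalence $\partial(S\Lambda_1)\to\partial(S\Lambda_2)$, hence (1) by Corollary~\ref{conj:1}. For $(1)\Rightarrow(6)$ I would run this in reverse: starting from the auxiliary $\Z^k$-graded period-preserving continuous orbit equivalence $h\colon\partial(S\Lambda_1)\to\partial(S\Lambda_2)$ produced in the first part, the cocycles $f_m,g_m$ measure exactly the degree shift needed to match one-sided paths, and passing to natural extensions upgrades $h$ to an honest homeomorphism $\X_{\Lambda_1}\to\X_{\Lambda_2}$; the grading (the $\eta_{\Lambda_i}$-compatibility encoded in~\eqref{eq:functor1}--\eqref{eq:functor2}) is precisely what forces the limit map to intertwine $\tsh_{\Lambda_1}$ and $\tsh_{\Lambda_2}$ rather than merely their orbit equivalences. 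The main obstacle will be this natural-extension dictionary, and in particular making the window-size bookkeeping for $(6)\Rightarrow$(1) both continuous and compatible with all three families of identities; the extra hypotheses on $\Lambda_1,\Lambda_2$ enter exactly here, to guarantee compactness of the $\X_{\Lambda_i}$, the sliding-block-code description, and the coincidence $\partial\Lambda_i=\Lambda_i^\infty$ needed for the inverse-limit identification.
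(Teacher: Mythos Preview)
Your plan for $(1)$--$(5)$ is essentially the paper's: apply the graded-groupoid/algebra dictionary (the paper cites \cite{CRST} and \cite{S2} directly rather than routing through Corollary~\ref{conj:1}, but that corollary is itself derived from those results) and then transport through the identification $\G_{S\Lambda}\cong\G_\Lambda\times\R$ of Lemma~\ref{lem:stablegroupoid}, checking that $c_{\eta_\Lambda}$ corresponds to $\bar c_\Lambda$.

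For $(1)\Longleftrightarrow(6)$ your route genuinely differs from the paper's. The paper never passes through the continuous-orbit-equivalence picture: for $(1)\Rightarrow(6)$ it works directly with a cocycle-preserving groupoid isomorphism $\Phi$, reads off a map $\psi:\Lambda_1^\infty\to\Lambda_2^\infty$ from where units $(x,0)$ go, uses compactness of $\Lambda_1^\infty$ to find a uniform $L$ with $\sigma_{\Ltwo}^{L+m}\circ\psi=\sigma_{\Ltwo}^L\circ\psi\circ\sigma_{\Lone}^m$, sets $\varphi=\sigma_{\Ltwo}^L\circ\psi$, and then extends $\varphi$ to $\X_{\Lone}$ and proves bijectivity by further compactness arguments. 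For $(6)\Rightarrow(1)$ it writes down an \emph{explicit} groupoid isomorphism, using the sliding-block window $L$ to define an equivalence relation on $\Lambda_1^L$, partitioning $\N^k$ over each equivalence class, and checking the formula by hand. Your proposal instead factors everything through a $\Z^k$-graded period-preserving continuous orbit equivalence of $\partial(S\Lambda_i)$ and then invokes Corollary~\ref{conj:1}.

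That alternative is not wrong in spirit, but as stated it has a real gap in $(1)\Rightarrow(6)$. The auxiliary orbit equivalence $h$ lives on $\partial(S\Lambda_i)\cong\partial\Lambda_i\times\N^k$, not on $\partial\Lambda_i$, and it does \emph{not} intertwine shifts; ``passing to natural extensions'' of such an $h$ is not a well-defined operation without first extracting from $h$ and its cocycles a shift-commuting (or at least eventually shift-commuting) map $\partial\Lambda_1\to\partial\Lambda_2$. Doing that extraction is exactly the content of the paper's compactness arguments (the uniform $L$, then the uniform $N$ for injectivity, then the uniform $J$ for surjectivity), and your plan does not indicate how you would recover these from the orbit-equivalence data. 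Conversely, for $(6)\Rightarrow(1)$ your route obliges you to verify the full slate of identities \eqref{eqn:cocycle1}, \eqref{eqn:cocycle2}, \eqref{eqn:per1}, \eqref{eqn:per2}, \eqref{eq:functor1}, \eqref{eq:functor2} for a carefully constructed family on $\partial(S\Lambda_i)$; the paper's explicit groupoid formula sidesteps all of this. So while your detour could in principle be completed, the paper's direct approach is both shorter and supplies precisely the missing analytic step your plan leaves unaddressed.
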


\begin{proof}
For $i=1,2$, let $\kappa_i:\G_{S\Lambda_i}\to\G_{\Lambda_i}\times\R$ be the isomorphism of Lemma~\ref{lem:stablegroupoid}. We have that $\bar{c}_{\Lambda_i}((x,m,y),(p,q))=m=c_{\eta_{\Lambda_i}}(\mu_{p,r(x)}x,m+p-q,\mu_{q,r(y)}y)$, so $\bar{c}_{\Lambda_i}\circ\kappa_i=c_{\eta_{\Lambda_i}}$. It follows that (1) and (2) are equivalent. The equivalence of (2), (4) and (5) follows from \cite[Proposition 2.1 and Theorem 5.7]{S2} and an argument similarly to the one used to prove \cite[Theorem 3.11]{CR2}. The equivalence of (2) and (3) follows from \cite[Corollary~11.3]{CRST}. It remains to show that (1) and (6) are equivalent when $\Lambda_1$ and $\Lambda_2$ are row-finite $k$-graphs that have finitely many vertices and no sinks or sources. Our proof follows the proof of the 1-graph case in \cite[Theorem~5.1]{CR1}. We identify $\partial (S \Lambda)$ with $\partial \Lambda \times \N^k$ via the isomorphism used in the proof of Lemma~\ref{lem:stablegroupoid} given by $\mu_{r(x),n} x \to (x,n)$, where $x \in \partial \Lambda$ and $n \in \N^k$.
	
(1) $\implies$ (6): Suppose $\Phi: \G_{S \Lone} \to \G_{S\Ltwo}$ is an isomorphism satisfying $c_{\eta_{\Ltwo}}(\Phi(\gamma)) = c_{\eta_{\Lone}}(\gamma)$ for $\gamma \in \G_{S\Lone}$. For $x \in \Lambda_1^\infty$, we have $(x,0) \in (S\Lone)^\infty$ and $((x,0), 0, (x,0)) \in \G_{S\Lone}$. Since $\Phi$ is cocycle-preserving, we have $\Phi((x,0),0,(x,0)) = ((y, p),0,(y, p))$ for some uniquely determined $y \in \Lambda_2^\infty$ and $p \in \N^k$. Define $\psi: \Lambda_1^\infty \to \Lambda_2^\infty$ by $\psi(x) := y$. Since $\Phi$ is continuous, the map $(x,0) \mapsto (y, p)$ is continuous, so $\psi$ is also continuous.

Fix $m \in \N^k$. We have $((x,0), m, (\sigma_{\Lone}^m(x),0)) \in \G_{S\Lone}$ for $x \in \Lambda_1^\infty$. Since $\Phi$ is cocycle-preserving, there exist $p,q \in \N^k$ such that $$\Phi((x,0), m, (\sigma_{\Lone}^m(x),0)) = ((\psi(x), p), m+p-q, (\psi(\sigma_{\Lone}^m(x)), q)) \in \G_{S\Lone}.$$ Hence there exists $l \in \N^k$ such that $\sigma_{\Ltwo}^{l+m} (\psi(x)) = \sigma_{\Ltwo}^l(\psi(\sigma_{\Lone}^m(x)))$; let $l(x) \in \N^k$ be such that $l(x) \le l$ for all $l \in \N^k$ satisfying this identity. We check that $l: \Lambda_1^\infty \to \N^k$ is continuous. Suppose $(x^i)_{i\in \N}$ in $\Lambda_1^\infty$ converges to $x$. Then $\Phi((x^i,0), m, (\sigma_{\Lone}^m(x^i),0))\to \Phi((x,0), m, (\sigma_{\Lone}^m(x),0))$ since $\Phi$ is continuous and $\psi(x^i)\to \psi(x)$ and $\psi(\sigma_{\Lone}^m(x^i))\to \psi(\sigma_E(x))$ since $\psi$ is continuous. It follows that there is an $N\in\N$ such that for $i\ge N$, we have that $\sigma_{\Ltwo}^{l(x)+m} (\psi(x^i)) = \sigma_{\Ltwo}^{l(x)}(\psi(\sigma_{\Lone}(x^i)))$ and either $l(x)=0$ or $\sigma_{\Ltwo}^{l(x)} (\psi(x^i)) \ne \sigma_{\Ltwo}^{l(x)-1}(\psi(\sigma_{\Lone}(x^i)))$. Hence $l(x^i)=l(x)$ for $i\ge N$.

Since $\Lambda_1^\infty$ is compact, it follows that there is an $L \in \N^k$ such that $\sigma_{\Ltwo}^{L+m}(\psi(x)) = \sigma_{\Ltwo}^L(\psi(\sigma_{\Lone}^m(x)))$ for all $x \in \Lambda_1^\infty$. Define $\varphi:= \sigma_{\Ltwo}^L \circ \psi: \Lambda_1^\infty \to \Ltwo^\infty$. Then $\varphi$ is continuous and satisfies $\varphi \circ \sigma_{\Lone}^m = \sigma_{\Ltwo}^m \circ \varphi$.

Now, define $\overline \varphi: \X_{\Lone} \to \X_{\Ltwo}$ by $(\overline \varphi(x))(p,\infty)=\varphi(x(p,\infty))$ for $x \in \X_{\Lone}$ and $p \in \Z^k$. Since $\varphi \circ \sigma_{\Lone}^m= \sigma_{\Ltwo}^m \circ \varphi$ for all $m \in \N^k$, it follows that $\overline \varphi$ is well-defined. It is routine to check that $\overline \varphi$ is continuous and that $\overline \varphi \circ \overline \sigma_{\Lone}^m = \overline \sigma_{\Ltwo}^m \circ \overline \varphi$ for all $m \in \N^k$. We will show that $\overline\varphi$ is bijective. It will then follow that $\overline\varphi$ is a conjugacy and thus that $\X_{\Lone}$ and $\X_{\Ltwo}$ are conjugate.

We first show that $\overline\varphi$ is injective. Take $x,x' \in \Lambda_1^\infty$ such that $\varphi(x)=\varphi(x')$, and choose $p,p'\in\mathbb{N}^k$ such that $\Phi((x,0),0,(x,0))=((\psi(x),p),0,(\psi(x),p))$ and $\Phi((x',0),0,(x',0))=((\psi(x'),p'),0,(\psi(x'),p'))$. Since
$\sigma_{\Ltwo}^L(\psi(x))=\sigma_{\Ltwo}^L(\psi(x')),$
it follows that $((\psi(x),p),p-p',(\psi(x'),p'))\in\G_{S\Ltwo}$ and thus that
$$((x,0),0,(x',0))=\Phi^{-1}(((\psi(x),p),p-p',(\psi(x'),p')))\in\G_{S\Lone}.$$
It follows that there is an $n\in\mathbb{N}^k$ such that $\sigma_{\Lone}^n(x)=\sigma_{\Lone}^n(x')$. Let $n((x,x'))$ be such that $n((x,x')) \le n$ for all $n \in \N^k$ such that $\sigma_{\Lone}^n(x)=\sigma_{\Lone}^n(x')$. An argument similar to the one used to prove that $l:\Lambda_1^\infty\to\N$ is continuous, shows that $$n:\{(x,x')\in \Lambda_1^\infty\times \Lambda_1^\infty:\varphi(x)=\varphi(x')\}\to\mathbb{N}^k$$ is continuous. Since $\{(x,x')\in \Lambda_1^\infty\times \Lambda_1^\infty:\varphi(x)=\varphi(x')\}$ is closed in $\Lambda_1^\infty\times \Lambda_1^\infty$ and thus compact, there exists $N\in\mathbb{N}$ such that $\sigma_{\Lone}^N(x)=\sigma_{\Lone}^N(x')$ for all $x,x'\in \Lambda_1^\infty$ satisfying $\varphi(x)=\varphi(x')$. The injectivity of $\overline\varphi$ follows.

Next, we show that $\overline\varphi$ is surjective. Suppose $y\in \Lambda_2^\infty$. Then $$\Phi^{-1}((y,0),0,(y,0))=((x,n),0,(x,n))$$ for some $x\in \Lambda_1^\infty$ and some $n\in\mathbb{N}^k$. Choose $p\in\mathbb{N}^k$ such that $$\Phi((x,0),0,(x,0))=((\psi(x),p),0,(\psi(x),p)).$$ Since $((x,0),-n,(x,n))\in\G_{S\Lone}$ and $\Phi((x,0),-n,(x,n))=((\psi(x),m),m,(y,0))$, it follows that there exists $j\in\mathbb{N}^k$ such that $\sigma_{\Ltwo}^j(\psi(x))=\sigma_{\Ltwo}^j(y)$. An argument similar to the one used in the previous paragraph, then shows that there exists $J\in\mathbb{N}^k$ such that for each $y\in \Lambda_2^\infty$ there is an $x\in \Lambda_1^\infty$ such that $\sigma_{\Lambda_2}^J(\psi(x))=\sigma_{\Lambda_2}^J(y)$. The surjectivity of $\overline\varphi$ follows.

(6) $\implies$ (1): Suppose there is a conjugacy $h: \X_{\Lone} \to \X_{\Ltwo}$. Then there is an $l \in \N$ such that if $x, x' \in \X_{\Lone}$ satisfy $x(0,n) = x'(0,n)$ for all $n \in \N^k$, then $(h(x))(0,n) = (h(x'))(0,n)$ for all $n \in \N^k$, and if $y,y' \in \X_{\Ltwo}$ satisfy $y(0,n) = y'(0,n)$ for all $n \in \N^k$, then $(h^{-1}(y))(0,n) = (h^{-1}(y'))(0,n)$ for all $n \ge l$. It follows that there is a continuous map $\pi: \Lambda_1^\infty \to \Lambda_2^\infty$ such that $(\pi(x(0,\infty)))(0,n)=(h(x))(0,n)$ for $x \in \X_{\Lone}$ and $n\in\mathbb{N}^k$. Then $\pi$ is surjective, $\pi \circ \sigma_{\Lone}^m = \sigma_{\Ltwo}^m \circ \pi$ for all $m \in \N^k$, and if $\pi(x) = \pi(x')$ for $x, x' \in \Lambda_1^\infty$, then $\sigma_{\Lone}^l(x)=\sigma_{\Lone}^l(x')$.

It follows from the continuity of $\pi$ and the compactness of $\Lambda_1^\infty$ that we can choose $L\ge l$ such that if $x(0,L)=x'(0,L)$, then $(\pi(x))(0,l)=(\pi(x'))(0,l)$. Define an equivalence relation $\sim$ on $\Lambda^L$ by $\lambda\sim\mu$ if there are $x\in Z(\lambda)$ and $x'\in Z(\mu)$ such that $\pi(x) = \pi(x')$ (that $\sim$ is transitive follows from the fact that if $\lambda,\mu,\eta\in \Lambda_1^L$, $x,x'\in \Lambda_1^\infty$, and $\pi(\lambda x)=\pi(\mu x)$ and $\pi(\mu x')=\pi(\eta x')$, then $\pi(\lambda x')=\pi(\eta x')$). Then $\pi(x) = \pi(x')$ if and only if $x(0,L)\sim x'(0,L)$ and $\sigma_{\Lone}^l(x)=\sigma_{\Lone}^l(x')$.

For each equivalence class $B\in \Lambda_1^L/\sim$ choose a partition $\{A_\lambda: \lambda \in B \}$ of $\N^k$ and bijections $f_\lambda: A_\lambda \to \N^k$. The map $\psi: (x,n) \mapsto (\pi (x), f^{-1}_{x(0,L)} (n))$ is then a homeomorphism from $(S\Lone)^\infty \to (S\Ltwo)^\infty$. It is routine to check that
$$\Phi:\left((x,n),p,(x',n')\right) \mapsto \left(\psi(x,n),p+n'+f^{-1}_{x(0,L)} (n)-n-f^{-1}_{x'(0,L)} (n'),\psi(x',n')\right)$$ is a groupoid isomorphism from $\G_{S\Lone}$ to $\G_{S\Ltwo}$ satisfying $c_{\eta_{\Ltwo}}(\Phi(\gamma)) = c_{\eta_{\Lone}}(\gamma)$ for $\gamma \in \G_{S\Lone}$. \end{proof}

\section*{Acknowledgements}

JR is grateful for financial support from R. Hazrat (WSU), D. Pask (UoW) and A. Sims (UoW) via their jointly held ARC grant DP150101598, O. Shalit (Technion), the Institute of Mathematics and its Applications at the University of Wollongong, and his father Don. JR is grateful for the hospitality of the Carlsen family and G. Restorff while visiting and attending a workshop at the Faroe Islands.

\end{document}